\newcommand  {\bb}[1]{\mathbb{#1}}
\newcommand  {\bo}[1]{\boldsymbol{#1}}
\renewcommand{\ge}{\geqslant}
\renewcommand{\geq}{\geqslant}
\renewcommand{\le}{\leqslant}
\renewcommand{\leq}{\leqslant}
\newcommand  {\e}{\varepsilon}
\newcommand  {\eps}{\varepsilon}
\newcommand  {\lam}{\lambda}
\renewcommand{\phi}{\varphi}
\renewcommand{\bar}{\overline}
\renewcommand{\hat}{\widehat}
\newcommand  {\E}{\bb{E}}
\newcommand  {\Ex}{\bb{E}}
\newcommand  {\F}{\mathcal{F}}
\newcommand  {\cF}{\mathcal{F}}
\newcommand  {\cG}{\mathcal{G}}
\newcommand  {\cT}{\mathcal{T}}
\renewcommand{\P}{\bb{P}}
\newcommand  {\<}{\langle}
\renewcommand{\>}{\rangle}
\newcommand  {\all}{\forall \ }
\newcommand  {\defeq}{\stackrel{\mathrm{def}}{=}}
\newcommand  {\R}{\bb{R}}
\DeclareMathOperator{\diam}{diam}
\DeclareMathOperator{\Span}{Span}
\newtheorem{thm}{Theorem}[section]
\newtheorem{lem}[thm]{Lemma}
\newtheorem{cor}[thm]{Corollary}
\newtheorem{prop}[thm]{Proposition}
\newtheorem{rmk}[thm]{Remark}
\begin{document}

\title{The shape of multidimensional Brunet--Derrida particle systems}

\author{
Nathana\"{e}l Berestycki
 \footnote{Statistical Laboratory, University of Cambridge.}
 \footnote{Research supported in part by EPSRC grants EP/GO55068/1 and EP/I03372X/1.}
\and Lee Zhuo Zhao $^*$
}

\date{\today}

\maketitle

\begin{abstract}
We introduce particle systems in one or more dimensions in which particles perform branching Brownian motion and the population size is kept constant equal to $N > 1$, through the following selection mechanism: at all times only the $N$ fittest particles survive, while all the other particles are removed. Fitness is measured with respect to some given score function $s:\R^d \to \R$. For some choices of the function $s$, it is proved that the cloud of particles travels at positive speed in some possibly random direction. In the case where $s$ is linear, we show under some assumptions on the initial configuration that the shape of the cloud scales like $\log N$ in the direction parallel to motion but at least $c(\log N)^{3/2}$ in the orthogonal direction for some $c > 0$. We conjecture that the exponent $3/2$ is sharp. This result is equivalent to the following result of independent interest: in one-dimensional systems, the genealogical time is greater than $c(\log N)^3$, thereby contributing a step towards the original predictions of Brunet and Derrida. We discuss several open problems and also explain how our results can be viewed as a rigorous justification of Weismann's arguments for the role of recombination in population genetics.
\end{abstract}


\section{Introduction}

\subsection{Main results}

Let $d \geq 1$ and let $s : \R^d \mapsto \R$ denote a fixed function, which we will refer to as the \emph{score} or \emph{fitness} function in what follows. We consider the following system of $N$ particles in $\bb R^d$, $(X_1(t),\ldots, X_N(t))$  defined informally by the following two rules:

\begin{enumerate}[$\bullet$]
 \item Each particle $X_i$ follows the trajectory of an independent Brownian motion. 
 \item In addition each particle undergoes binary branching at rate 1. After each branching event, we remove from the population the particle $i$ with minimal score, i.e., $\min_{1 \leq i \leq n} s(X_i(t))$.
\end{enumerate}
 
Note in particular that the population size stays constant (equal to $N$) throughout time. Unless otherwise specified, we will always order particles $X_1(t), \ldots, X_N(t)$ by decreasing fitness, i.e., so that 
\begin{equation}
 s(X_1(t)) \geq \ldots \geq s(X_N(t))
\end{equation}
with arbitrary choice in case of a tie.
 

This process can be seen as a multi-dimensional generalisation of the model of branching Brownian motion with selection in $\bb R$ introduced by Brunet, Derrida, Mueller and Munier \cite{BDMM1, BDMM2}. This is the model which arises as a particular case of the above description with $d=1$ and $s(x) = x$.

The motivation for this process was the study of the effect of natural selection on the genealogy of a population. Using nonrigorous methods, Brunet et al. made several striking predictions, which we summarise below. Ordering the particles from right to left (so $X_1(t) \geq \ldots X_N(t)$):

\begin{enumerate}[(i)]
 \item Then for fixed $N$, $\lim_{t\to \infty} (X_1(t) /t) = \lim_{t\to \infty} (X_N(t)/t) \defeq v_N$, almost surely, where $v_N$ is a deterministic constant.
 \item As $N\to \infty$, $v_N = v_\infty - c/(\log N)^2 + o((\log N)^{-2})$, where $v_\infty$ is the speed of the rightmost particle in a free branching Brownian motion (or free branching random walk if time is discrete), and $c$ is an explicit constant.
 \item Finally, the genealogical time scale for this population is $(\log N)^3$. More precisely, the genealogy of an arbitrary sample of the population, resealed by $(\log N)^3$, converges to the Bolthausen--Sznitman coalescent (see for instance \cite{Ensaios} for definitions and more discussion about this problem).
\end{enumerate}

The arguments of Brunet et al. \cite{BDMM1, BDMM2} relied on a nonrigorous analogy with noisy Fisher--Kolmogorov--Petrovskii--Piskounov (FKPP) equation
\begin{equation}\label{E:FKPP}
 \frac{\partial u}{\partial t} = \frac12 \frac{\partial^2 u}{\partial x^2} + u(1 - u),
\end{equation}
and relied strongly on ideas developed earlier by Brunet and Derrida \cite{BD1, BD2, BD3} on the effect of noise on such an equation. For this reason this process is sometimes known as the Brunet--Derrida particle system. From a rigorous point of view,  proofs of  (i) and (ii) can be found in the paper of B\'erard and Gou\'er\'e \cite{BG}, while a rigorous proof of (iii) can be found in \cite{BBS} for a closely related model. However (iii) remains open for the original Brunet--Derrida process, though exciting progress in this direction has been achieved recently by Maillard \cite{Maillard}.

\medskip The main goal of this paper is to study geometric properties of the $d$-dimensional systems and to partly resolve prediction 3 above in the case $d = 1$. We start with our results in $d$ dimensions. Our results are valid in two particular cases:

\begin{enumerate}[({Case} A)]
 \item Euclidean case: $s(x_1, \ldots, x_d) = \sqrt{x_1^2 + \ldots + x_d^2}$.
 \item Linear case: for some vector $\lam \in \R^d$, $s(x) = \< \lam, x\>$. 
\end{enumerate}

See Figure \ref{F:modulus} for two realisations of the process in the Euclidean case (case A). The linear case (case B) in the two dimensional case ($d=2$) is particularly relevant from the point of view of applications, since it is reasonable to assume that for diploid populations, the total fitness of any given individual is a linear combination of the fitnesses of each of her alleles. (In this interpretation we thus view each coordinate as the fitness of the allele on the corresponding chromosome, and so the `spatial' position has nothing to do with the geographical position of that individual in space. See below for further discussion about the biological relevance of our results.)

\begin{figure}
\centering
\includegraphics[width=0.495\textwidth]{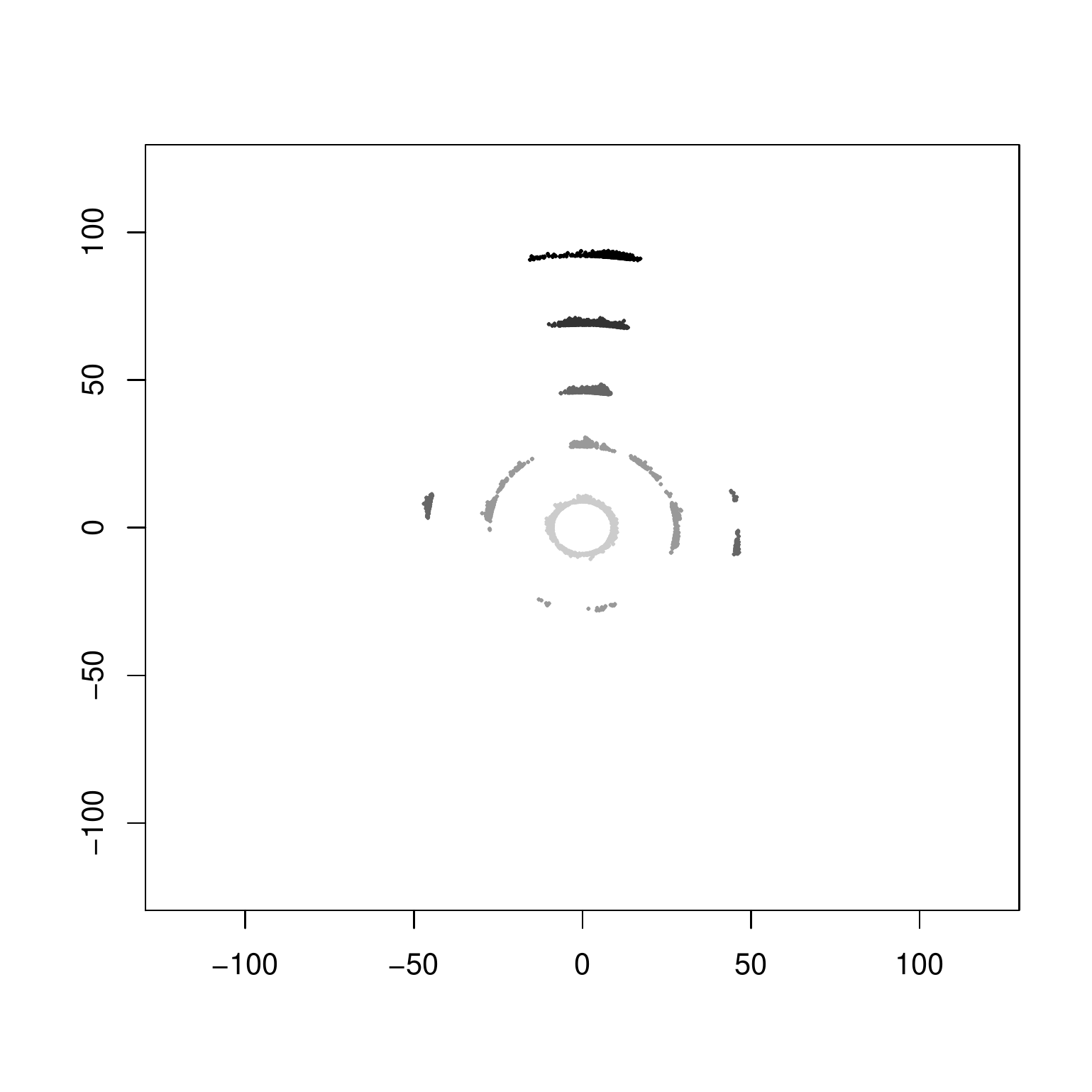} \includegraphics[width=0.495\textwidth]{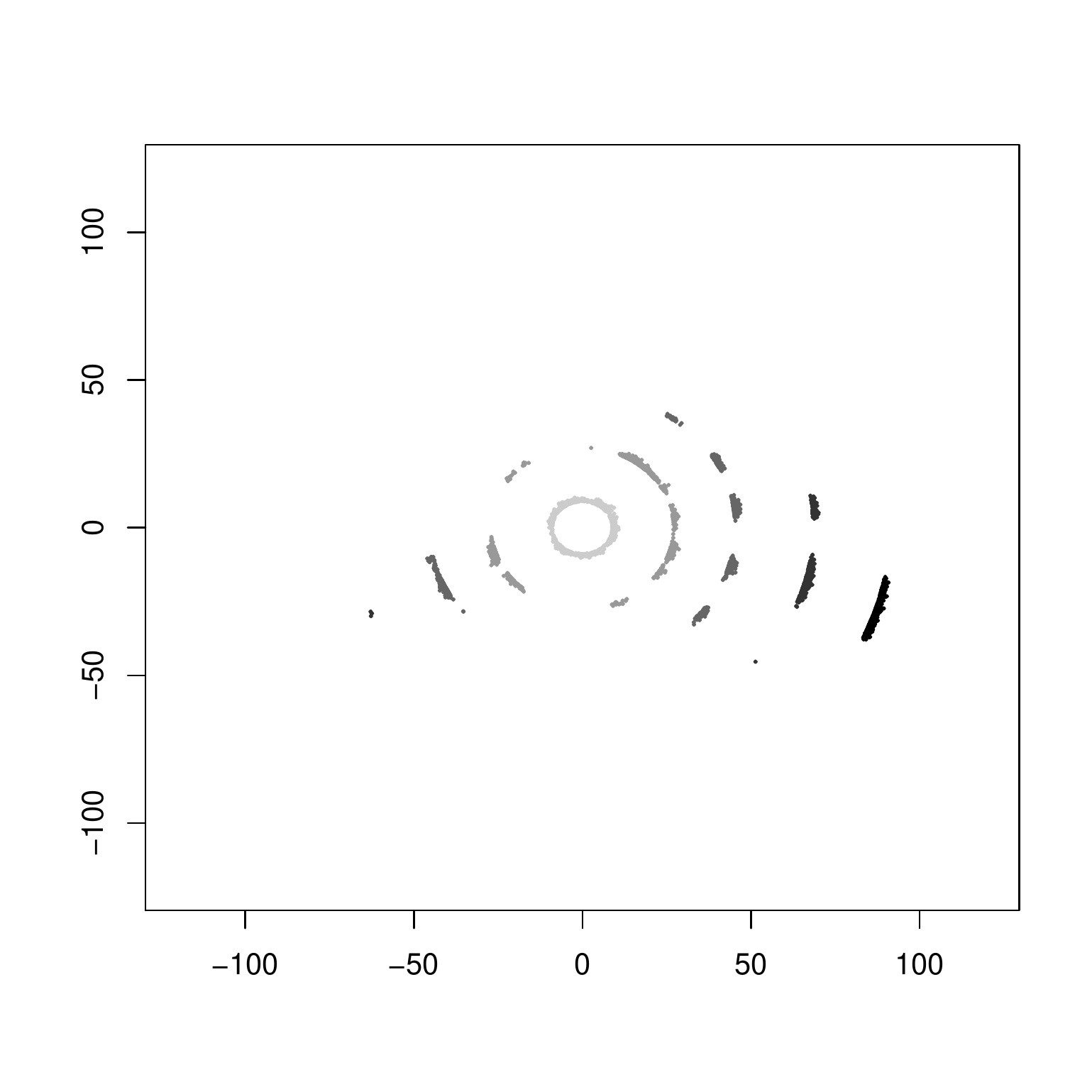} 
\caption{Two realisations of  the particle system with $N = 1000$, $d = 2$, $s(x,y) = x^2 + y^2$ and jump distribution uniform in the unit disk. The particles are plotted after $20$, $60$, $100$, $150$ and $200$ generations with decreasing brightness.}
\label{F:modulus}
\end{figure}

Simulations suggest that after an initial phase where the particles live in fragmented clusters on a circle of a given radius (which increases at linear speed), particles eventually aggregate in one clump, which travels at that speed in a random direction. A similar phenomenon is observed in simulations for the linear case (case B). Our first result makes this observation rigorous. In order to state it, it is convenient to introduce some notations. If $t > 0$ and $1 \leq n \leq N$, write $X_n(t) = R_n(t) \Theta_n(t)$, where $R_n(t) > 0$ and $\Theta_n(t) \in \bb{S}^{d-1}$ is continuous. Note that for $d \geq 2$, almost surely $X_n(t) \neq 0$ for all $t > 0$ and $1 \leq n \leq N$.

\begin{thm}\label{T:modulus} 
Let $N > 1$ and consider a Brunet--Derrida process in $\R^d$ with $N$ particles, driven by the Euclidean score function $s(x) = \|x\|$ (case A). Then,
\begin{equation}\label{E:clump_mod}
  \max_{1\leq n,m \leq N}  \frac{\| X_n(t) - X_m(t)\|}{t} \to 0,
\end{equation}
as $t \to \infty$ almost surely. 

Moreover,
\begin{equation}\label{E:dir_mod}
 \frac{R_1(t)}t \to v_N, \quad \Theta_1(t) \to \Theta,
\end{equation}
where $v_N > 0$ is a deterministic constant and $\Theta$ is 
distributed on $\bb{S}^{d-1}$. Both these convergences hold almost surely.
\end{thm}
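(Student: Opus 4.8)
\medskip
\noindent\emph{Sketch of the strategy.} The plan is to reduce the radial behaviour to a one--dimensional Brunet--Derrida system and to treat the angular behaviour separately. First I would observe that in the Euclidean case the radial coordinates form an autonomous Markov system: since each $X_n$ is a $d$--dimensional Brownian motion, $R_n(t)=\|X_n(t)\|$ is a Bessel$(d)$ process, and because a branching event places the child at the parent's position and removes the particle of smallest norm, the family $(R_1(t),\dots,R_N(t))$ evolves by itself as $N$ Bessel$(d)$ particles with binary branching at rate $1$ and removal of the current minimum at each branching. This is a one--dimensional Brunet--Derrida--type system whose driving motion is a Bessel process rather than a Brownian motion; since a Bessel$(d)$ process at large radius is a Brownian motion with a small nonnegative drift, the arguments of \cite{BG} carry over with minor modifications. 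From them I would deduce that $R_1(t)/t\to v_N$ and $R_N(t)/t\to v_N$ almost surely for a common deterministic constant $v_N$ (subadditivity), that $v_N\le\sqrt2$ (domination of the selected system by free branching Brownian motion), and that $v_N>0$ for $N>1$ (monotone coupling with the driftless one--dimensional system, whose speed is positive and which the nonnegative Bessel drift only accelerates). In particular $R_1(t)-R_N(t)=o(t)$ almost surely, which is half of \eqref{E:clump_mod}.

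The second ingredient is a soft genealogical fact: the number of time--$0$ ancestors of the time--$t$ population is nonincreasing in $t$ and reaches $1$ almost surely, so the most recent common ancestor of the population at time $t$ lives at a time $t-\tau(t)$ with $t-\tau(t)\to\infty$ almost surely. Since the population at any later time descends from this ancestor, these ancestors are consistent and determine a single infinite ancestral lineage $Y(s)$, $s\ge0$ (the spine), which performs a $d$--dimensional Brownian motion; and since $Y(s)$ is one of the $N$ live particles we have $R_N(s)\le\|Y(s)\|\le R_1(s)$, hence $\|Y(s)\|\sim v_N s$ by the first step.

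These two facts give the theorem. Every time--$t$ particle equals $Y(t-\tau(t))$ plus an increment of a Brownian motion run for time at most $\tau(t)\le t$, so almost surely this increment has norm $O(\sqrt{t\log\log t})$ uniformly over the finitely many genealogical lines; together with $\|Y(t-\tau(t))\|\sim v_N t$ this forces the angular diameter of the population down to $O(\sqrt{\log\log t/t})\to0$, so that $\max_{n,m}\|X_n(t)-X_m(t)\|\le (R_1(t)-R_N(t))+R_1(t)\cdot o(1)=o(t)$ almost surely, which is \eqref{E:clump_mod}. For \eqref{E:dir_mod}, $R_1(t)/t\to v_N>0$ is the first step; and writing $X_1(t)=Y(t-\tau(t))+(\text{Brownian increment over time}\le t)$ with $\|Y(t-\tau(t))\|\sim v_N t$ yields $\Theta_1(t)=Y(t-\tau(t))/\|Y(t-\tau(t))\|+o(1)$, so it remains to show that $\Theta_Y(s):=Y(s)/\|Y(s)\|$ converges almost surely. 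By It\^o's formula $\Theta_Y$ solves an equation of the form $d\Theta_Y(s)=\|Y(s)\|^{-1}\,dM_s-\tfrac{d-1}{2}\|Y(s)\|^{-2}\Theta_Y(s)\,ds$ with $M$ a vector Brownian martingale; since $\|Y(s)\|\sim v_N s$ we have $\int^\infty\|Y(s)\|^{-2}\,ds<\infty$ almost surely, whence the martingale part has almost surely finite quadratic variation at infinity and therefore converges, and the drift is absolutely convergent. Thus $\Theta_Y(s)$ converges almost surely to a random $\Theta\in\mathbb S^{d-1}$, and since $t-\tau(t)\to\infty$ we get $\Theta_1(t)\to\Theta$. (When the initial configuration is rotationally invariant, $\Theta$ is uniform on $\mathbb S^{d-1}$ by symmetry.)

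I expect the real work to lie in the first paragraph: transferring the one--dimensional Brunet--Derrida estimates of \cite{BG} --- existence, positivity and the upper bound for the speed --- to the Bessel--driven radial system and checking that the vanishing Bessel drift does not disturb them. The genealogical step and the It\^o computation are comparatively soft, and the whole argument never needs to locate the $(\log N)^3$ genealogical time scale, only that the genealogy eventually coalesces.
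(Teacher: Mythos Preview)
Your overall strategy---reduce the radii to a one-dimensional Brunet--Derrida system via a Bessel/Brownian coupling, control the angular diameter through the genealogy, and prove angular convergence along the spine---is exactly the paper's. But there is a real error in the spine step. You assert that the spine $Y$ ``performs a $d$-dimensional Brownian motion'' and then apply It\^o to $Y/\|Y\|$. This is false: the spine is the unique surviving infinite ray, and which ray survives is biased by selection toward large norm, so $\|Y(s)\|$ is \emph{not} a Bessel process and $Y$ is not a Brownian motion with respect to any filtration to which it is adapted (indeed $Y$ is not even adapted to the natural filtration of the particle system). Your It\^o decomposition with $M$ a Brownian martingale is therefore unjustified, and the finite-quadratic-variation argument collapses. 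The paper repairs this via the skew-product decomposition inside the ambient free branching Brownian motion: along every ray the angular part is a spherical Brownian motion time-changed by $\int_0^t R^{-2}\,ds$, and this angular motion is \emph{independent} of all the radial parts; since selection---and hence the identity of the spine---is measurable with respect to the radial parts alone, the angular part of the spine remains, conditionally on $\mathcal F^R_\infty$, a time-changed spherical Brownian motion, and the clock converges because $R_*(s)\sim v_N s$. (A second proof in the paper bypasses stochastic calculus entirely, using a reflection symmetry to show that $\theta_*$ is a martingale for the enlarged filtration $\sigma(\mathcal F^R_\infty\cup\mathcal F^\Theta_t)$ and then bounding it in $L^2$.)

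There is a secondary gap in the angular-diameter step. Your bound $O(\sqrt{t\log\log t})$ on the displacement from the MRCA implicitly invokes the LIL for fixed Brownian motions, but the $N$ ancestral lines at time $t$ change with $t$ as the genealogy evolves, so the LIL does not apply directly; and your soft genealogical input yields only $t-\tau(t)\to\infty$, not $\tau(t)=o(t)$, which is what the argument would need. The paper instead proves a uniform quantitative bound: $\tau(t)$ is stochastically dominated by a fixed geometric random variable for all large $t$ (by exhibiting an event of uniformly positive probability on each unit interval that forces coalescence), whence $\tau(t)=O(\log t)$ a.s.\ by Borel--Cantelli, and then controls the displacement over this short window by a free branching Brownian motion maximum estimate.
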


\begin{rmk}
In the above theorem, \eqref{E:clump_mod} says that the particles eventually aggregate in one clump. On the other hand \eqref{E:dir_mod} says that the clump travels at linear speed $v_N$, in a randomly chosen direction $\Theta$. We will discuss below more precisely the diameter of the cloud of particles, which (for a fixed $N$, as $t \to \infty)$ stays of order one.
\end{rmk}

\begin{rmk}
This theorem is actually true for a more general class of Brunet--Derrida systems than the ones discussed in this introduction and, indeed, in much of the paper. See Remark \ref{R:gen} for a discussion of the class of processes to which our proofs apply.
\end{rmk}

We are also able to obtain a lower bound for the correct genealogical time for the one-dimensional process up to some mild conditions on the initial position of the particles.

A similar result holds in the linear case:

\begin{thm}\label{T:linear} 
Let $N > 1$ and consider a Brunet--Derrida process in $\R^d$ with $N$ particles, driven by the linear score function $s(x) = \< \lam, x\>$ for some $\lam \in \bb{S}^{d-1}$ (case B). Then,
\begin{equation}\label{E:clump_lin}
 \max_{1\leq n,m \leq N}  \frac{\| X_n(t) - X_m(t)\| }{t} \to 0, 
\end{equation}
as $t \to \infty$ almost surely. 

Moreover,
\begin{equation}\label{E:dir_lin}
 \frac{X_1(t)}t  \to \lam v_N,
\end{equation}
almost surely, where $v_N > 0$ is a deterministic constant.
\end{thm}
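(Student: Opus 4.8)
\medskip

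\noindent\emph{Proof plan.} The idea is that a linear score function only sees the one-dimensional projection of the configuration onto $\lam$, so that this projection is itself a one-dimensional Brunet--Derrida system (to which the results quoted above apply), while the $d-1$ orthogonal coordinates are never used by the selection mechanism and therefore simply diffuse.

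Decompose $\mathbb R^d = \mathbb R\lam \oplus \lam^\perp$ and write $X_n(t) = Y_n(t)\lam + Z_n(t)$, where $Y_n(t) = \<\lam, X_n(t)\> \in \mathbb R$ and $Z_n(t) \in \lam^\perp$. The projection of a $d$-dimensional Brownian motion onto the unit vector $\lam$ is a standard one-dimensional Brownian motion, branching is unaffected by the projection, and the particle removed after each branching event is the one of minimal score, hence the one of minimal $Y$-coordinate; therefore $(Y_1(t), \dots, Y_N(t))$ is exactly a one-dimensional Brunet--Derrida system with $N$ particles. Applying the results of B\'erard and Gou\'er\'e \cite{BG} (which establish (i) and (ii) above) to this one-dimensional system gives $Y_1(t)/t \to v_N$ and $Y_N(t)/t \to v_N$ almost surely, with $v_N>0$ deterministic. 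Since $Y_N(t) \le Y_n(t) \le Y_1(t)$ for every $n$, this already yields $\max_{n,m}|\<\lam, X_n(t)-X_m(t)\>|/t = (Y_1(t)-Y_N(t))/t \to 0$ and the $\lam$-component of \eqref{E:dir_lin}; it remains to handle the orthogonal coordinates $Z_n$.

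Here the crucial structural fact is that, conditionally on the genealogical data (the branching and removal times together with the identities of the particles involved) and on the one-dimensional process $(Y_n)$, the orthogonal parts $(Z_n(t))_n$ evolve as $(d-1)$-dimensional Brownian motions run along the genealogical tree, independently of the $Y$-data: in an orthonormal basis adapted to $\mathbb R\lam \oplus \lam^\perp$ the coordinates of each driving Brownian motion are independent, the branching clocks are independent of everything else, and the selection rule is measurable with respect to $(Y_n)$. In particular, along any ancestral lineage of time-length $\tau$ the accumulated orthogonal displacement is an $\mathcal N(0,\tau I_{d-1})$ vector, with no jump at branching events since a child is born at its parent's location. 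I would then bound the total number of individuals ever alive during $[0,T]$ by $N$ plus the number of branching events, the latter being $\mathrm{Poisson}(NT)$ and hence at most $2NT$ outside an event of probability $e^{-cNT}$; a union bound over these $O(NT)$ lineages using the Gaussian tail of $\sup_{s\le T}\|B_s\|$ for a $(d-1)$-dimensional Brownian motion, followed by Borel--Cantelli along $T = 2^k$, gives $\sup_{s\le t}\max_{1\le n\le N}\|Z_n(s)\| = O(\sqrt{t\log t})$ almost surely (the finite initial configuration contributing only an $O(1)$ term). Consequently $\max_n\|Z_n(t)\|/t \to 0$, which combined with the previous paragraph gives both $\max_{n,m}\|X_n(t)-X_m(t)\|/t \le \big(|Y_1(t)-Y_N(t)| + 2\max_n\|Z_n(t)\|\big)/t \to 0$, i.e.\ \eqref{E:clump_lin}, and $X_1(t)/t = (Y_1(t)/t)\lam + Z_1(t)/t \to v_N\lam$, i.e.\ \eqref{E:dir_lin}.

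The reduction to one dimension is the conceptual core and is essentially immediate; the genuinely technical point is the last step, namely upgrading the $O(\sqrt t)$-size fluctuations of the orthogonal coordinates to an \emph{almost sure} sublinear bound that is \emph{uniform} over the $N$ particles and over all large times despite the fact that particle identities keep changing. This is exactly what the union bound over genealogical lineages above is designed to do, the key input being that the selection mechanism is blind to the orthogonal directions, so that each lineage carries an honest Brownian increment.
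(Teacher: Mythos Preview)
Your proposal is correct and follows the same decomposition as the paper: project onto $\Span(\lam)$ to obtain a one-dimensional Brunet--Derrida system (handled via the B\'erard--Gou\'er\'e results), and observe that the selection rule is measurable with respect to the $\lam$-coordinate alone, so that ancestral lineages projected onto the orthogonal hyperplane $H$ are genuine $(d-1)$-dimensional Brownian motions.

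The only difference is in how the orthogonal part is finished off. The paper simply records that for each surviving particle $X_n(t)$ the ancestral path $(p_H(Y_n(s)))_{s\le t}$ is a Brownian motion and concludes $\|p_H(X_n(t))\|/t\to 0$ almost surely from this; it does not spell out how to go from ``each ancestral lineage at each fixed $t$ is a BM'' to an almost-sure statement uniform over $t$ and over the $N$ particles whose genealogical identities keep changing --- precisely the issue you flag. Your union bound over all lineages ever alive in $[0,T]$ (at most $N$ plus a $\mathrm{Poisson}(NT)$ number of them), combined with Gaussian tails and Borel--Cantelli along dyadic times, makes this step explicit and in fact yields the quantitative refinement $\sup_{s\le t}\max_n\|Z_n(s)\|=O(\sqrt{t\log t})$. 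So: same structural idea, and your version is a more careful write-up of the one step the paper treats tersely.
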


\begin{rmk}
In particular, in this case, the direction of the cloud of particles is deterministic and is simply $\lam$.
\end{rmk}

\begin{rmk}
It is not hard to see that the $v_N$ appearing in Theorem \ref{T:modulus} and \ref{T:linear} are both equal to the asymptotic speed of a one-dimensional (standard) Brunet--Derrida system. Hence, adapting a result of B\'erard and Gou\'er\'e \cite{BG} for branching Brownian motion, we get
\begin{equation}
v_N = \sqrt{2} - \frac{\pi^2}{\sqrt{2}(\log N)^2} + o((\log N)^{-2}),
\end{equation}
as $N \to \infty$.
\end{rmk}

Our next results concern the dimensions of the cloud of particles. The simulations above suggest, somewhat counterintuitively, that the cloud of particles is {more elongated} in the direction orthogonal to the fitness gradient (and the limiting direction of the cloud). This is corroborated by a close-up view of the cloud of particles (see Figure \ref{F:shape}). 

\begin{figure}
\centering
\includegraphics[scale=.4]{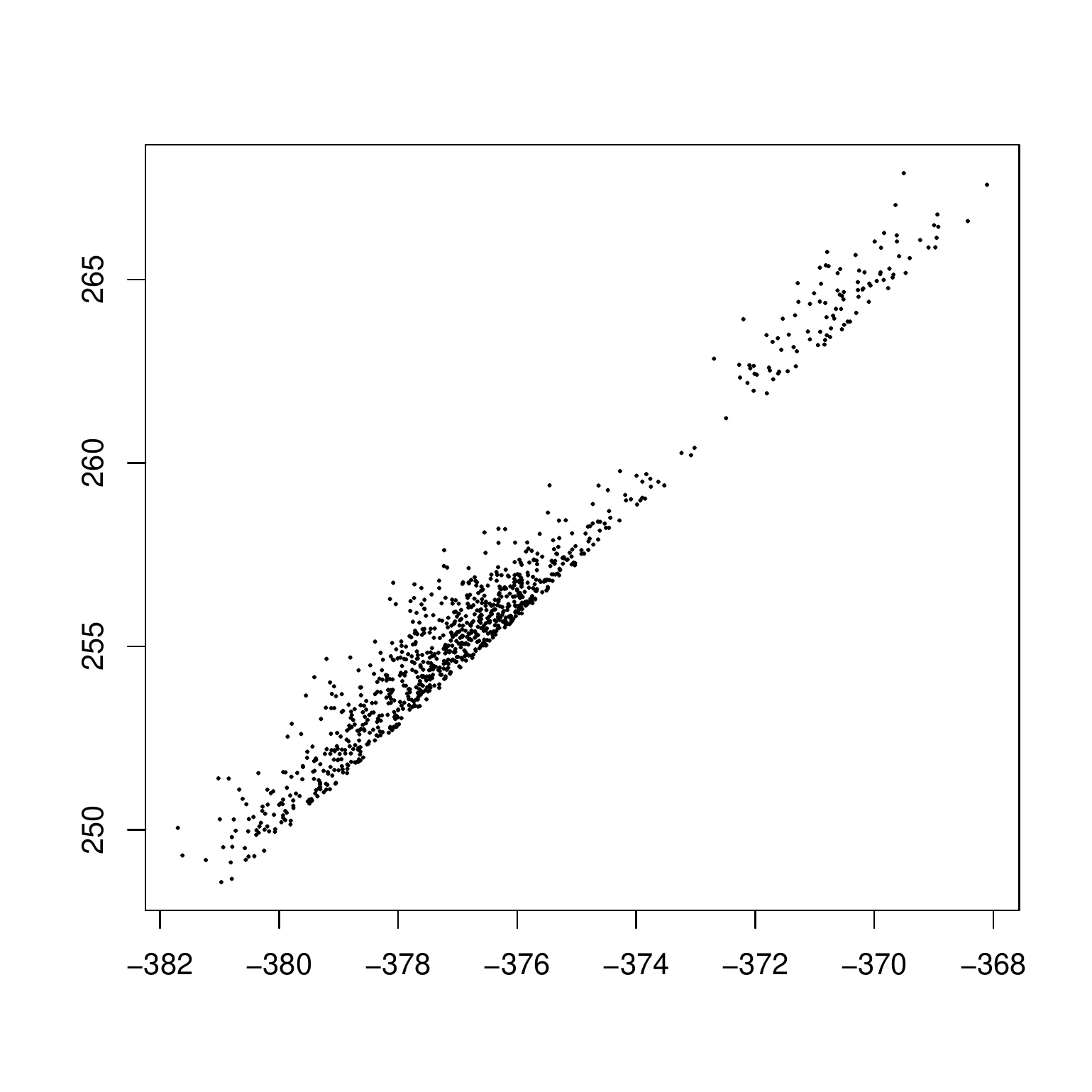}  
\includegraphics[scale=.4]{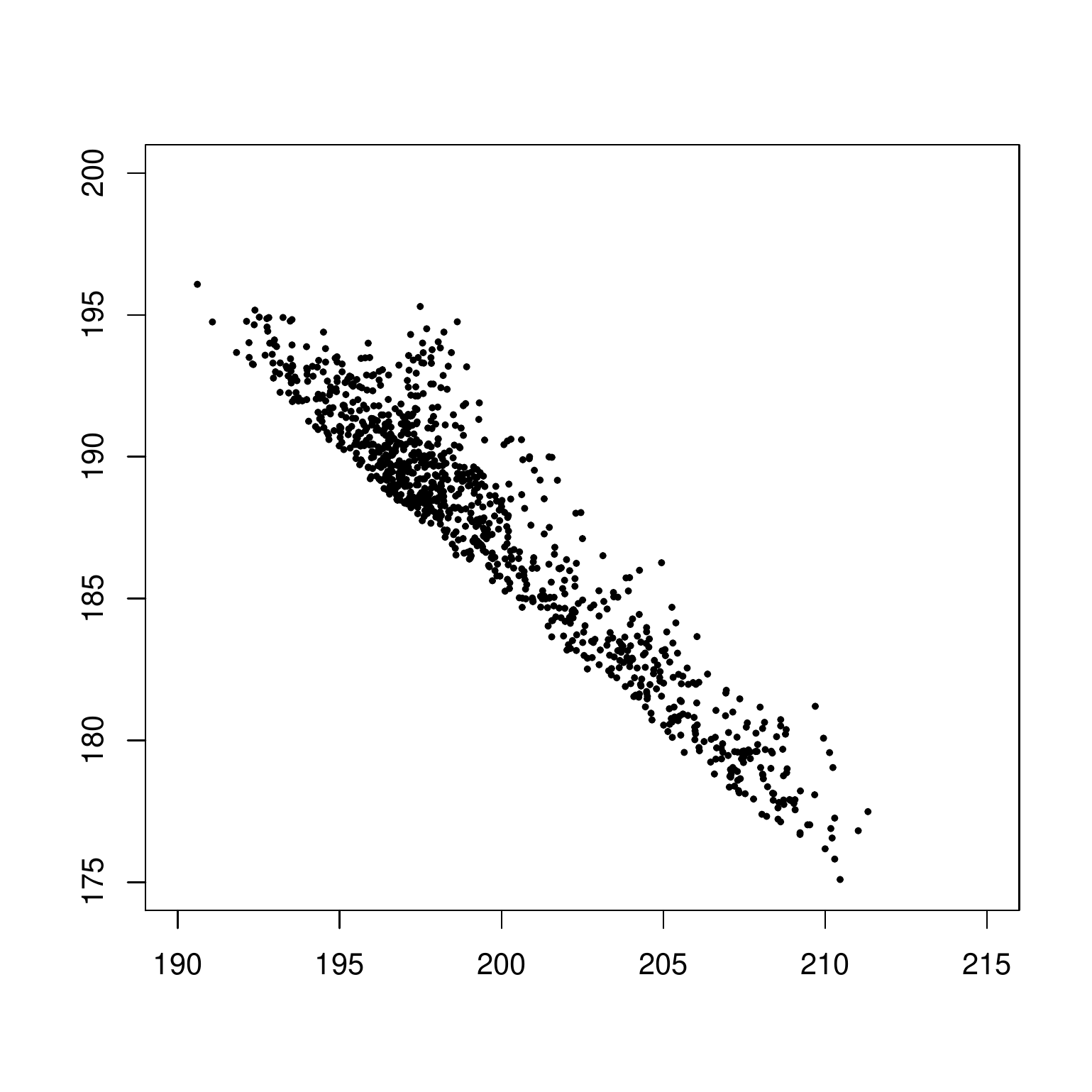} 
\caption{Close-up on the cloud of particles with $N=1000$. (a), $s(x,y) = \|x\|$, $t=1000$. (b)  $s(x,y) = x+y$, $t = 200$.}
\label{F:shape}
\end{figure}

We are able to establish this phenomenon under some reasonable assumptions on the initial condition, in case B. Fix $\lam \in \bb{S}^{d-1}$ and let $\lam^\perp$ be an arbitrary unit vector such that $\< \lam, \lam^\perp\> = 0$. Define
\[
 \diam_t = \max_{1 \leq m,n \leq N} \left| \< X_n(t) - X_m(t) , \lam \> \right| 
\]
and
\[
 \diam^\perp_t =  \max_{1 \leq m,n \leq N} \left| \< X_n(t) - X_m(t) , \lam^\perp \> \right|
\]
Fix $\lam \in \bb{S}^{d-1}$ and for all $x \in \R^d$ let $\hat x =  \< \lam, x \>$.

We introduce an assumption on the initial condition which will be used in several results below.
Let $X_1(t), \ldots, X_N(t)$ denotes the particles of a Brunet--Derrida system driven by the linear score function $s(x) = \hat x$. Let $\hat X_n(t) = \< X_n(t), \lam \>$,
and label the particles by decreasing fitness $\hat X_1(t) \geq \ldots \geq \hat X_N(t)$.
Suppose that initially the system has a particle at $X_i(0) = x$ and that for some $\delta <1$, 
\begin{equation}\label{E:init_cond}
 \sum_{n=1}^N e^{\sqrt{2} (\hat X_n(0) - \hat x )}\leq N^\delta.
\end{equation}

\begin{thm}\label{T:sausage}
Assume \eqref{E:init_cond}. Then there exists $c_\delta > 0$ (depending only on $\delta$) such that for $t = c_\delta(\log N)^3$, there exists $a > 0$ such that
\begin{equation}\label{E:shape}
 \liminf_{\eta \to 0} \liminf_{N \to \infty} \P \left( \diam_t \leq a\log N, \ \diam_t^\perp \geq \eta (\log N)^{3/2} \right) = 1.
\end{equation}
\end{thm}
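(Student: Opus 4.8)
The plan is to decouple the motion parallel to $\lam$ from the motion orthogonal to it, exploiting the fact that under the linear score function only the $\hat\cdot$-coordinate matters for selection. Write $X_n(t) = \hat X_n(t)\lam + Y_n(t)$, where $Y_n(t) = X_n(t) - \hat X_n(t)\lam$ is the projection onto $\lam^\perp$ (in $d=2$; in general onto the orthogonal complement, but the one-dimensional marginal in the direction $\lam^\perp$ is what we need). The key structural observation is that $(\hat X_1(t),\dots,\hat X_N(t))$ is itself a one-dimensional Brunet--Derrida system, and that \emph{conditionally} on the entire genealogy and on the one-dimensional trajectories $\hat X_n$, the orthogonal displacements $Y_n(t)$ are Gaussian: along each branch they perform independent Brownian motions in $\R^{d-1}$, and at branching events the orthogonal coordinate is simply inherited. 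So $Y_n(t)$ is, conditionally on the genealogy, a centered Gaussian vector whose covariance structure is governed by the coalescent times of the sampled particles. This is where the genealogical result alluded to in the abstract (the genealogical time is at least $c(\log N)^3$) enters: for $t = c_\delta(\log N)^3$ we can find two particles alive at time $t$ whose most recent common ancestor is at time $O((\log N)^3)$ in the past, and whose lineages have therefore accumulated independent orthogonal Brownian increments over a time span of order $(\log N)^3$.

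First I would establish, via Theorem~\ref{T:linear} together with a quantitative version of the speed estimate and the initial condition \eqref{E:init_cond}, that for $t = c_\delta(\log N)^3$ the parallel diameter $\diam_t$ is $O(\log N)$ with probability tending to $1$ as $N\to\infty$; this is essentially the known ``width is $\log N$'' phenomenon for the one-dimensional system (the front has width $O(\log N)$ once the system has relaxed, and \eqref{E:init_cond} guarantees the relaxation has occurred by time $c_\delta(\log N)^3$), so the $\diam_t \leq a\log N$ half of \eqref{E:shape} follows from one-dimensional input. Second, I would invoke the lower bound on the genealogical time scale to produce, with probability bounded away from zero (and in fact tending to $1$), a pair of indices $m,n$ with coalescence time $T_{mn} \geq \gamma(\log N)^3$ for some $\gamma = \gamma(c_\delta) > 0$. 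Third, conditionally on this event and on the genealogy, $\<X_n(t) - X_m(t), \lam^\perp\>$ is a centered Gaussian with variance $2T_{mn} \geq 2\gamma(\log N)^3$, hence is of order $(\log N)^{3/2}$; a simple anti-concentration (Gaussian lower tail) bound gives $\P(|\<X_n(t)-X_m(t),\lam^\perp\>| \geq \eta(\log N)^{3/2}) \geq 1 - c\eta$ for small $\eta$, uniformly in $N$. Taking $\liminf_{\eta\to 0}$ after $\liminf_{N\to\infty}$ kills the $c\eta$ error and yields the second half of \eqref{E:shape}. Combining the two halves by a union bound finishes the argument.

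The main obstacle is the second step: extracting a pair of particles with coalescence time of order $(\log N)^3$ at a \emph{fixed} deterministic time $t = c_\delta(\log N)^3$, rather than in an asymptotic or annealed sense. One needs the genealogical lower bound to be robust enough to assert not just that \emph{some} sample has deep coalescences eventually, but that at this particular time horizon a positive (indeed asymptotically full) fraction of pairs, or at least one identifiable pair, has not yet coalesced over a window of length $\asymp(\log N)^3$. This requires controlling the coalescent structure of the one-dimensional Brunet--Derrida front uniformly over the relevant time scale, presumably by tracking the ancestral lineages of extremal particles and showing they remain distinct for a long time because the front ``mixing'' is slow — this is precisely the content of the $(\log N)^3$ genealogical lower bound that the paper proves, so the work here is in packaging that estimate in the conditional-Gaussian framework above. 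A secondary technical point is ensuring the conditioning used for the Gaussian anti-concentration (conditioning on the genealogy and on the parallel trajectories) is legitimate, i.e. that the orthogonal motion is genuinely conditionally independent of the selection mechanism — which it is, since selection depends only on $\hat X$, but this independence must be stated carefully as the driving Brownian motions in the $\lam$ and $\lam^\perp$ directions are independent.
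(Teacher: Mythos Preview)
Your proposal is correct and matches the paper's approach closely: decouple the parallel and orthogonal motions, use the one-dimensional diameter bound (Lemma~\ref{L:diameter}) for $\diam_t$, and for $\diam_t^\perp$ use the genealogical lower bound together with the fact that the orthogonal projections of surviving lineages are independent Brownian motions. The ``main obstacle'' you flag is resolved in the paper exactly along the lines you anticipate, by applying Theorem~\ref{T:MRCA} to the two leading particles (after waiting a short time $u=\log\log N$ if necessary so that the leader has branched), which guarantees two distinct lineages surviving to time $T$ and hence a pair with coalescence time at least $T-u\asymp(\log N)^3$.
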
 


In fact it can be shown that under the same initial condition, the order of magnitude of $\diam_t $ really is $\log N$, in the sense that we also have $\diam_t \geq a' \log N$ with probability tending to 1 as $N \to \infty$, for some constant $a'<a$. The phenomenon has important consequences in population genetics which are discussed below.

\medskip We now make a series of comments on the meaning of the initial condition \eqref{E:init_cond}.

\begin{rmk}\label{R:init_cond_xi}
Intuitively, the condition \eqref{E:init_cond} says that, after projecting onto $\Span(\lam)$, only polynomially many particles lie with logarithmic distance of the maximal particle. More precisely, \eqref{E:init_cond} holds as soon as there exists $c > 0$ and $\xi < 1$ such that at most $N^\xi$ particles lie in the interval $[X_1(0) - c \log N, X_1(0)]$.
\end{rmk}

\begin{rmk}
An example of an initial condition which satisfies \eqref{E:init_cond} with high probability is as follows: sample $X_1, \ldots, X_N$ in $\R^d$ independently according to a fixed distribution such that if $\hat X = \< X, \lam\>$, then for all $x>0$,  
\begin{equation}\label{E:tail_decay}
 c_1 e^{-\alpha_1 x} \leq \P(\hat X > x) \leq c_2 e^{-\alpha_2 x}
\end{equation}
for some constant $c_1, c_2$ and $\alpha_1, \alpha_2$.
\end{rmk}

\begin{rmk}
We believe, but have been unable to prove, that if the initial condition is as in the above remark then \eqref{E:init_cond} will in fact be satisfied at arbitrary large times. Indeed, comparing with results in \cite{BBS}, we expect indeed that, at ``equilibrium" (see Section \ref{S:discussion} for definition), $X_1(0) = (1/\sqrt{2}) \log N$ and 
\[
Y_N = \sum_{n} e^{\sqrt{2} X_n(0)} \approx  N L \int_0^L e^{\sqrt{2} x} \cdot e^{-\sqrt{2} x} \sin(\frac{\pi x}L) dx  \sim c NL^2,
\]
where $L = (1/\sqrt{2}) (\log N + 3 \log \log N) $. Hence the right-hand side of \eqref{E:init_cond} should be of order $L^2$ and thus \eqref{E:init_cond} should be satisfied at equilibrium. Thus condition \eqref{E:init_cond} can be thought of as a condition specifying that the population is in a ``metastable" state, as in \cite{BBS}.
\end{rmk}

As we will see, the result in Theorem \ref{T:sausage} is closely related to estimates about the genealogical timescale (or, more precisely, the time of the most recent common ancestor) in the population. In fact, Theorem \ref{T:sausage} can be rephrased as follows:

\begin{thm}\label{T:MRCA} 
Let $N > 1$ and consider a Brunet--Derrida system with $N$ particles driven by the linear score function $s(x) = \hat x = \< x, \lam \>$. Assume that the initial condition satisfies \eqref{E:init_cond}. Then there exists $c_\delta > 0$ (depending only on $\delta$) such that any particle with fitness greater than $x$ at time $0$ has descendants alive at time $c_\delta(\log N)^3$ with probability tending to 1 as $N\to \infty$.
\end{thm}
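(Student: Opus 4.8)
The plan is to reduce to the one-dimensional system driven by $s(x) = \hat x$ — which is licit because the selection rule only ever compares the scalars $\hat X_n(t)$, so the projected process $(\hat X_1(t), \ldots, \hat X_N(t))$ is itself an autonomous Brunet--Derrida system in $\R$ — and then to track the descendants of the tagged particle over a time window of length $t = c_\delta(\log N)^3$. Write $\hat x$ for the fitness at time $0$ of the tagged particle; by \eqref{E:init_cond} we may assume without loss of generality (after relabelling and possibly lowering $\delta$) that $\hat x = \hat X_1(0)$ is the fittest particle, since a particle with larger fitness only has an easier time surviving. The quantity $Y_N(t) = \sum_{n=1}^N e^{\sqrt 2\, (\hat X_n(t) - m(t))}$, for a suitable centering $m(t)$, is the natural Lyapunov-type functional: its expectation is essentially conserved under branching Brownian motion with the critical exponent $\sqrt 2$, and the selection rule (removing the minimal particle) can only decrease it. The assumption \eqref{E:init_cond} says precisely that $Y_N(0) \le N^\delta$ in the frame centered at the tagged particle.

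First I would set up a \emph{many-to-one / first-moment} computation for the contribution to $Y_N$ coming only from descendants of the tagged particle: run branching Brownian motion from $\hat x$, kill a trajectory as soon as its fitness drops below the running minimal-survivor threshold, and show that the expected killed additive martingale started from a single particle stays bounded below by a positive constant (times $e^{\sqrt 2(\hat x - m(t))}$) over the whole interval $[0, c_\delta(\log N)^3]$. The classical picture here (going back to Brunet--Derrida and made rigorous in \cite{BBS, BG}) is that the effective domain for survival is a moving interval of width $L \sim (\pi/\sqrt2)\log N$; a particle that starts at the top has, over a time of order $L^3$, an $\Theta(1)$ chance that its killed additive martingale has not been absorbed, because $L^3$ is exactly the relaxation time of Brownian motion in an interval of width $L$ with the relevant tilting. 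Second, I would combine this with \eqref{E:init_cond}: since the \emph{total} mass $Y_N$ is at most $N^\delta$ with $\delta < 1$ and is non-increasing, while the tagged particle's descendant mass — were it to go extinct — would have to have been ``displaced'' by other families, a second-moment or pigeonhole argument bounds the probability that the tagged family is the one that gets squeezed out. Concretely, at each branching-and-removal event the removed particle is the globally minimal one; the tagged family is removed entirely only if at some point all of its surviving descendants are simultaneously below all particles of all other families, an event one controls by comparing the tagged family's lowest survivor to the $(1-\delta)$-quantile gap guaranteed by \eqref{E:init_cond}.

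The technical heart — and the step I expect to be the main obstacle — is the lower bound on the survival probability of the killed additive martingale over the $(\log N)^3$ timescale, uniformly as $N \to \infty$. This requires (i) identifying the correct moving barrier $m(t)$ and showing the real system's empirical extremes stay within $O(\log\log N)$ of the idealized linear profile $\sqrt 2\, t$ over the whole window — a crossing/barrier estimate of the type in \cite{BG} that one must import or re-prove in the projected one-dimensional setting — and (ii) a spine/Feynman--Kac estimate showing that, conditionally on the barrier, a single tilted Brownian particle started at the top survives confinement in a slab of width $\sim L$ for time $\sim L^3$ with probability bounded away from $0$; this is where the precise matching of the cubic timescale to the logarithmic width enters, via the principal Dirichlet eigenvalue $\sim \pi^2/(2L^2)$ of the slab and the fact that $L^3 \cdot L^{-2} = L$ is large but the top of the slab is an $\Theta(1/L)$-mass region, so the two effects balance to give $\Theta(1)$. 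Once survival of at least one descendant over $[0, c_\delta(\log N)^3]$ is established with probability $\to 1$ (after boosting the constant-order probability to $1-o(1)$ using the many independent descendants produced in the first $O(\log N)$ units of time together with \eqref{E:init_cond} to rule out their joint extinction), the theorem follows; the equivalence with Theorem \ref{T:sausage} then comes from noting that distinct surviving families at time $t$ must, by the transverse diffusion of the Brownian coordinates orthogonal to $\lam$ run for time $\sim (\log N)^3$, be spread over a $\lam^\perp$-distance of order $(\log N)^{3/2}$.
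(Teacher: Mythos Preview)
Your proposal has the right instincts --- reduce to one dimension, control the killing threshold, and then argue survival of the tagged lineage via BBM with absorption on the $(\log N)^3$ timescale --- but the execution deviates from the paper in a way that creates genuine gaps.

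The paper's argument is much more direct than what you outline. It proceeds in two clean steps. First, Proposition~\ref{P:min_upper} (whose proof is the real technical core) shows that with probability $\to 1$ the minimum satisfies $\hat X_N(t) \le \hat x - \xi\log N + \sqrt 2\, t$ for all $t \le c_\delta(\log N)^3$, where $\xi>0$ is chosen small enough that $\delta + \sqrt 2\,\xi < 1$. Second, on this event the Brunet--Derrida selection is \emph{less} aggressive than killing at a deterministic linear wall $w(t) = \hat x - \tfrac12\xi\log N + \mu' t$ with $\mu' = \sqrt 2 - \xi/(2c_\delta(\log N)^2)$; hence the tagged particle survives in the Brunet--Derrida system provided its BBM descendants survive absorption at $w(t)$. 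That survival probability is exactly $Q_{\mu'}(\tfrac{\xi}{2}\log N)$, and Theorem~1 of \cite{BBS2} gives $Q_{\mu'}(\tfrac{\xi}{2}\log N) \to 1$ once $c_\delta < \xi^3/(2\sqrt 2\,\pi^2)$. No first/second-moment argument, no Lyapunov functional, no boosting.

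Your route has two concrete problems. First, the ``pigeonhole / second-moment'' step is not sound as written: a lower bound on the \emph{expected} killed additive martingale of the tagged family says nothing about survival, and your boosting step (``many independent descendants produced in the first $O(\log N)$ units of time'') fails because those descendants are not independent under selection --- they all compete against the same random threshold $\hat X_N(t)$, so their extinction events are highly positively correlated. Second, your single-particle slab heuristic is quantitatively wrong: a single Brownian particle confined to a slab of width $L$ for time $L^3$ survives with probability $\asymp e^{-\pi^2 L/2}$, not $\Theta(1)$; what actually goes to $1$ is the survival probability of a \emph{branching} Brownian motion with near-critical absorbing drift started $\Theta(\log N)$ above the wall, and that is a nontrivial result (\cite{BBS2}), not a spine calculation. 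The step you flag as ``the main obstacle'' --- controlling the barrier --- is indeed the crux, but it is precisely Proposition~\ref{P:min_upper}, and once you have it the survival step is a one-line citation rather than the elaborate mechanism you sketch.
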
 

By projecting the particle system onto $\Span(\lam)$, we obtain a one-dimensional (standard) Brunet--Derrida system. Thus Theorem \ref{T:MRCA} applies \emph{verbatim} to such systems, which partly confirms a prediction of \cite{BDMM1, BDMM2} (see item (iii) at the start of the introduction). 

The heart of the proof relies on delicate quantitative estimates concerning the displacement of the minimal position in one-dimensional (standard) Brunet--Derrida systems. This is a difficult quantity to study rigorously, as the evolution of the minimum depends on all the particles nearby, which make up all but a negligible fraction of the population. In particular, as a process it is non-Markovian and not continuous, though in the limit $N\to \infty$ it becomes deterministic and continuous. Our result is as follows.

\begin{prop}\label{P:min_upper} Consider a (standard) one-dimensional Brunet--Derrida system with $N$ particles, ordered by decreasing fitness $X_1(t) \geq \ldots \geq X_N(t)$. Assume that the initial satisfies \eqref{E:init_cond}. Let 
\[
 \mu = \sqrt{2 - \frac{2\pi^2}{(\log N)^2}}.
\]
Then there exists $c_\delta > 0$ (depending only on $\delta$) such that as $N \to \infty$,
\begin{equation}\label{E:min_upper}
 \P \left(X_N(t) - x \leq \mu t, \, \all t \leq c_\delta(\log N)^3 \right) \to 1.
\end{equation}
\end{prop}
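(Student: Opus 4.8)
The plan is to control the exponentially weighted sum $Z(t):=\sum_{n=1}^N e^{\sqrt2\,X_n(t)}$ rather than the minimum itself. By translation invariance of the dynamics and of the statement I may assume $x=0$, so that \eqref{E:init_cond} becomes simply $Z(0)\le N^\delta$. Two elementary facts drive the argument. First, since $X_N(t)=\min_n X_n(t)$ and $\sqrt2>0$, each summand is at least $e^{\sqrt2 X_N(t)}$, whence
\[
 N\,e^{\sqrt2\,X_N(t)}\ \le\ Z(t)\qquad\text{for all }t\ge0 .
\]
Second, $e^{-2t}Z(t)$ is, up to the selection losses, the critical additive martingale of branching Brownian motion: I would couple the Brunet--Derrida system with a free branching Brownian motion started from the same $N$ particles and driven by the same branching events and Brownian increments, arranged so that selection only prunes lineages; then the $N$ survivors form, at every time, a subset of the free cloud $\mathcal B(t)$, and therefore $e^{-2t}Z(t)\le W(t):=e^{-2t}\sum_{v\in\mathcal B(t)}e^{\sqrt2\,X_v(t)}$ pathwise, $W$ being a nonnegative martingale with $W(0)=Z(0)$. (A direct generator computation shows moreover that $e^{-2t}Z(t)$ is itself a nonnegative supermartingale, of drift $-N e^{-2t}e^{\sqrt2 X_N(t)}$; but only the domination is needed.)

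Next I would pass from $Z$ to $X_N$ and compare with the line $\mu t$. Taking $\omega_N=\log N\to\infty$, Doob's maximal inequality for the nonnegative martingale $W$ gives $\P(\sup_{t\ge0}e^{-2t}Z(t)\ge N^\delta\omega_N)\le\P(\sup_{t\ge0}W(t)\ge N^\delta\omega_N)\le W(0)/(N^\delta\omega_N)\le 1/\omega_N\to0$, and on the complementary event one has $N e^{\sqrt2 X_N(t)}\le Z(t)<e^{2t}N^\delta\omega_N$ for every $t\ge0$, hence, on taking logarithms,
\[
 X_N(t)\ <\ \sqrt2\,t-\frac{1-\delta}{\sqrt2}\log N+\frac{1}{\sqrt2}\log\omega_N\qquad\text{for all }t\ge0 .
\]
Since $\mu^2=2-2\pi^2/(\log N)^2$ we have $\sqrt2-\mu=\big(2\pi^2/(\log N)^2\big)/(\sqrt2+\mu)\le\sqrt2\,\pi^2/(\log N)^2$, so on this event, for all $t\le c_\delta(\log N)^3$,
\[
 X_N(t)-\mu t\ <\ (\sqrt2-\mu)t-\frac{1-\delta}{\sqrt2}\log N+\frac{\log\omega_N}{\sqrt2}\ \le\ \Big(\sqrt2\,\pi^2 c_\delta-\frac{1-\delta}{\sqrt2}\Big)\log N+\frac{\log\omega_N}{\sqrt2} .
\]
Taking $c_\delta$ small enough that $\sqrt2\,\pi^2 c_\delta<(1-\delta)/\sqrt2$ (for instance $c_\delta=(1-\delta)/(4\pi^2)$) makes the coefficient of $\log N$ a strictly negative constant, so, since $\log\omega_N=o(\log N)$, the right-hand side is negative for all large $N$; hence $X_N(t)<\mu t$ simultaneously for all $t\le c_\delta(\log N)^3$, on an event of probability $\ge1-1/\omega_N\to1$, which is \eqref{E:min_upper}.

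The routine work will be making the coupling and the martingale property of $W$ precise (these are standard facts about exponential martingales of branching Brownian motion). The one place where the bookkeeping truly matters — though it is painless here — is the last display: the input is only a first-moment estimate, and it yields the \emph{sharp} exponent $3$ precisely because the $O(\log N)$ deficit of $\mu t$ below $\sqrt2\,t$, accumulated over a time window of length $(\log N)^3$, is exactly balanced against the $(1-\delta)\log N$ ``head start'' that \eqref{E:init_cond} encodes through the fact that $\sqrt2$ is the critical exponent for $W$; this is what locks together the hypothesis, the value of $\mu$, and the $(\log N)^3$ timescale. By contrast, the matching \emph{lower} bound on the displacement of $X_N$ — the estimate that ultimately forces $\diam_t^\perp$ to grow in Theorem~\ref{T:sausage} — lies considerably deeper and is not part of this proposition.
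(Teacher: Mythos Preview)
Your argument is correct and takes a genuinely different, more economical route than the paper. The paper couples the system with branching Brownian motion killed at two moving walls $\mu t$ and $L+\mu t$ (with $L=(\log N)/\sqrt2$), decomposes the count $W_t$ of particles above $\mu t$ according to whether or not they touch the left wall, bounds each piece by a combination of a lemma of Maillard and a Harris--Harris type supermartingale for the wall-hitting counts, and finally upgrades the resulting fixed-$t$ bound to a uniform one by discretising $[0,T]$ into steps of length $1/\log N$ and controlling the fluctuations of $X_N$ on each subinterval. You bypass all of this by working directly with the critical additive martingale $W(t)=e^{-2t}\sum_v e^{\sqrt2 X_v(t)}$ of the coupled free branching Brownian motion: the pathwise domination $e^{-2t}Z(t)\le W(t)$ plus Doob's maximal inequality gives a bound on $\sup_t Z(t)$ in one stroke, and the trivial inequality $Ne^{\sqrt2 X_N(t)}\le Z(t)$ converts this into $X_N(t)<\mu t$ uniformly for $t\le c_\delta(\log N)^3$. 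Both approaches land on the same threshold $c_\delta<(1-\delta)/(2\pi^2)$; the paper's heavier machinery is closer in spirit to the wall-based survival estimates needed for Theorem~\ref{T:MRCA}, which may explain the choice. One small slip in your closing commentary: it is \emph{this} upper bound on $X_N$, not the matching lower bound of Proposition~\ref{P:min_lower}, that feeds into Theorem~\ref{T:MRCA} and hence into the growth of $\diam_t^\perp$ in Theorem~\ref{T:sausage}.
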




A corresponding lower bound for the progression of the minimal position can be established from an intermediate result of B\'erard and Gou\'er\'e \cite{BG}, with their proof adapted for branching Brownian motion. 

\begin{prop}\label{P:min_lower}
Consider a (standard) one-dimensional Brunet--Derrida system with $N$ particles, ordered by decreasing fitness $X_1(t) \geq \ldots \geq X_N(t)$. For all $\eta > 0$, there exists $c_\eta > 0$ such that for any initial condition as $N \to \infty$,
\begin{equation}\label{E:min_lower}
 \P \left( X_N(t) - X_N(0) \leq \left( \sqrt2 - \frac{(1 + \eta)\pi^2}{\sqrt2 (\log N)^2} \right) t, \, \all t \leq c_\eta (\log N)^3 \right) \to 0.
\end{equation}
\end{prop}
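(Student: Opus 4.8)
To prove Proposition~\ref{P:min_lower}, the plan is to show that the complementary ``bad'' event $\mathcal B=\{X_N(r)-X_N(0)\le \mu r \text{ for all }r\le T\}$, where $\mu=\sqrt2-(1+\eta)\pi^2/(\sqrt2(\log N)^2)$ and $T=c_\eta(\log N)^3$, has probability tending to $0$. This is the branching-Brownian-motion analogue of an intermediate estimate of B\'erard and Gou\'er\'e~\cite{BG}. The key is to produce, with probability tending to $1$, a cloud of at least $N$ particles that stay confined to a space--time tube moving at a speed $\mu'$ slightly larger than $\mu$: once such a cloud exists, the selection rule cannot remove any of these particles before one of them becomes the current minimum (a removed particle is, by definition, the global minimum at that instant), so in either case the minimal particle must reach the lower edge of the tube, contradicting $\mathcal B$.

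Concretely, I would fix $\mu'=\sqrt2-(1+\eta/2)\pi^2/(\sqrt2(\log N)^2)>\mu$, a width $w=w_\eta\log N$, set $B(r)=X_N(0)+\mu' r$, and call a particle of a free (selection-free) branching Brownian motion \emph{good} at time $T$ if its trajectory stays in the tube $[B(r),B(r)+w]$ for all $r\le T$ (up to a harmless $O(1)$ burn-in needed to enter the tube). The heart of the argument is the estimate: there is $q_\eta>0$ such that a single free branching Brownian motion started anywhere in $[B(0),B(0)+\sqrt{\log N}]$ has at least $N$ good particles at time $T$ with probability at least $q_\eta$. First I would compute the first moment by the many-to-one lemma: it equals $e^T$ times the probability that one Brownian path stays in the tube, and, after removing the drift $\mu'$ by a Girsanov change of measure, this is, up to polynomial factors in $N$, $e^{T(1-(\mu')^2/2)}e^{-\pi^2 T/(2w^2)}$, the factor $e^{-\pi^2T/(2w^2)}$ being governed by the principal Dirichlet eigenvalue $\pi^2/(2w^2)$ of $\tfrac12\Delta$ on an interval of length $w$. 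Since $1-(\mu')^2/2=(1+\eta/2)\pi^2/(\log N)^2$, choosing $w_\eta$ so large that $\pi^2/(2w_\eta^2)<\tfrac14\eta\pi^2$ and then $c_\eta$ large makes this at least $N^{1+\kappa}$ for some $\kappa=\kappa_\eta>0$. Then I would bound the second moment by $C_\eta(\text{first moment})^2$, via the standard decomposition of an ordered pair of particles according to the time of their most recent common ancestor, the two-sided confinement keeping the pair correlations under control, and conclude by Paley--Zygmund that $\P(\#\,\mathrm{good}\ge N)\ge q_\eta$.

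Next I would transfer this to the Brunet--Derrida system by a coupling: run, for each initial particle, a free branching Brownian motion issued from it using the same Brownian increments and branching times as in the selection system. If some free branching Brownian motion has $\ge N$ good particles at time $T$, then (since every good particle and all its ancestors sit at height $\ge B(r)$) either one of them is the global minimum at some time $r\le T$, giving $X_N(r)\ge B(r)>X_N(0)+\mu r$, or none of them is ever removed, so at time $T$ they are exactly the $N$ particles of the selection system, all at height $\ge B(T)$; either way $\mathcal B$ fails. To boost the probability to $1$ I would split on $m=\#\{i:X_i(0)\in[X_N(0),X_N(0)+\sqrt{\log N}]\}$. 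If $m\ge\log\log N$, these many families start inside the tube and their free branching Brownian motions are independent, so the chance that none produces $\ge N$ good particles is $\le(1-q_\eta)^{\log\log N}\to 0$. If $m<\log\log N$, then the ``low cluster'' formed by these fewer than $\log\log N$ particles together with their descendants has size $\ll N$ and is wiped out within time $O((\log\log N)/N)\to0$ (branchings elsewhere remove the current minimum at total rate $\asymp N$), after which the minimum has jumped above $X_N(0)+\sqrt{\log N}-o(1)$, which already exceeds $X_N(0)+\sqrt2\,O((\log\log N)/N)=X_N(0)+o(1)$; so again $\mathcal B$ fails with probability $\to1$. Combining the two cases gives $\P(\mathcal B)\to0$.

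The main obstacle is the core first/second moment estimate: one has to carry out the confinement computation precisely enough to balance the tube cost $\pi^2 T/(2w^2)$ — which is exactly what forces the width to be of order $\log N$, the signature of the Brunet--Derrida scaling — against the $(1+\eta)$ slack in the target speed, and to show that pairs of good particles with a recent common ancestor contribute negligibly. This is precisely the computation carried out by B\'erard and Gou\'er\'e in the branching random walk setting, and I expect it to go through for branching Brownian motion with only routine modifications, replacing the ballot-type estimates for random walks by their Brownian bridge counterparts.
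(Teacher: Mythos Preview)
The paper does not actually prove Proposition~\ref{P:min_lower}; it merely asserts that it ``can be established from an intermediate result of B\'erard and Gou\'er\'e~\cite{BG}, with their proof adapted for branching Brownian motion.'' Your proposal is precisely an attempt to carry out that adaptation, so at the level of strategy you are on the track the paper indicates. The coupling step (either an ancestor of a good particle is selected out, forcing $X_N(r)\ge B(r)>X_N(0)+\mu r$, or all $\ge N$ good particles persist and fill the system at time $T$) is correct, as is the rough treatment of the case $m<\log\log N$.

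There is, however, a genuine quantitative gap in the second-moment step. With a tube of width $w=w_\eta\log N$ moving at speed $\mu'$ and a starting point at distance $y\in[0,\sqrt{\log N}]$ from its lower edge, the second moment of the number of good particles is \emph{not} bounded by $C_\eta$ times the square of the first moment. In the many-to-two formula the dominant contribution comes from pairs whose most recent common ancestor lies near the \emph{upper} edge of the tube, where the Girsanov weight $e^{\mu' u}$ is of order $e^{\mu' w}=N^{\sqrt 2\,w_\eta}$: one has to integrate $e^{\mu' u}\sin^3(\pi u/w)$ over $[0,w]$, which is of order $e^{\mu' w}/w^3$, and the resulting ratio $\E[G^2]/(\E[G])^2$ comes out of order $e^{\mu'(w-y)}/\bigl(\lambda\,w^4\sin(\pi y/w)\bigr)$, hence of order $N^{c}$ for some $c>0$. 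Paley--Zygmund then gives a per-family success probability of order $N^{-c}$, not a uniform $q_\eta>0$, and boosting over $m\ge\log\log N$ independent families is nowhere near enough. (Equivalently: for this drifted strip the additive martingale is not $L^2$-bounded.)

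This is not fatal to the approach, but it means the B\'erard--Gou\'er\'e argument cannot be summarised as ``first/second moment plus Paley--Zygmund'' with your choice of tube and entry point. Two adjustments are needed. First, use the monotone coupling (Lemma~\ref{L:monotone}) to reduce to the case where all $N$ particles start at $X_N(0)$, which makes your case split on $m$ unnecessary and lets you exploit all $N$ families at once. Second, arrange --- via a preliminary phase of length of order $(\log N)^2$ --- for the relevant ancestor to sit at distance of order $\log N$ (not $O(1)$ or $O(\sqrt{\log N})$) from the killing line before the moment computation is applied; this is what brings the variance under control in~\cite{BG}. Your burn-in remark gestures at this, but an $O(1)$ burn-in does not move the starting point far enough into the tube to tame the second moment.
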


\subsection{Discussion and open problems}\label{S:discussion}

\emph{Long term behaviour for general fitness functions.}
Theorems \ref{T:modulus} and \ref{T:linear} establish the long-term behaviour for the cloud of particles for the two special cases where the function $s$ is either the Euclidean norm or a linear function. In both cases, the cloud escapes to $\infty$ at positive speed in a possibly random direction. It would be interesting to see how general a phenomenon this is. For instance, assume that $s:\R^d\to \R$ is a smooth, unbounded convex function. What can be said about the long-term behaviour then? One first observation is that the cloud of particles should essentially stay concentrated on level sets of the function $s$.

\medskip \emph{Genealogy.}
In both cases studied here (Euclidean case or case A, and linear case or case B), we observe that the population lines up on an essentially one-dimensional subspace of $\R^d$. For truly one-dimensional systems, it is predicted that the Bolthausen--Sznitman coalescent describes the genealogy of a sample from the population, after rescaling time by $(\log N)^3$. It is therefore reasonable to predict that the same property will hold in higher dimensions as well, at least in cases A and B and perhaps more generally as well, suggesting that the Bolthausen--Sznitman coalescent is a universal scaling limit in all dimensions, subject to assumptions on the function $s$.

\medskip \emph{Equilibrium shape in one dimension.}
Consider the empirical distribution of a (standard) one-dimensional Brunet--Derrida particle system.
\[
 \nu^N_t = \frac1N \sum_{n = 1}^N \delta_{X^N_n(t)},
\]
and the associated c\`adl\`ag empirical tail distribution 
\[
 F^N(t,x) = \frac1N \sum_{n=1}^N \bo 1 \{X^N_n(t) > x \} = \nu^N_t((x, \infty)).
\]
It is not hard to see that the system of particles, viewed from the minimum position at time $t$, has regeneration times and therefore $F^N(t, x+ X_N(t))$ converges pointwise to some limit distribution $F^N_{eq}(x)$ as $t \to \infty$, wherever $F^N_{eq}$ is continuous. It is natural to start the particle system in some initial condition distributed according to $F^N_{eq}$ and ask for its properties. We believe, but have been unable to prove, that $F^N_{eq}$ satisfies \eqref{E:init_cond}. In fact, we make the following conjecture about $F_{eq}^N$.

Reasoning by analogy with the results of Durrett and Remenik \cite{DR}, and using the martingale problem for the empirical distributions of a free branching Brownian motion (see for example, Lemma 1.10 in Etheridge \cite{Etheridge}), we expect $F^N_{eq}(t,x)$ to converge in distribution to $F(t,x)$, the solution to the free boundary problem:
\begin{equation}\label{E:FB}
\begin{cases} 
 & \displaystyle\frac{\partial F}{\partial t} = \frac12\frac{\partial^2 F}{\partial x^2} + F(t,x) \ \ \all x > \gamma(t), \\
 & F(t,x) = 1 \ \ \all x \leq \gamma(t),
 \end{cases}
\end{equation}
where $\gamma : [0, \infty) \to \bb R$ is a continuous, increasing function starting from $0$, which is part of the unknown in \eqref{E:FB}. (Note that Durrett and Remenik's argument breaks down for particles that perform Brownian motion, as it is essential in their coupling that particles sit still in between branching events. It is unclear how to adapt their argument to the case of Brownian motion). The first equation is simply the linearised FKPP equation \eqref{E:FKPP}, which is satisfied asymptotically as $x \to \infty$ by the distribution tail of the position of the rightmost particle in a (free) branching Brownian motion. The second equation on the other hand represents the effect of selection, and $\gamma(t)$ then describes the limiting position of the minimal particle.
\cite{DR} shows the existence of a family of travelling wave solutions for a class of problems similar to \eqref{E:FB}. Here the traveling wave solutions can be found explicitly: if $F(t,x) = W(x-ct)$ solves \eqref{E:FB}, we find
\[
 -cW' = \frac12W'' + W.
\]
This is a second order differential equation which, as is well known, has positive solutions only if the speed $c$ of the traveling wave satisfies $c\geq \sqrt{2}$. For $c=\sqrt{2}$ the solution is 
\begin{equation}
 W_{*}(x) = (\sqrt2x + 1)e^{-\sqrt2x}.
\end{equation}
Turning back to $F^N_{eq}$, stationarity suggests that $F^N_{eq}$ is in the limit as $N\to \infty$ a traveling wave solution of \eqref{E:FB}. But by Proposition \ref{P:min_upper} if $F_{eq}^N$ is a travelling wave solution the speed would have to be at most $\sqrt{2}$, and so equal to $\sqrt{2}$. Therefore, we conjecture that
\begin{equation}\label{E:conj_limit_shape}
 F^N_{eq}(x) \to W_*(x)
\end{equation}
uniformly on compact sets as $N\to \infty$. 

\medskip \emph{Equilibrium shape in high dimensions.} Let $d\geq 1$ and fix an arbitrary smooth selection function $s$. For reasons similar to above, it is possible to define a notion of limiting equilibrium shape of the system as $t\to \infty$. Theorem \ref{T:sausage} gives information about the dimensions (width and length) of the limiting shape in case B. However, an inspection of the simulations suggests that particles are far from uniformly distributed within that shape. In the direction $\lam$, we expect the density of particles to be close to $W_*(x)$ for the same reasons as above. In the transverse direction $\lam^\perp$ however, particles appear somewhat `clustered'. Indeed, this is to be expected given the hierarchical structure of the Bolthausen--Sznitman coalescent. Clusters of particles represent groups of particles coming from a close common ancestor. However,  clusters are also intertwined because of heat kernel smoothing. It is an interesting question to identify the density of particles at equilibrium.

\subsection{Biological applications: the effect of recombination}

As alluded to in earlier parts of this introduction, our Brunet--Derrida system in more than one dimension can be thought of as a model for the effect of selection on multiple linked loci. In this interpretation, we track the fitness of not one but $d$ loci in a population of size $N$. Each particle corresponds to one-half of an individual's genetic material, and each of the $d$ coordinates of that particle represents the fitness at the corresponding locus. Her total fitness will then be a function of these $d$ values, typically just the sum. In this interpretation, we are assuming that the total fitness of each particle evolves like independent Brownian motions and branch independently of one another, which is a simplification because in reality, two particles -- making up one individual -- will branch simultaneously. For the same reasons, whereas in our model we only remove one particle at a time, it would be make more sense to remove two particles at once (also making up an individual). But we choose to ignore the correlations between an individual's two genetic halves, and still believe that the model captures some essential features of reproduction. Note that, as specified above, the model ignores the possibility of recombination. But we will precisely explain the effect of adding recombination to the model in a moment and show that it leads to an increase in overall fitness. 

It has been a longstanding problem in evolutionary biology to explain the ubiquitous nature of diploid populations over haploid populations. Indeed, in diploid populations the chance of a particular gene being transmitted to an offspring is only 50\%, whereas it is 100\% in haploid populations! This would suggest that haploid populations are far more advantageous from the point of view of a particular gene. This paradox was in fact raised soon after the introduction of Darwin's theory of natural selection and evolution. 

As early as 1889, Weismann \cite{Weismann} advocated that sex functions to provide variation for natural selection to act upon. However it is fair to say that no real consensus was achieved in the population genetics community, especially after influential arguments by Williams \cite{Williams} raised doubts on Weismann's theory. The controversy reached the point where understanding the advantage of sexual reproduction became the ``queen of problems in evolutionary biology" \cite{Bell}. We refer to Burt \cite{Burt} for an excellent and highly readable survey of this question. 

In his study of the problem, Burt \cite{Burt} observed that his models led to a negative correlation between the fitness on the two chromosomes, which is equivalent to a cloud of particles being spread out in the direction orthogonal to the fitness gradient (see Fig. 1D of \cite{Burt}). He then reasoned that a small amount of recombination would lead to a reduction in this correlation and greater variance in the overall fitness, ultimately leading to a fitter population, as can be seen on Figure \ref{F:recomb}. Thus Theorem \ref{T:sausage} can be viewed as a rigorous justification of the Weissmanian proposal in this setting. 

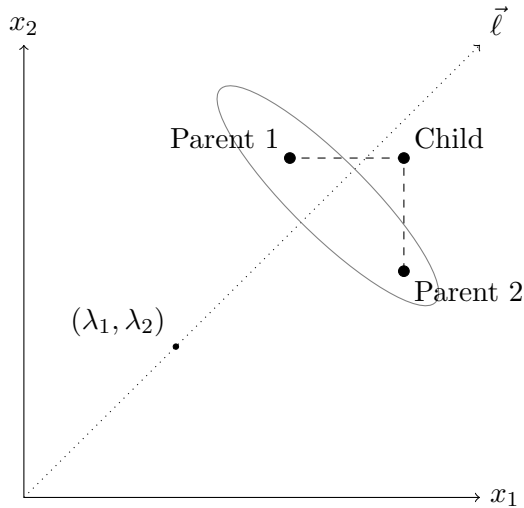
\begin{figure}
\centering
\begin{tikzpicture}
 \draw [<->] (6,0) -- (0,0) -- (0,6);
 \draw [->, dotted] (0,0) -- (6,6);
 \node [above] at (0,6) {$x_2$};
 \node [right] at (6,0) {$x_1$};
 \node [above right] at (6,6) {$\vec \ell$};
 \draw [fill] (2,2) circle (1pt) node [above left] {$(\lam_1, \lam_2)$};
 \draw [rotate around={45:(4,4)}, gray] (4,4) ellipse (0.5 and 2);
 \draw [fill] (3.5,4.5) circle (2pt) node [above left] {Parent 1};
 \draw [fill] (5,3) circle (2pt) node [below right] {Parent 2};
 \draw [dashed] (5,3) -- (5,4.5) -- (3.5,4.5);
 \draw [fill] (5,4.5) circle (2pt) node [above right] {Child};
\end{tikzpicture} \\
\caption{In the presence of recombination, the offspring of two individuals with positions $(x_1, y_1)$ and $(x_2, y_2)$ is either $(x_1, y_2)$ or $(x_2, y_1)$. If the fitness across loci is negatively correlated (i.e., if the shape of the cloud is elongated in the direction transverse to fitness gradient), this leads to an overall increase in the variance of the fitness distribution, even though the mean in unchanged. In turn, this results in increased response to natural selection.}
\label{F:recomb}
\end{figure}

\medskip \textbf{Acknowledgements.} We are grateful to Julien Berestycki for a number of fruitful conversations at several stages of this project. In particular we learned of the potential relation between multidimensional Brunet--Derrida systems and the role of recombination from him, and we are grateful to have been shown a draft of \cite{BerestyckiYu} which raised that issue. 


\section{Proof of Theorems \ref{T:modulus} and \ref{T:linear}}

We are now ready to give a proof of Theorem \ref{T:modulus} and \ref{T:linear}. In this section $N > 1$ is fixed.

We first begin with a formal construction of the Brunet--Derrida particle system. Let $(J_i)_{i\geq 0}$ be the jump times of a Poisson process with rate $N$ with $J_0 = 0$, and let $(K_i)_{i\geq 1}$ be an independent sequence of i.i.d. uniform random variables on $\{1, \ldots, N\}$ . The process is started in some given initial condition. Then inductively, for each $i \geq 1$, assuming that the system is defined up to time $J_{i-1}$ with $s(X_1(J_{i - 1})) \geq \ldots \geq s(X_N(J_{i - 1}))$, we define
\begin{equation}\label{E:coup_cts}
 X_n(t) = X_n(J_{i - 1}) + Z_n(t - J_{i - 1}), \quad t \in [J_{i-1}, J_i),
\end{equation}
where $(Z_n(t), t \geq 0)$ are independent Brownian motions in $\R^d$, independent from $(K_i)$ and $(J_i)$. At time $J_i$, we duplicate particle $X_{K_i}(J_i^-)$ and remove the particle $\min_{1\leq n \leq N} s(X_n(J_i^-))$. Note that if the duplicated particle is the particle of minimal score, the net effect is that nothing happens. We now relabel the particles over this interval in the usual convention of descending fitness so
\[
 s(X_1(t)) \geq \ldots \geq s(X_N(t)), \quad t \in [J_{i-1}, J_i].
\]

\subsection{Proof of Theorem \ref{T:linear}}

We start with a few elementary facts about (free) branching Brownian motion $\bar X_1(t), \ldots, \bar X_{\bar N(t)}(t)$ in $\R$, where $\bar N(t)$ is the number of particles at time $t$. In keeping with our convention for this article we order particles from right to left. We assume that initially there is one particle at the origin.

The following lemma is a trivial but useful result to relate the statistics for all the particles alive in a free branching Brownian motion to a single Brownian motion and is sometimes known in the literature as the many-to-one lemma (see for example \cite{HH}).

\begin{lem}\label{L:many_to_one} 
Let $T$ be a random stopping time of the filtration $\bar \F_t = \sigma(\bar X_i(s), i\leq \bar N(t), s\leq t)$, and assume that $T$ is almost surely finite. For $s < T$ and each $i \leq \bar N(T)$, let $\bar Y_i(s)$ be the position of the unique ancestor of $\bar X_i(T)$. Then for any bounded measurable functional $g$ on the path space $C([0, \infty))$,
\[
 \Ex \left[ \sum_{i \leq \bar N(T)} g((\bar Y_i(s))_{s \leq T}) \right] = \Ex[e^T g((B_s)_{s \leq T})],
\]
where $(B_s)_{s \geq 0}$ is a standard Brownian motion. 
\end{lem}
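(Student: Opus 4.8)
The plan is to reduce to the case of a deterministic time $T=t$ by a standard optional-stopping/approximation argument, and then prove the deterministic statement by a direct induction on the structure of the branching tree, or equivalently via the first-branching decomposition of branching Brownian motion. First I would treat the deterministic case: fix $t>0$ and prove that for bounded measurable $g$ on $C([0,t])$,
\[
 \Ex \left[ \sum_{i \leq \bar N(t)} g((\bar Y_i(s))_{s \leq t}) \right] = e^{t}\,\Ex[g((B_s)_{s \leq t})].
\]
Condition on the time $\tau$ of the first branching event, which is $\mathrm{Exp}(1)$. On $\{\tau > t\}$ (probability $e^{-t}$) there is a single particle whose trajectory is a Brownian path, contributing $e^{-t}\Ex[g((B_s)_{s\le t})]$. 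On $\{\tau = u \le t\}$, the initial particle runs a Brownian motion on $[0,u]$, then splits into two independent branching Brownian motions started from $B_u$; by the induction hypothesis (or a recursive/Gronwall argument on the function $\phi(t) := \Ex[\sum_i g(\cdots)]/\Ex[g(B)]$ when $g\equiv 1$, then the general bounded $g$) each of the two subtrees contributes, in expectation, $e^{t-u}$ times the expected value of $g$ evaluated along the concatenated path. Summing the two contributions and integrating $e^{-u}\,du$ over $u \in [0,t]$ gives
\[
 e^{-t} + \int_0^t 2 e^{-u} e^{t-u}\, du \cdot (\text{path factor}) ,
\]
and the elementary identity $e^{-t} + 2\int_0^t e^{t-2u}\,du = e^{t}$ closes the computation, once one checks the Markov/independence structure makes the ``path factor'' exactly $\Ex[g((B_s)_{s\le t})]$ (the concatenation of a Brownian path on $[0,u]$ with an independent Brownian path on $[u,t]$ is again a Brownian path on $[0,t]$, by the Markov property).

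To pass from deterministic times to a general a.s.-finite stopping time $T$, I would first handle stopping times taking countably many values $\{t_k\}$: on $\{T = t_k\}$, which is $\bar\F_{t_k}$-measurable, apply the deterministic identity with $g$ replaced by $g \cdot \bo 1\{\cdot \in A_k\}$ for the appropriate event, and sum over $k$; here one must be slightly careful because the event $\{T=t_k\}$ depends on the whole configuration and not just on the ancestral line $\bar Y_i$, so the cleanest route is to invoke the optional stopping theorem for the exponential martingale $M_t := e^{-t}\sum_{i\le \bar N(t)} g_t((\bar Y_i(s))_{s\le t})$ — but since $g$ depends on the time horizon this is not literally a martingale. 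The standard fix is to prove the identity for the additive functional $\sum_{i \le \bar N(t)} h((\bar Y_i(s))_{s \le t})$ with $h$ defined on all of $C([0,\infty))$ and depending only on the path up to time $T$ through a predictable mechanism; alternatively, and most simply, approximate $T$ from above by discrete stopping times $T_n = 2^{-n}\lceil 2^n T\rceil \downarrow T$, use path-continuity of branching Brownian motion and dominated convergence (the total number of particles $\bar N(T_n)$ is controlled since $\Ex[\bar N(t)] = e^t < \infty$ and $g$ is bounded) to pass to the limit, and invoke the deterministic identity on each $\{T_n = t_k\}$ together with the strong Markov property of the branching system.

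The main obstacle I expect is precisely this passage through the stopping time: keeping track of the fact that $g$ is evaluated on the \emph{ancestral path} $(\bar Y_i(s))_{s\le T}$ while the stopping event $\{T \le t\}$ may depend on the positions of \emph{all} particles (not just ancestors of survivors at time $T$), so that one cannot naively say ``freeze the tree at time $T$ and apply the $t$-version.'' The resolution is that $\{T > t\}$ is still $\bar\F_t$-measurable and one can condition step by step down the tree: the first-branching decomposition commutes with stopping because $\{T > \tau\} $ or $\{T \le \tau\}$ is decided by time $\tau$, and on $\{T>\tau\}$ the two subtrees evolve as independent copies of the whole process with their own stopping times $T-\tau$ measured from the subtree's root — this yields the same recursion as in the deterministic case, now for the function $\psi(t) := \sup$ over stopping times $\le t$, and a Gronwall-type argument with the a priori bound $\Ex[\bar N(T\wedge t)] \le e^t$ (itself from the $g\equiv 1$ case) controls everything and lets one take $t\to\infty$ using $T<\infty$ a.s. All other steps — the elementary exponential integral, the Markov property for concatenating Brownian paths, boundedness of $g$ — are routine.
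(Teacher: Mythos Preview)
The paper does not actually prove this lemma: it calls it ``a trivial but useful result\ldots sometimes known in the literature as the many-to-one lemma'' and simply refers to \cite{HH}. So there is no in-paper argument to compare with; you have supplied far more than the authors do.

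Your first-branching decomposition for the deterministic case $T=t$ is correct and standard; the identity $e^{-t}+2\int_0^t e^{t-2u}\,du=e^t$ closes the recursion, and a monotone-class reduction to cylinder functionals $g(\omega)=\prod_k f_k(\omega(t_k))$ makes the ``path factor'' step rigorous. This is all the paper ever uses (both applications, in Lemma~\ref{L:maximum} and in the proof of Proposition~\ref{P:min_upper}, take $T$ deterministic).

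For general stopping times there is a genuine wrinkle you have partly identified but not resolved. First, the statement itself is ambiguous: on the right-hand side $T$ must make sense as a functional of the single Brownian path $(B_s)$, so the lemma really requires $T$ to be a stopping time of the one-path filtration, not of the full branching filtration $\bar\F_t$ (this is how it is stated in \cite{HH}). Second, your proposed recursion breaks at the step ``on $\{T>\tau\}$ the two subtrees evolve as independent copies with their own stopping times $T-\tau$'': if $T$ depends on the whole configuration, then $T-\tau$ is \emph{not} a stopping time of either subtree's filtration alone, and the recursion does not close. The clean route for the stopping-time version is the spine change of measure: one shows $e^{-t}\bar N(t)$ is a martingale, tilts by it, picks a particle uniformly at random, and proves its ancestral line is a Brownian motion under the new measure; optional stopping then applies directly. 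Your first-branching approach is fine for fixed $t$ but is not the natural tool for the stopped version.
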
 

With the many-to-one lemma, we can obtain a naive bound for the maximum displacement of a particle at time $t$ from its parent at time $0$, as well as the running maximum.

\begin{lem}\label{L:maximum} 
For any $K > 0$,
\[
 \P(\bar X_1(t) \geq \sqrt 2 t + K) \leq e^{-\sqrt2 K}.
\]
Moreover,
\[
 \P(\sup_{s \leq t} \bar X_1(s) \geq \sqrt 2 t + K) \leq 2e^{-\sqrt2 K}.
\]
\end{lem}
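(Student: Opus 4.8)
The plan is to derive both bounds from the many-to-one lemma (Lemma \ref{L:many_to_one}) together with standard Gaussian tail estimates for a single Brownian motion. For the first inequality, I would apply Lemma \ref{L:many_to_one} with the deterministic stopping time $T = t$ and the functional $g(w) = \bo 1\{w(t) \geq \sqrt 2 t + K\}$, which is bounded and measurable on path space. This gives
\[
 \Ex\left[ \#\{ i \leq \bar N(t) : \bar X_i(t) \geq \sqrt 2 t + K \} \right] = e^t \, \P(B_t \geq \sqrt 2 t + K).
\]
Since $\{\bar X_1(t) \geq \sqrt 2 t + K\}$ is exactly the event that the count on the left is at least $1$, Markov's inequality bounds $\P(\bar X_1(t) \geq \sqrt 2 t + K)$ by the right-hand side. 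Then I would estimate $\P(B_t \geq \sqrt 2 t + K) = \P(B_1 \geq \sqrt 2 \sqrt t + K/\sqrt t)$ and use the elementary Gaussian bound $\P(B_t \geq a) \leq \tfrac12 e^{-a^2/(2t)}$ for $a \geq 0$; expanding $a^2/(2t)$ with $a = \sqrt 2 t + K$ yields $e^t \cdot \tfrac12 e^{-(\sqrt 2 t + K)^2/(2t)} = \tfrac12 e^{-\sqrt 2 K} e^{-K^2/(2t)} \leq e^{-\sqrt 2 K}$, since $\tfrac12 e^{-K^2/(2t)} \leq 1$. (The factor $\tfrac12$ comfortably absorbs the loss, so no sharp Gaussian asymptotics are needed.)

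For the running-maximum bound, I would reduce to the first part by a reflection/union argument so that the one-dimensional estimate can still be used. One clean route: apply Lemma \ref{L:many_to_one} with $g(w) = \bo 1\{\sup_{s \leq t} w(s) \geq \sqrt 2 t + K\}$, so that
\[
 \P\Big(\sup_{s\leq t}\bar X_1(s) \geq \sqrt 2 t + K\Big) \leq \Ex\Big[\sum_{i\leq \bar N(t)} \bo 1\{\sup_{s\leq t}\bar Y_i(s) \geq \sqrt 2 t + K\}\Big] = e^t\, \P\Big(\sup_{s\leq t} B_s \geq \sqrt 2 t + K\Big),
\]
noting that if some particle's trajectory ever exceeds the level, so does the trajectory of the ancestral line of one of the time-$t$ particles (take any descendant alive at time $t$ of the particle achieving the crossing; its ancestor path $\bar Y_i$ coincides with that trajectory up to the crossing time). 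Then the reflection principle gives $\P(\sup_{s\leq t} B_s \geq \sqrt 2 t + K) \leq 2\,\P(B_t \geq \sqrt 2 t + K)$, and the same computation as above produces the bound $2 e^{-\sqrt 2 K}$.

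The only genuinely delicate point is the combinatorial claim underlying the running-maximum reduction: that the event $\{\sup_{s\leq t}\bar X_1(s) \geq \sqrt 2 t + K\}$ is contained in the event that \emph{some} ancestral line $(\bar Y_i(s))_{s\leq t}$ of a time-$t$ particle reaches the level. This requires that branching Brownian motion does not die out before time $t$ (true here, since the branching mechanism is binary with no death, so $\bar N(t) \geq 1$ always) and that every trajectory present at some time $s \leq t$ has at least one descendant alive at time $t$ — again immediate for pure binary branching. Given this, every past trajectory is a prefix of some $\bar Y_i$, so the reduction is exact. Everything else is a routine Gaussian computation, and I would not belabor it.
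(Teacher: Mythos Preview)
Your proposal is correct and matches the paper's own proof essentially line for line: many-to-one plus Markov's inequality and the Gaussian tail bound for the first estimate, then many-to-one plus the reflection principle for the running maximum. Your extra paragraph justifying that every past trajectory is a prefix of some time-$t$ ancestral line (using that binary branching never dies out) is a detail the paper leaves implicit, but the argument is otherwise identical.
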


\begin{proof} 
By Lemma \ref{L:many_to_one},
\begin{align*}
 \P(\bar X_1(t) \geq \sqrt 2t + K) & \leq \Ex \left[ \sum_{i \leq \bar N(t)} \bo 1 \{\bar X_i(t) \geq \sqrt 2 t + K \} \right] \\
 & = e^t \P(B_t \geq \sqrt 2t + K)\\
 & \leq e^t e^{-\frac1{2t}(\sqrt 2t + K)^2} \leq e^{-\sqrt2 K},
\end{align*}
where we use the well known tail bound for a standard normal random variable $X$, and $a > 0$, $\P(X \geq a) \leq e^{-\frac{a^2}2}$.

For the historic maximum, a similar argument shows
\[
 \P(\sup_{s \leq t}  \bar X_1(s) \geq \sqrt 2t + K) \leq e^t \P(\sup_{s \leq t} B_s \geq \sqrt 2t + K).
\]
Using the reflection principle,
\[
 \P(\sup_{s \leq t} B_s \geq \sqrt 2t + K) = 2 \P(B_t \geq \sqrt 2t + K),
\]
and the result follows.
\end{proof}

When a Brunet--Derrida system is driven by a score function $s(x) = \< x, \lam \>$, where $\lam \in \bb S^{d-1}$, we have already noted that after projecting the particle system onto $\Span(\lam)$, we recover a standard one-dimensional Brunet--Derrida system. For such systems, we have an easy but useful coupling used by B\'erard and Gou\'er\'e \cite{BG}.

\begin{lem}\label{L:monotone} 
Consider two (standard) one-dimensional Brunet--Derrida systems, $(X_n(t), 1\leq n \leq N)_{t\geq 0}$ and $(Y_n(t), 1\leq n \leq N')_{t\geq 0}$, $N \leq N'$, which are initially ordered $X(0) \prec Y(0)$ in the sense of stochastic domination: that is, there is a coupling of $X(0)$ and $Y(0)$ such that
\[
 Y_1(0) \geq X_1(0); \ldots ; Y_N(0) \geq X_N(0).
\]
Then we can couple $X(t)$ and $Y(t)$ for all time such that $X(t) \prec Y(t)$ for all time $t \geq 0$.
\end{lem}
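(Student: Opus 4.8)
The plan is to construct the coupling explicitly on the event space of the Poisson driving clock, and to maintain the stochastic domination $X(t) \prec Y(t)$ as an invariant that we propagate through the three types of events: Brownian displacement between jumps, branching (duplication), and selection (removal of the minimum). First I would run both systems off a \emph{common} Poisson clock $(J_i)_{i \geq 0}$ of rate $N'$ (the larger rate), so that the larger system $Y$ branches at every $J_i$, while the smaller system $X$ only branches at a thinned subsequence: independently at each $J_i$, with probability $N/N'$ declare it an "$X$-branching time" as well. Between consecutive jump times, I would drive the $k$-th ranked particle of $X$ and the $k$-th ranked particle of $Y$ by the \emph{same} Brownian increment $Z_k$; since $x \mapsto x$ is monotone in one dimension, adding the identical increment to $Y_k \geq X_k$ preserves $Y_k(t) \geq X_k(t)$ for each $k = 1, \ldots, N$ throughout the interval, so ranks are not disturbed and domination is preserved under diffusion.

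Next I would handle the branching/selection steps. At a jump time $J_i$ that is an $X$-branching time, pick the same uniform label $K_i \in \{1,\ldots,N\}$ for both systems (for $Y$ one also needs a label in $\{N+1,\ldots,N'\}$ in general, but it is cleanest to re-index so that $Y$'s duplication label is $K_i$ when $K_i \leq N$ and handled separately otherwise): duplicating the $K_i$-th ranked particle in each system and then deleting the current minimum. The key elementary fact to verify is: \emph{if $X \prec Y$ (as sorted $N$- and $N'$-tuples with $N \leq N'$), then duplicating a particle with the same rank in each and deleting the minimum of each preserves $\prec$.} This is a finite combinatorial check on sorted sequences — duplication of the $k$-th entry of a sorted sequence yields another sorted sequence dominating the original (for $Y$) resp.\ dominated appropriately, and removing the least element only helps domination since we compare the top $N$ of each. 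At a jump time that is \emph{not} an $X$-branching time, $X$ is unchanged while $Y$ performs a duplication-and-removal, which can only push $Y$'s sorted profile upward (or leave it fixed), so $X \prec Y$ persists. I would also note the degenerate case flagged in the construction — when the duplicated particle is itself the minimum, nothing happens — which is harmless for the invariant.

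The routine-but-necessary step is a clean statement and proof of the combinatorial lemma on sorted sequences: for $a_1 \geq \cdots \geq a_N$ and $b_1 \geq \cdots \geq b_{N'}$ with $N \leq N'$ and $b_j \geq a_j$ for $j \leq N$, after inserting a copy of $a_k$ into the $a$-sequence and a copy of $b_k$ into the $b$-sequence, re-sorting, and deleting the smallest element from each, the same inequalities hold for the top $N$ entries. This is proved by tracking how the rank of each surviving entry shifts by at most one. The main obstacle — really the only subtle point — is bookkeeping the mismatch between the two population sizes $N < N'$: one must be careful that the "extra" particles in $Y$ (ranks $N+1, \ldots, N'$) never need to be compared against anything in $X$, and that when $Y$ duplicates a particle of rank $> N$ or deletes its minimum from among those low-ranked extras, the top-$N$ comparison with $X$ is unaffected. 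Once the size-mismatch indexing is set up correctly, everything else is monotonicity of $x \mapsto x$ plus the sorted-sequence lemma, and the coupling is then defined for all $i$ by induction, giving $X(t) \prec Y(t)$ for all $t \geq 0$.
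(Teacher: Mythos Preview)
Your proposal is correct and follows the same approach as the paper, whose proof is a single sentence: construct both systems via \eqref{E:coup_cts} using the same jump times $(J_i)$ and the same family of Brownian motions $(Z_n)$. Your write-up is considerably more careful than the paper's sketch---in particular your Poisson thinning and sorted-sequence bookkeeping address the size mismatch $N<N'$ that the paper glosses over.
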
 

\begin{proof}
Construct $X(t)$ and $Y(t)$ using \eqref{E:coup_cts} with the same jump times $(J_i)_{i \geq 0}$ and the same family $(Z_n(t), t \geq 0)$ of independent Brownian motions in $\R$.
\end{proof}

Adapting the (easy) proof of Proposition 2 of \cite{BG} one obtains:

\begin{lem}\label{L:1D_speed} 
Consider a one-dimensional Brunet--Derrida system initially with $X_1(0) \geq \ldots \geq X_N(0) = 0$. Then
\[
 \frac{X_1(t)}t \to v_N,
\]
almost surely, where $v_N > 0$ is a deterministic constant.
\end{lem}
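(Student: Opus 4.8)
The plan is to establish the existence of the deterministic speed $v_N$ by a subadditivity argument, following the strategy of B\'erard and Gou\'er\'e \cite{BG}. The key observation is that the front position $X_1(t)$ of a one-dimensional Brunet--Derrida system has an approximate superadditivity property: if we know the configuration at time $s$, then running the process for an additional time $t$ moves the front by an amount that is stochastically at least what it would be if we restarted from a configuration that is dominated by the time-$s$ configuration. Using the monotone coupling of Lemma \ref{L:monotone}, this lets us compare the increments $X_1(s+t) - X_1(s)$ with an independent copy of $X_1(t)$ started from a suitable reference configuration.

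More concretely, first I would reduce to a stationary-increments setup: consider the system ``viewed from the leftmost particle,'' i.e. track $(X_n(t) - X_N(t))_{1 \le n \le N}$. By translation invariance of Brownian motion and of the selection rule, this is a Markov process on configurations, and one checks it has regeneration times (for instance, the times at which all $N$ particles share a common ancestor, or simply times at which the configuration returns to a bounded region — the relevant point is that excursions of the recentered process are i.i.d., or at least that one can extract an i.i.d.\ renewal structure). Then $X_N(t)$ itself is an additive functional over these regeneration epochs, so the renewal theorem (or the strong law of large numbers for i.i.d.\ increments with finite mean) gives $X_N(t)/t \to v_N$ almost surely for a deterministic constant $v_N$. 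The same limit holds for $X_1(t)$ because $0 \le X_1(t) - X_N(t) = \diam_t$ is tight (indeed $O(1)$ in $t$ for fixed $N$, by Lemma \ref{L:maximum} applied within regeneration intervals), so $(X_1(t) - X_N(t))/t \to 0$ almost surely along the subsequence of regeneration times and hence everywhere by monotonicity-type control.

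To see $v_N > 0$: by the monotone coupling (Lemma \ref{L:monotone}) the $N$-particle system dominates the $1$-particle system, which is just a single Brownian motion with no drift — that only gives $v_N \ge 0$. Instead, one uses that with $N \ge 2$ the leftmost particle is removed at branching times, so at each of the (rate-$N$) branching events there is a positive probability that the front configuration strictly advances by at least some fixed amount; coupling with a slowed-down version that only counts such favorable events gives a strictly positive lower bound on the drift of $X_N(t)$. Alternatively one invokes the comparison with a finite branching-Brownian-motion truncation as in \cite{BG}. Either way this is a soft argument.

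The main obstacle is making the regeneration/renewal structure rigorous: one must verify that the recentered process $(X_n(t) - X_N(t))$ genuinely has i.i.d.\ excursions with integrable inter-regeneration times, and that the displacement accumulated over one excursion has finite expectation. Finiteness of the first moment of the displacement is where Lemma \ref{L:maximum} is needed — it controls how far any particle (hence the front) can travel over a time interval of given length, so that the expected displacement per regeneration period is bounded in terms of the expected length of that period, which is finite for fixed $N$ since branching happens at rate $N$ and the relevant hitting events have positive probability uniformly. Once integrability is in hand, Kingman's subadditive ergodic theorem or simply the i.i.d.\ SLLN closes the argument, and determinism of $v_N$ is automatic from the $0$--$1$ law for the renewal limit. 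This is exactly the content that is ``adapted from the easy Proposition 2 of \cite{BG},'' so I would cite that proof for the routine verifications and only spell out the changes needed to pass from branching random walk to branching Brownian motion (namely replacing discrete-time step bounds with the continuous-time estimates of Lemma \ref{L:maximum}).
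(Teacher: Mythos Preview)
Your first paragraph is exactly the paper's argument: monotonicity via Lemma~\ref{L:monotone} sets up the subadditive structure (comparing $X_1(s+t)$ with $X_1(s)$ plus an independent copy of the increment started from the worst dominated configuration), and Kingman's subadditive ergodic theorem then yields the almost sure limit $v_N$ directly. That is already the complete proof, adapted from Proposition~2 of \cite{BG}.

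The regeneration/renewal discussion in your second paragraph is a valid alternative route, but it is a detour rather than a ``concretisation'' of the first paragraph: Kingman's theorem does not require any i.i.d.\ renewal structure, only the subadditivity inequality and stationarity/ergodicity of the increment family, which the monotone coupling delivers without ever looking at the recentered process or its regeneration times. So you can drop that entire layer. (The regeneration structure is genuine---the paper uses it elsewhere, e.g.\ in discussing $F^N_{eq}$ and in Lemma~\ref{L:geo_MRCA_bound}---but it is not needed here.)

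For $v_N>0$, the paper's argument is cleaner than either of your sketches: rather than building an ad hoc lower bound on the drift, one observes $v_1=0$ and shows that $N\mapsto v_N$ is strictly increasing, via a straightforward strengthening of Proposition~3 of \cite{BG}. Your second suggestion (``comparison with a finite branching-Brownian-motion truncation as in \cite{BG}'') is in the right spirit but points to the wrong proposition.
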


The argument is based on the monotonicity of Lemma \ref{L:monotone} and Kingman's sub additive ergodic theorem. To see that $v_N > 0$ for $N > 1$, we observe that $v_1 = 0$ and that there is a straightforward strengthening of Proposition 3 of \cite{BG} to see that $(v_N, N \geq 1)$ is strictly increasing.

The same argument also applies to $X_N(t)$, but \emph{a priori} the limiting velocity $v'_N$ might be distinct from $v_N$. In fact the following lemma, which can be proved in the same fashion as Proposition 1 of of \cite{BG}, shows that $v_N = v'_N$.

\begin{lem}\label{L:diameter}
Let $(X_n(s), 1 \leq n \leq N)_{s \geq 0}$ be a (standard) one-dimensional Brunet--Derrida system. Then for all $\e > 0$ and $t > (1 + \kappa) \log N$ for some $\kappa > 0$,
\[
 \lim_{N \to \infty} \P \left( X_1(t) - X_N(t) \geq (3\sqrt2 + \e)\log N \right) = 0.
\]
\end{lem}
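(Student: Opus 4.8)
\emph{Proof plan.} We follow the argument of Proposition~1 of \cite{BG}. Assume first that all $N$ particles start at the origin (any initial condition of diameter $o(\log N)$ is treated identically, being squeezed in the order $\prec$ between the two all-at-one-point configurations at $\max_n X_n(0)$ and $\min_n X_n(0)$, by Lemma~\ref{L:monotone}); the constant $3\sqrt 2$ refers to this regime. The core of the argument establishes the bound for $t$ of order $\log N$, say $t = t_N := (1+\kappa)\log N$ with $\kappa$ small; extending it to all $t > (1+\kappa)\log N$ — where the displacement of the minimum must be controlled uniformly in time, e.g.\ via the regeneration structure of the system viewed from its minimum (cf.\ Section~\ref{S:discussion}) — is the point I expect to be the main obstacle, since the crude bounds we use for $t\asymp\log N$ grow linearly in $t$ and therefore exceed $3\sqrt 2\log N$ once $t$ is a large multiple of $\log N$.

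\emph{Step 1: sandwiching by a free branching Brownian motion.} Couple the Brunet--Derrida system with a free branching Brownian motion $(\bar X_i(s))_{i\le\bar N(s)}$ started from the same $N$ particles: run both with the same Poisson clocks and driving Brownian motions, but never remove a particle in the free system (and add the extra rate-$1$ branchings of the lineages that die in the Brunet--Derrida system). An induction over the jump times shows that the set of positions of the Brunet--Derrida particles is at all times contained in the set of positions of the free particles; hence $X_1(s)\le\bar X_1(s)$ and $X_N(s)\ge\min_{i\le\bar N(s)}\bar X_i(s)$ for all $s\ge 0$.

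\emph{Step 2: tail bounds and conclusion.} By Step~1, Lemma~\ref{L:maximum} (applied to each of the $N$ initial particles, with a union bound), and the symmetry $x\mapsto -x$ of free branching Brownian motion, for every $K>0$
\[
 \P\big(X_1(t_N)\ge\sqrt 2\,t_N+K\big)\le N e^{-\sqrt 2 K},
 \qquad
 \P\big(X_N(t_N)\le -\sqrt 2\,t_N-K\big)\le N e^{-\sqrt 2 K}.
\]
Taking $K=(1+\e')\log N/\sqrt 2$ makes both right-hand sides $N^{-\e'}\to 0$, so with probability tending to $1$
\[
 \diam_{t_N}\le X_1(t_N)-X_N(t_N)\le 2\sqrt 2\,t_N+2K=\big(3\sqrt 2+\sqrt 2(2\kappa+\e')\big)\log N,
\]
which is at most $(3\sqrt 2+\e)\log N$ once $\kappa$ and $\e'$ are chosen small enough. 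This proves the claim for $t = t_N$, and the general case follows as indicated above.
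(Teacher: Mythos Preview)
Your approach is the one the paper intends: like the paper you defer to Proposition~1 of B\'erard--Gou\'er\'e, and your Steps~1--2 are a correct adaptation of their argument, giving the bound $(3\sqrt 2+\e)\log N$ at the particular time $t_N=(1+\kappa)\log N$ when the system starts from a single point.

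You are also right that the passage to arbitrary $t>(1+\kappa)\log N$ is where the work lies; this case is genuinely needed, since the paper applies the lemma at $t\asymp(\log N)^3$ in the proof of Theorem~\ref{T:sausage}, and as you observe your free-BBM sandwich on $[0,t]$ only gives a bound growing linearly in $t$. The regeneration idea you mention does yield $\diam_t=O_{\P}(1)$ for each \emph{fixed} $N$, but the regeneration probability of Lemma~\ref{L:geo_MRCA_bound} is not uniform in $N$, so that route does not deliver the constant $3\sqrt 2$ with the required uniformity. The missing step --- and this is really the content of the B\'erard--Gou\'er\'e argument --- is to run your sandwich on the window $[t-t_N,t]$ rather than on $[0,t_N]$. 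The upper bound $X_1(t)\le X_1(t-t_N)+\sqrt 2\,t_N+K$ carries over verbatim; the new point is that the lower bound on $X_N(t)$ must be tied to $X_1(t-t_N)$ and not to $X_N(t-t_N)$ or to the initial minimum. Concretely, the free branching Brownian motion seeded by the \emph{single leader} at time $t-t_N$ has, with high probability, at least $N$ descendants by time $t$ (since $t_N>\log N$), all of them above $X_1(t-t_N)-\sqrt 2\,t_N-O(1)$; an inductive use of the monotone coupling (this is where B\'erard--Gou\'er\'e's simultaneous branching makes the induction clean) shows that the Brunet--Derrida configuration at time $t$ stochastically dominates the top $N$ of this family, whence $X_N(t)\ge X_1(t-t_N)-\sqrt 2\,t_N-O(1)$. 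Subtracting gives the diameter bound uniformly in $t$ and in the initial configuration --- which in particular removes your restriction to starting data of diameter $o(\log N)$.
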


\begin{cor}\label{C:diameter}
For all $N > 1$ and $\e > 0$,
\[
 \lim_{s \to \infty} \P \left( \frac{X_1(s) - X_N(s)}s \geq \e \right) = 0.
\]
\end{cor}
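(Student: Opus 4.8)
The plan is to deduce Corollary~\ref{C:diameter} from Lemma~\ref{L:diameter} by a regeneration argument. Fix $N>1$ and $\e>0$. The key observation is that the one-dimensional Brunet--Derrida system, \emph{viewed from its minimal particle} $X_N(t)$, has regeneration times: as noted in the discussion of the equilibrium shape, the centered configuration $(X_n(t)-X_N(t))_{1\le n\le N}$ returns infinitely often to (a neighbourhood of) a fixed reference configuration, and the increments of $X_N$ between successive such returns are i.i.d. In particular, writing $\tau$ for the first regeneration time after a fixed deterministic time, $X_N(t)/t$ and $X_1(t)/t$ both converge almost surely, the former by the renewal theorem applied to the regeneration structure and the latter by Lemma~\ref{L:1D_speed}; and the gap $X_1(t)-X_N(t)$ is a stationary (in fact regenerative) process once started from equilibrium.

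First I would make precise the regeneration structure: between two branching events nothing forces the configuration to contract, but after a branching event at which the \emph{top} particle is duplicated and the minimum deleted, there is a uniformly positive probability (depending only on $N$) that within the next unit of time the diameter $X_1-X_N$ drops below some fixed constant $D_0$ and stays controlled; iterating, one extracts an increasing sequence of stopping times $0=T_0<T_1<T_2<\cdots$ such that $(X(T_k+\cdot)-X_N(T_k))_{0\le\cdot\le T_{k+1}-T_k}$ are i.i.d.\ excursions and the inter-regeneration times $T_{k+1}-T_k$ have exponential tails. Then $X_N(T_k)=\sum_{j<k}(X_N(T_{j+1})-X_N(T_j))$ is a random walk with i.i.d.\ increments of finite (indeed exponential-tail) mean, so $X_N(T_k)/T_k\to v_N'$ a.s., and interpolating over the bounded-mean excursions gives $X_N(t)/t\to v_N'$ a.s.; combined with Lemma~\ref{L:1D_speed} this already shows $(X_1(t)-X_N(t))/t\to v_N-v_N'$ a.s.

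Next I would use Lemma~\ref{L:diameter} to identify $v_N=v_N'$. Apply the lemma with, say, $\kappa=1$ and the chosen $\e$: for all $N$ large and all fixed $t>(1+\kappa)\log N$ we have $\P(X_1(t)-X_N(t)\ge (3\sqrt2+\e)\log N)\to 0$, so in particular for our fixed $N$ (absorbing the failure of ``$N$ large" into the constant, or re-running the estimate directly for a single $N$ via the many-to-one Lemma~\ref{L:many_to_one} and Lemma~\ref{L:maximum}) there is a deterministic time $t_0$ and a value $C_N$ with $\P(X_1(t_0)-X_N(t_0)\ge C_N)<1/2$. By the regenerative stationarity of the centered system this bound propagates: along the regeneration times $T_k\to\infty$ the law of $X_1(T_k)-X_N(T_k)$ is tight, hence $(X_1(s)-X_N(s))/s\to 0$ in probability, which is exactly the claim; equivalently, the a.s.\ limit $v_N-v_N'$ must be $\le \lim_s C_N/s=0$, and since $X_1(s)\ge X_N(s)$ it is $\ge 0$, so $v_N=v_N'$ and $(X_1(s)-X_N(s))/s\to 0$ a.s., a fortiori in probability.

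The main obstacle is setting up the regeneration structure rigorously: one must check that the centered configuration genuinely regenerates (that the ``good event" of the diameter collapsing to a bounded region has probability bounded below uniformly over the state of the system at the previous regeneration, which uses that after duplicating the top particle the deleted minimum can be chased up within bounded time with positive probability), and that the inter-regeneration times are integrable. An alternative that sidesteps building regeneration from scratch is to quote the monotone coupling of Lemma~\ref{L:monotone} together with Lemma~\ref{L:diameter}: start two copies, one from the actual configuration and one from a translate, run both to a large time, and use tightness of $X_1(t)-X_N(t)$ coming from Lemma~\ref{L:diameter} plus the almost-sure linear speed $v_N$ of the maximum from Lemma~\ref{L:1D_speed} to force the normalized diameter to $0$; I would present whichever is shorter, but flag the regeneration existence as the step deserving care.
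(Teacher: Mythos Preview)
Your approach is workable but considerably more elaborate than what the paper does. The paper treats the corollary as essentially immediate: the paragraph preceding Lemma~\ref{L:diameter} already records that Kingman's subadditive ergodic theorem (the same argument as for Lemma~\ref{L:1D_speed}) applies equally well to $X_N(t)$, yielding $X_N(t)/t\to v'_N$ almost surely for some deterministic $v'_N$. The only remaining point is that $v_N=v'_N$, and this is what the paper attributes to Lemma~\ref{L:diameter} (proved ``in the same fashion as Proposition~1 of \cite{BG}''). Once $v_N=v'_N$, the corollary is just the statement that $(X_1(s)-X_N(s))/s\to 0$ almost surely, hence in probability.

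Your regeneration construction therefore reproves, by a harder route, something the paper obtains for free from subadditivity: the almost sure existence of a speed for $X_N$. The regeneration machinery does buy more --- convergence of the centered configuration to an equilibrium law, which is exactly what the paper invokes informally in Section~\ref{S:discussion} --- but none of that is needed here. Your ``good event'' for regeneration is essentially the event underlying Lemma~\ref{L:geo_MRCA_bound}, so the construction is sound, but it is a detour.

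One genuine wrinkle you correctly flag: Lemma~\ref{L:diameter} as stated is an $N\to\infty$ result, so invoking it verbatim for a \emph{fixed} $N$ is not literally valid. Both you and the paper are implicitly relying on the underlying argument (\`a la \cite{BG}, Proposition~1), which for fixed $N$ gives tightness of the diameter uniformly in $t$. In your regeneration framework this step is in fact redundant: once the excursions between regeneration times are i.i.d., the diameter at (and between) regeneration times is automatically tight, so $v_N=v'_N$ follows without appealing to Lemma~\ref{L:diameter} at all. Your final ``alternative'' via the monotone coupling and Lemma~\ref{L:1D_speed} is the one closest in spirit to the paper's intended one-line proof.
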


Going back to a Brunet--Derrida system in $\R^d$, let $H = \{ x \in \R^d: \< x, \lam \> = 0 \}$ be the orthogonal hyperplane to $\lam$ and let $p_H$ be the orthogonal projection onto $H$.

Referring back to the construction of the system via \eqref{E:coup_cts}, conditional on $\cF_{J_{i-1}}$ (where $\cF_t$ is the filtration generated by the whole system up to time $t$), particles perform $(d-1)$-dimensional Brownian motion on $H$ independent of the motion in $\Span(\lam)$ up to time $J_i$ for every $i \geq 1$. Moreover, since $s(x) = \< x , \lam \>$, $p_H(X_m(J_i))$ is independent of the event that the particle $X_m$ survives a branching event at time $J_i$. Together, these two properties imply by induction that the path of a particle conditioned to survive until time $t$ when projected onto $H$ has the law of a standard $(d-1)$-dimensional Brownian motion. In other words, if $X_n(t)$ is a surviving particle at time $t$ and $Y_n(s)$ is the ancestor of $X_n(t)$ at time $s \leq t$, then $(p_H(Y_n(s)), s \leq t)$ is a standard $(d-1)$-dimensional Brownian motion. Therefore, for all $1 \leq n \leq N$,
\[
 \frac{\|X_n(t)\|_H}{t} = \frac{\|Y_n(t)\|_H}{t} = \frac{\|p_H(Y_n(t))\|}{t} \to 0,
\]
almost surely for all $1 \leq n \leq N$. Therefore
\[
 \max_{1\leq n,m \leq N}  \frac{\| X_n(t) - X_m(t)\|_H}{t} \leq 2 \max_{1 \leq n \leq N} \frac{\|X_n(t)\|_H}{t} \to 0.
\]
Together with Lemma \ref{L:1D_speed}, this completes the proof of Theorem \ref{T:linear}.
\qed

\subsection{Proof of Theorem \ref{T:modulus}}

Assume now $s(x) = ||x||$. Recall in this setting, for $X_n(t) \neq 0$, we write $X_n(t) = R_n(t)\Theta_n(t)$ where $R_n(t) > 0$ and $\Theta_n(t) \in \bb S^{d-1}$ is continuous whenever $X_n(t)$ is continuous. Note also for $d \geq 2$, $d$-dimensional Brownian motion almost surely never hits $0$. Hence, except for any particles initially at $0$, this decomposition is always well-defined. We can work around particles starting from $0$ by instead taking the system at time $t > 0$ as its initial state without altering the proofs. Therefore, without loss of generality, we shall assume from here on that $R_N(0) > 0$ and we need not worry about any particles at $0$.

When considering $(R_n(t), 1 \leq n \leq N)$, we can work in a one-dimensional setting and construct the system in a similar manner as before, except now the displacement step \eqref{E:coup_cts} becomes
\begin{equation}\label{E:coup_cts_mod}
 R_n(t) = R_n(J_{i - 1}) + S_n(R_n(i - 1), t - J_{i - 1}), \quad t \in [J_{i-1}, J_i),
\end{equation}
where $(S_n(r,t), 1 \leq n \leq N)$ are an independent family of solutions to the Bessel stochastic differential equation
\begin{equation}\label{E:bessel}
dS(r, t) = dB(t) + \frac{d - 1}{2S(r, t)} \, dt,
\end{equation}
where $B(t)$ is a standard one-dimensional Brownian motion, and $S(r,t)$ is a solution starting from $S(r,0) = r$. In this construction, we see immediately that $S(r,t)$ is stochastically decreasing in $r$ and that as $r \to \infty$, $S(r,t)$ converges to a standard one-dimensional Brownian motion.

Recall that the default ordering is in descending fitness, and so $R_1(t) \geq \ldots \geq R_N(t)$.

\begin{lem}\label{L:modulus_R}
For all $N > 1$,
\[
 \frac{R_1(t) - R_N(t)}{t} \to 0,
\]
as $t \to \infty$ almost surely. Moreover,
\[
 \frac {R_1(t)}t \to v_N,
\]
almost surely, where $v_N > 0$ is a deterministic constant.
\end{lem}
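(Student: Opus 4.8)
The plan is to sandwich the radial system $(R_n(t),1\le n\le N)$ between a standard one-dimensional Brunet--Derrida system and a standard one perturbed by a small deterministic linear drift, exploiting the two features of the Bessel SDE \eqref{E:bessel} recorded just above: the drift $\frac{d-1}{2S}$ is nonnegative, and it is at most $\e$ as soon as $S\ge (d-1)/(2\e)$. First I would record the monotone coupling in the form needed here. Exactly as in the proof of Lemma \ref{L:monotone}, if two $N$-particle systems are driven by the same jump times $(J_i)$, the same selection variables $(K_i)$, and displacement increments that are ordered pathwise (the increments of one dominating those of the other, particle by particle in the sorted configuration), then the sorted configurations stay coordinatewise dominated for all time; the only ingredients beyond Lemma \ref{L:monotone} are that sorting preserves coordinatewise domination and that the duplicate-and-kill step preserves it, both immediate. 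Since by \eqref{E:bessel} we have $S_n(r,t)-r = B_n(t) + \int_0^t \frac{d-1}{2S_n(r,u)}\,du \ge B_n(t)$, the radial displacement increments dominate those of the driving Brownian motions $B_n$; hence, for the standard one-dimensional Brunet--Derrida system $X$ started from $X_n(0)=R_n(0)$ and driven by the $B_n$, the coupling gives $R_n(t)\ge X_n(t)$ for all $n$ and all $t$.

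From this lower comparison the lower bounds follow. Since $v_N>0$ for $N>1$ (recall $v_1=0$ and $(v_N)$ is strictly increasing), and $X_1(t)/t\to v_N$ and $X_N(t)/t\to v_N$ almost surely for the standard one-dimensional system (Lemma \ref{L:1D_speed} together with the identification $v_N=v'_N$ coming from Lemma \ref{L:diameter}), we get $\liminf_t R_1(t)/t\ge \liminf_t R_N(t)/t\ge v_N$ almost surely, and in particular $R_N(t)\to\infty$ almost surely.

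For the matching upper bounds, fix $\e>0$ and put $M=(d-1)/(2\e)$. On the almost sure event $\{R_N(t)\to\infty\}$ there is a finite (random) time $u_0$ with $R_N(u)\ge M$, hence $R_n(u)\ge M$ for every $n$, for all $u\ge u_0$; thus on $[u_0,\infty)$ the displacement increments of the radial system are dominated by those of the standard one-dimensional system driven by the same $B_n$ plus a deterministic drift of rate $\e$. Comparing via the coupling above with the standard Brunet--Derrida system $\check X$ started at time $u_0$ from the configuration $R(u_0)$, we obtain $R_1(t)\le \check X_1(t)+\e(t-u_0)$ for $t\ge u_0$. Since $\check X_1(t)/t\to v_N$ almost surely --- this holds for an arbitrary initial configuration $\mathbf c=(c_1\ge\cdots\ge c_N)$ by squeezing $(c_N,\dots,c_N)\preceq\mathbf c\preceq(c_1,\dots,c_1)$ and applying Lemma \ref{L:1D_speed} with translation invariance --- we get $\limsup_t R_1(t)/t\le v_N+\e$ almost surely. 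Letting $\e\downarrow 0$ yields $\limsup_t R_1(t)/t\le v_N$, so $R_1(t)/t\to v_N$ almost surely; and since $R_N\le R_1$, also $\limsup_t R_N(t)/t\le v_N$, so $R_N(t)/t\to v_N$ almost surely. Combining the two gives $(R_1(t)-R_N(t))/t\to 0$ almost surely, which is the assertion.

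The main obstacle is the unboundedness of the Bessel drift near the origin: one cannot dominate the radial system from above by a standard one-dimensional system directly, so the argument must proceed in two stages --- first use nonnegativity of the drift to show the whole cloud escapes to $+\infty$, then use that escape to make the drift uniformly small and extract the upper bound. A secondary point requiring care is that the restart time $u_0$ is not a stopping time, so the convergence $\check X_1(t)/t\to v_N$ should be justified for all initial configurations simultaneously (e.g. by running the comparison system from each rational time and taking a countable intersection of full-measure events) rather than via the strong Markov property.
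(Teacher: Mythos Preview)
Your argument is correct and follows essentially the same route as the paper: sandwich the radial system between the standard one-dimensional Brunet--Derrida system (using nonnegativity of the Bessel drift) and a drifted version (using smallness of the drift once $R_N$ is large), then invoke Lemma~\ref{L:1D_speed}. The paper writes the upper comparison more tersely, defining a single drifted system $Y^\e$ from time $0$ and asserting $\limsup_t R_1(t)/t\le \limsup_t Y^\e_1(t)/t$ directly from $R_N(t)\to\infty$; your version, which restarts the comparison at a time after which $R_N\ge (d-1)/(2\e)$ and then handles the non-stopping-time issue by running the comparison from countably many deterministic times, is the same idea made explicit and is arguably more careful on exactly this point.
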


\begin{proof}
Given $(R_n(t), 1 \leq n \leq N)$ constructed in the usual manner and with \eqref{E:coup_cts_mod}, we define a family of one-dimensional Brunet--Derrida systems $(Y^\e_n(t), 1 \leq n \leq N)$ constructed in the same manner as $(R_n(t), 1 \leq n \leq N)$ with with the same $(J_i)$, $(K_i)$, but with the displacement step
\begin{equation}\label{E:coup_cts_drift}
 Y^\e_n(t) = Y^\e_n(J_{i - 1}) + W^\e_n(t - J_{i - 1}), \quad t \in [J_{i-1}, J_i),
\end{equation}
where $(W^\e_n(t), 1 \leq n \leq N)$ are independent Brownian motions in $\R$ with $\e$ drift. These processes satisfy the stochastic differential equation
\begin{equation}\label{E:BMdrift}
dW^\e(t) = dB(t) + \e \, dt.
\end{equation}
Suppose we couple the family $(Y^\e_n(t), 1 \leq n \leq N)$ to $(R_n(t), 1 \leq n \leq N)$ by using the same underlying $B(t)$ to drive the solutions to \eqref{E:bessel} and \eqref{E:BMdrift} for each $i$ and $n$. Then we see that under this coupling
\[
 \liminf_{t \to \infty} \frac{R_N(t)}t \geq \liminf_{t \to \infty} \frac{Y^0_N(t)}t.
\]
But $(Y^0_n(t), 1 \leq n \leq N)$ is a standard one-dimensional Brunet--Derrida system and by Lemma \ref{L:1D_speed},
\[
 \lim_{t \to \infty} \frac{Y^0_N(t)}t = v_N
\]
almost surely for some deterministic constant $v_N > 0$. Therefore, almost surely, $R_N(t) \to \infty$. So under this coupling, for every $\e > 0$,
\[
 \limsup_{t \to \infty} \frac{R_1(t)}t \leq \limsup_{t \to \infty} \frac{Y^\e_1(t)}t
\]
almost surely. However, we note that if the drift is a constant equal to $\eps$, then $Y_i^\eps(t) = Y_i^0(t) + \eps t$ for all $1\le i \le N$ and all $t \ge 0$, hence by Lemma \ref{L:1D_speed},
\[
 \lim_{t \to \infty} \frac{Y^\e_1(t)}t = v_N + \e,
\]
almost surely. Since $\e > 0$ was arbitrary, we have that almost surely
\[
 \lim_{t \to \infty} \frac{R_1(t)}t = \lim_{t \to \infty} \frac{R_N(t)}t = v_N.
\]
\end{proof}

Having established the asymptotic behaviour of for $R(t)$, we now turn our attention to $\Theta(t)$. The main idea here is that the time to the most recent common ancestor for all $N$ particles, can be naively dominated uniformly over all time. We shall formalise this statement with the following lemma.

\begin{lem}\label{L:geo_MRCA_bound}
Let $\tau(t)$ be the time to the most recent common ancestor for $X_1(t), \ldots, X_N(t)$. Then for all $t$ sufficiently large, $\tau(t) - 1$ is stochastically dominated by a geometric random variable of parameter $p$, where $p > 0$.
\end{lem}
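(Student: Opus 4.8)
The plan is to find a fixed-length time window (say of length $1$) during which, with probability bounded below by some $p > 0$ \emph{uniformly over the configuration at the start of the window}, all $N$ particles coalesce to a common ancestor, i.e. some single particle present at the start of the window becomes the ancestor of the entire population by the end. Once such a uniform-in-configuration lower bound is available, a standard iteration argument gives the geometric domination: partition $[0,\infty)$ (or rather $[1,\infty)$, to avoid the degeneracy at $0$ noted earlier in the proof) into consecutive unit intervals, and on each interval, conditionally on the past, the coalescence event succeeds with probability at least $p$; so the number of intervals one must wait until the first success is dominated by a geometric variable of parameter $p$, which bounds $\tau(t) - 1$ from above for all $t$ large enough.

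First I would isolate the favourable event on a generic unit window $[u, u+1]$. The $R$-coordinates of the particles live in $[R_N(u), R_1(u)]$. Using Lemma~\ref{L:modulus_R} (or rather the coupling behind it, comparing the Bessel system with a driftless one-dimensional Brunet--Derrida system and with branching Brownian motion via Lemma~\ref{L:maximum}), one can say that with probability bounded below, no particle moves further than some fixed amount during the window, so the whole cloud of $R$-values stays inside an interval of bounded length, say $[R_N(u), R_N(u) + M]$ for a deterministic $M$. On the complementary high-probability event I would then force coalescence directly: it suffices that over the window there be enough branching events, and that the successive offspring of one chosen particle $i_0$ stay above (in fitness) all other particles at every branching time, so that every removal kills a lineage not descended from $i_0$. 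Because the population size is exactly $N$ and each branching removes the current minimum, after at most $N-1$ suitably ``biased'' branchings every survivor descends from $i_0$. I would choose $i_0$ to be (a particle near) the maximum, and ask that during the window the Poisson clock rings at least $N-1$ times (probability bounded below, independent of everything) and that the Brownian increments driving $i_0$'s family keep it near the top while, say, all \emph{other} particles happen to drift down by a controlled amount — each of these is an event of probability bounded below by a constant depending only on $N$ and $M$, not on $u$ or on the configuration, by the Markov property and the fact that Brownian increments over a unit time have everywhere-positive density on the relevant bounded region. Intersecting these finitely many events (their number depending only on $N$) gives the uniform lower bound $p = p(N, M) > 0$.

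The main obstacle is making the ``uniform over all configurations'' claim genuinely uniform: a priori the diameter $R_1(u) - R_N(u)$ is unbounded, so the probability that a single branching keeps $i_0$'s offspring above a far-away competitor need not be bounded below. This is exactly why the first step — confining the cloud to a bounded interval with probability bounded below — is needed, and it is where I would lean on Lemma~\ref{L:maximum} and Corollary~\ref{C:diameter} (or Lemma~\ref{L:diameter}), noting that ``for all $t$ sufficiently large'' in the statement lets me first wait until the diameter has, with high probability, settled to its $O(\log N)$ equilibrium order, and then run the window argument with $M$ of that order. A secondary technical point is the bookkeeping that ``$N-1$ biased branchings force a common ancestor'': one must check that removing the minimum each time, on the event that $i_0$'s lineage is never the minimum, strictly decreases the number of distinct founding lineages still represented, so that $N-1$ such steps exhaust all lineages except $i_0$'s. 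Both points are routine once the confinement step is in place; I would present the confinement estimate carefully and treat the combinatorial step briefly.
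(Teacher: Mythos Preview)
Your overall architecture---unit windows, a configuration-uniform lower bound $p>0$ on one-step coalescence, then iterate for geometric domination---is exactly the paper's strategy. But the obstacle you flag is a phantom, and the fix you propose for it would not actually close.

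You worry that an unbounded diameter $R_1(u)-R_N(u)$ makes the one-step coalescence probability degenerate, and propose to first wait until the diameter has settled to $O(\log N)$. Two problems. First, the results you cite (Lemma~\ref{L:diameter}, Corollary~\ref{C:diameter}) give only $(\text{diameter})/t\to 0$ in probability for the one-dimensional system; they do not give an almost-sure eventual bound on the Euclidean-case diameter at \emph{every} integer time $s$, which is what a conditional estimate $\P(G_s\mid\mathcal F_s)\ge p$ for all large $s$ would require. Second, and more to the point, no diameter control is needed at all. The paper phrases the good event relative to the current maximum $R_1(s)$: let $A_s$ be the event that (in the coupled free BBM) the particle at $X_1(s)$ first branches with score $\ge R_1(s)+1$ and then produces at least $N$ offspring by time $s+1$, all staying above $R_1(s)+\tfrac12$; let $B_s$ be the event that the other $N-1$ particles neither branch nor move (in Euclidean norm) by more than $\tfrac12$ during $[s,s+1]$. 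Since $R_n(s)\le R_1(s)$ for $n\ge 2$, on $B_s$ every non-$X_1$ lineage stays below $R_1(s)+\tfrac12$, while on $A_s$ the $N$ offspring of $X_1(s)$ all stay above $R_1(s)+\tfrac12$; hence on $A_s\cap B_s$ the survivors at time $s+1$ are exactly those $N$ offspring. The event $B_s$ involves only Brownian increments and exponential clocks, so $\P(B_s\mid\mathcal F_s)$ is a fixed constant; $A_s$ depends on the configuration only through $R_1(s)$ via the Bessel drift, and is bounded below by its one-dimensional (zero-drift) analogue, which is configuration-free. The diameter never enters, and your ``$N-1$ biased branchings'' bookkeeping is also bypassed: one simply asks $X_1$ to produce $N$ offspring that dominate everyone else.
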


\begin{proof}
Let $s \geq 0$ be an integer, and consider the system $X_1(s), \ldots, X_N(s)$ at time $s$. We assume that the Brunet--Derrida system $(X_n(t), 1\le n \le N)_{t \ge s}$ is obtained from a free branching Brownian motion $(\bar X_n(t), 1\le n \le \bar N(t))_{t \ge 0}$ in the obvious manner. 
Let $A_s$ be the event that, for this free process, when the particle located at $X_1(s)$ at time $s$ first branches after time $s>0$, its score is $\geq R_1(s) + 1$, and it subsequently produces at least $N$ offspring by time $s + 1$ whose score always stays above $R_1(s) +1/2$. Let $B_s$ be the event that for the free process, the particles initially located at $X_2(s), \ldots, X_N(s)$ do not branch before time $s + 1$ and that
\begin{equation}\label{E:regen_event}
 \sup_{2 \leq n \leq N} \sup_{t \in [s,s+1]}  \| Y_n(t) - Y_n(s)\|  \leq 1/2,
\end{equation}
where $Y_n(t)$ is the location at time $t$ of the descendant of the particle located at $X_n(s) $ at time $s$. Note that $Y_n(s)$ is well-defined since $X_n(s)$ has a unique descendant for $2 \leq n \leq N$.

Note that $A_s$ and $B_s$ are independent events. Moreover, $B_s$ is independent of $X(s)$, so there exists $p_2>0$ such that
\[
 \P(B_s | \cF_s ) = \P(B_s) \ge p_2
\]
almost surely for all $s$, where $\cF_s$ denotes the filtration generated by the entire process up to time $s$. Likewise, $A_s$ given $R_1(s)$ is independent of $\cF_s$. To lose the dependence on $R_1(s)$, we use an analogous coupling as in the proof of Lemma \ref{L:modulus_R} where we stochastically bound $(R_n(t), 1 \leq n \leq N)$ from below by a standard one-dimensional Brunet--Derrida $(Y_n^0(t), 1 \leq n \leq N)$. We define the event $A'_s$ to the event that, for a \emph{one-dimensional} free branching Brownian motion, a particle located at $R_1(s)$ first branches after time $s > 0$, its score is $\geq R_1(s) + 1$, and it subsequently produces at least $N$ offspring by time $s + 1$ whose score always stays above $R_1(s) + 1/2$. We now have that $A'_s$ is independent of $R_1(s)$ and therefore $\cF_s$ and
\[
 \P(A_s |\cF_s) \geq \P(A'_s | \cF_s) = \P(A'_s).
\]
So there exists $p_1>0$ such that 
\[
 \P(A_s |\cF_s) \ge p_1
\]
almost surely for all $s$. We call $p = p_1 p_2>0$, and deduce from the above that if $G_s = A_s \cap B_s$, 
\[
 \P(G_s |\cF_s) \ge p
\] 
almost surely for all $s$. Note that when $A_s \cap B_s$ occurs, all the particles at time $s+1$ in the Brunet--Derrida system necessarily descend from the maximum particle at time $s$. Hence $\tau(s + 1) \leq 1$. 

Applying this argument iteratively, we deduce that
\[
 \P(\tau (t) > k) \leq \P(G_{t-k}^\complement \cap G_{t-k+1}^\complement,\cap \ldots \cap G_{t-1}^\complement)
\]
from which the result follows.
\end{proof}

With Lemma \ref{L:geo_MRCA_bound}, we are now in a position to complete the proof of \eqref{E:clump_mod} with the following lemma. Endow $\bb S^{d-1}$ with the usual spherical metric $D$: for $\Theta_1, \Theta_2 \in \bb S^{d-1}$, let $D(\Theta_1, \Theta_2)$ be the distance on the sphere. In $\R^d$,
\[
 D(\Theta_1, \Theta_2) = \cos^{-1} \< \Theta_1 , \Theta_2 \>.
\]

\begin{lem}\label{L:theta_clump}
For all $N > 1$,
\[
 \max_{1 \leq m,n \leq N} D(\Theta_m(t), \Theta_n(t)) \to 0
\]
as $t \to \infty$ almost surely.
\end{lem}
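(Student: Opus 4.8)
The plan is to combine the genealogical bound of Lemma \ref{L:geo_MRCA_bound} with the naive displacement bounds of Lemma \ref{L:maximum} together with the fact, established in Lemma \ref{L:modulus_R}, that $R_N(t) \to \infty$ almost surely. The intuition is simple: at time $t$, all $N$ particles share a common ancestor at time $t - \tau(t)$, and since $\tau(t)$ is stochastically dominated (for $t$ large) by $1$ plus a geometric random variable, with overwhelming probability $\tau(t)$ is at most of order $\log t$; over such a short time window a particle can travel at most a distance of order $\tau(t)$ in $\R^d$ by Lemma \ref{L:maximum} (applied to the free branching Brownian motion from which the Brunet--Derrida system is built). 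On the other hand the common ancestor is at distance $R(t - \tau(t))$ from the origin, which grows linearly. Since the spherical distance between two points at radius $\rho$ that are within Euclidean distance $\Delta$ of a common point is at most of order $\Delta/\rho$, we get $\max_{m,n} D(\Theta_m(t), \Theta_n(t)) = O(\tau(t)/R(t-\tau(t)))$, which tends to $0$.

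More precisely, first I would fix $\eta > 0$ and, using Lemma \ref{L:geo_MRCA_bound}, note that for all $t$ large enough $\P(\tau(t) > a\log t) \leq (1-p)^{a \log t - 1}$, which is summable along (say) integer times $t$ for $a$ large. By Borel--Cantelli, almost surely $\tau(t) \le a \log \lceil t \rceil$ for all large $t$; combined with a short interpolation over unit time intervals (controlling the extra displacement on $[\lfloor t\rfloor, t]$ again via Lemma \ref{L:maximum} and Borel--Cantelli), we get that almost surely $\tau(t) \le a'\log t$ eventually. Next, let $Y(s)$ denote the common ancestor at time $s := t - \tau(t)$ of all particles at time $t$, realised inside the driving free branching Brownian motion; then every $X_n(t)$ satisfies $\|X_n(t) - Y(s)\| \le \sup_{u \le \tau(t)} \|\bar X_1^{(n)}(s+u) - Y(s)\|$ where the right side is bounded, via the second estimate in Lemma \ref{L:maximum} applied coordinatewise (or to the modulus), by $\sqrt{2}\,\tau(t) + K_t$ with $K_t$ of order $\log t$ after another Borel--Cantelli step over the (at most $N$ times countably many) relevant particles and times. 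Hence almost surely $\max_n \|X_n(t) - Y(s)\| \le C\log t$ eventually, for a random constant $C$.

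Finally I would convert the Euclidean bound into a spherical one. Write $\rho = \|Y(s)\| = R(t-\tau(t))$; since $R_N$ (hence $R_N(t-\tau(t))$, since $\tau(t)/t \to 0$) tends to $\infty$ a.s. and all $R_n \ge R_N$, we have $\rho \to \infty$ a.s. For any unit vectors $\Theta_m(t), \Theta_n(t)$ we have $\|X_n(t) - X_m(t)\| \le 2C\log t$, and an elementary estimate gives $D(\Theta_m(t),\Theta_n(t)) \le \pi \|X_n(t)-X_m(t)\| / \min(R_n(t), R_m(t))$ once the points are far from $0$; since $R_n(t) \ge R_N(t) \ge \rho - C\log t \to \infty$, the right-hand side is $O(\log t / R_N(t))$, which tends to $0$ almost surely by Lemma \ref{L:modulus_R}. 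This proves the claim.

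The main obstacle I anticipate is the bookkeeping required to make the a.s. statement (as opposed to convergence in probability) rigorous: Lemma \ref{L:geo_MRCA_bound} gives control of $\tau(t)$ only at a fixed time, so one must interpolate over unit intervals and apply Borel--Cantelli carefully — both to the genealogical time and to the displacement of each of the finitely many lineages over the short ancestral window — and keep track of the fact that the relevant particles at different times are different, which is why one sums tail bounds that are uniform in the particle label. A secondary (minor) point is justifying the coupling/embedding of the Brunet--Derrida system into a free branching Brownian motion on each unit interval so that Lemma \ref{L:maximum} genuinely applies to the ancestral displacements; this is routine given the construction \eqref{E:coup_cts} but should be stated explicitly.
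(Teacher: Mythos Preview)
Your proposal is correct and follows essentially the same route as the paper: both bound the MRCA time $\tau(t)$ by $C\log t$ via Lemma~\ref{L:geo_MRCA_bound} and Borel--Cantelli, bound the displacement of all descendants from the common ancestor by $C'\log t$ via Lemma~\ref{L:maximum} (applied to the coupled free branching Brownian motion) and another Borel--Cantelli step, and then convert to a spherical bound of order $\log t / R_N(t-\tau)$ using the elementary inequality $D(\Theta_m,\Theta_n)\le \pi r/(2R)$ together with Lemma~\ref{L:modulus_R}. The only cosmetic difference is that the paper works directly with $\rho = \sup_{u\le\tau}\max_n\|X_n(t-\tau+u)-X_k(t-\tau)\|$ and the ancestor's radius $R_k(t-\tau)$, rather than passing through $\|X_n(t)-X_m(t)\|$ and $R_N(t)$ as you do; and the paper is somewhat more casual than you are about the interpolation needed to pass from integer-time Borel--Cantelli to an all-$t$ almost-sure statement.
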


\begin{proof}
Given two particles $X_m(s), X_n(t) \in \R^d$, let $r =  \|X_m(s) - X_n(t)\|$ and assume for now that $r\le R_n(t) = \|X_n(t)\|$.
Then a simple geometric argument (see Figure \ref{F:trig}) shows that the distance $D(\Theta_m(s), \Theta_n(t))$ is biggest if $X_m(s)$ is perpendicular to $X_m(s) - X_n(t)$. Hence for $r \leq R_n(t)$,
\begin{equation}\label{E:trig}
 D(\Theta_m(s), \Theta_n(t)) \leq \sin^{-1} \left( \frac r{R_n(t)} \right) \leq \frac{\pi r}{2 R_n(t)},
\end{equation}
since $\sin^{-1} (x) \leq \frac\pi2 x$ for all $0 \leq x \leq 1$.
\begin{figure}
\centering 
\begin{tikzpicture}
 \draw [fill] (-3.6,1.5) circle [radius=2pt] node [above] {$0$};
 \draw [fill] (0,0) circle [radius=2pt] node [below] {$X_n(t)$};
 \draw [dashed] (0,0) circle [radius=1.5];
 \draw [-] (0,0) -- (0,1.5) node [right] at (0,0.75) {$r$};
 \draw [fill] (0,1.5) circle [radius=2pt] node [above] {$X_m(s)$};
 \draw [-] (0,0) -- (-3.6,1.5) node [below left] at (-1.8,0.75) {$R_n(t)$};
 \draw [-] (0,1.5) -- (-3.6,1.5);
 \draw [-] (0,1.3) -- (-0.2,1.3) -- (-0.2,1.5);
 \draw [-] (-3,1.5) arc [radius=0.6, start angle=0, end angle=-22.6];
 \draw [-] (-3.3,1.45) -- (-3.6,1.15) node [below left] {$D(\Theta_m(s),\Theta_n(t))$};
\end{tikzpicture} \\
\caption{Proof of \eqref{E:trig}. The angle is maximised when the triangle formed by $0$, $X_m(s)$ and $X_n(t)$ is rectilinear.}
\label{F:trig}
\end{figure}
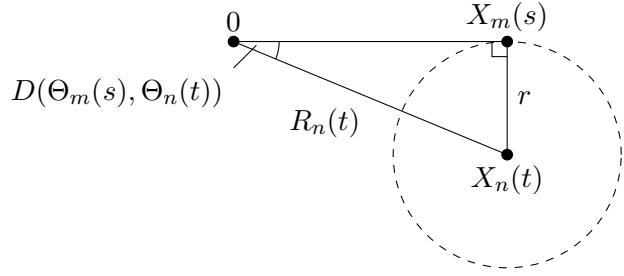

Given $0 < \Delta < t$ and let $\tau = \tau(t)$ be the time to the most recent common ancestor of all the surviving particles at time $t$. We define $\tau = \infty$ should there be no such ancestor. We first note that
\[
 \P(\tau \geq \Delta ) \leq (1- p)^\Delta
\]
where $p$ is as in Lemma \ref{L:geo_MRCA_bound}. Hence picking $\Delta = C_1 \log t$ for some sufficiently large $C_1 > 0$, and applying the first Borel--Cantelli lemma shows that there exists $T_1 > 0$, possibly random, such that almost surely, $\tau \leq \Delta$ for all $t > T_1$.

On the event $\{\tau \leq \Delta\}$, let $X_k(t - \tau)$ be the position of the most recent common ancestor of all the surviving particles at time $t$. Since $\sup D \leq \pi$, using \eqref{E:trig}, we have:
\begin{align}
  D(\Theta_m(t),\Theta_n(t)) & \leq D(\Theta_n(t),\Theta_k(t - \tau)) + D(\Theta_m(t),\Theta_k(t - \tau)) \notag \\
  & \leq \frac{\pi \rho}{R_k(t - \tau)} \bo 1 \{ \rho \leq R_k(t - \tau) \} + \pi \bo 1 \{ \rho > R_k(t - \tau) \} \notag \\
  & \leq \frac{\pi \rho}{R_k(t - \tau)}, \label{E:cond_theta_bound}
\end{align}
where $\rho = \sup_{u \leq \tau} \max_{1 \leq n \leq N} \|X_n(t - \tau + u) - X_k(t - \tau)\|$, which on the event $\{\tau \leq \Delta\}$ can be dominated by
\[
 \rho \prec \sup_{s \leq \Delta} \sup_n \| \bar Z_n(s) \|,
\]
where $(\bar Z_n(u), 1 \leq n \leq \bar N(u))$ is a $d$-dimensional branching Brownian motion started from one particle at $0$. Writing for all $n$ and $u$,
\[
 \bar Z_n(u) = (\bar Z^{(1)}_n(u), \ldots, \bar Z^{(d)}_n(u)) \in \R^d,
\]
we have that $\bar Z^{(1)}_n(s), \ldots, \bar Z^{(d)}_n(s)$ are one-dimensional branching Brownian motions and so
\begin{align}
 \P( \rho > \sqrt{2} d \Delta + C_2 d \log t, \ \tau \leq \Delta) & \leq 2d \P(\sup_{s \leq \Delta} \sup_n \bar Z^{(1)}_n(s) > \sqrt{2} \Delta + C_2 \log t) \notag \\
  & \leq 4de^{-\sqrt2 C_2 \log t}, \label{E:rho_tail}
\end{align}
where \eqref{E:rho_tail} follows by Lemma \ref{L:maximum}. This is also summable for sufficiently large $C_2$, hence we deduce that almost surely there exists $T_2 > 0$, possibly random, such that if $t > T_2$ then $\tau \leq C_1 \log t$ and $\rho \leq C_3 \log t$ where $C_3 = \sqrt2 d C_1 + d C_2$.

Then for $t > T_2$, applying \eqref{E:cond_theta_bound} and Lemma \ref{L:modulus_R}, there exists some $C_4 > 0$ such that
\[
 D(\Theta_m(t),\Theta_n(t)) \leq \frac{\pi C_3 \log t}{R_k(t - \tau)} \leq \frac{C_4 \log t}{v_N (t - C_1 \log t)}.
\]
The right hand side tends to 0 as $t \to \infty$ uniformly over $m,n$, so almost surely
\[
 \sup_{1 \leq m,n \leq N} D(\Theta_m(t), \Theta_n(t)) \to 0,
\]
as desired.
\end{proof}

Note that by Lemma \ref{L:geo_MRCA_bound}, the system eventually has a unique most recent common ancestor. We also observed that almost surely, the time of the most recent common ancestor $t - \tau(t) \to \infty$ as $t \to \infty$. If we consider the genealogical path of the most recent common ancestor, we see that there is a unique immortal genealogical path in the system, or the `spine', from which all the particles that are eventually ever alive in the system descend from.

Let $X_{*}(t)$ be the particle of the spine at time $t$ and take the usual decomposition: $X_{*}(t) = R_{*}(t)\Theta_{*}(t)$ where $R_{*}(t) > 0$ and $\Theta_{*}(t) \in \bb S^{d-1}$ is continuous. We now complete the proof of Theorem \ref{T:modulus} by showing the angular part of the spine converges.

\begin{prop}\label{P:theta_conv}
For all $N > 1$, $\Theta_{*}(t)$ converges almost surely as $t \to \infty$.
\end{prop}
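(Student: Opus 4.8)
The plan is to show that $\Theta_*(t)$ is a Cauchy sequence almost surely by controlling its total displacement over long time intervals. The key point is that the spine particle $X_*(t)$ has modulus $R_*(t)$ growing linearly (since $R_*(t) \ge R_N(t) \to \infty$ at speed $v_N$ by Lemma \ref{L:modulus_R}), while its Euclidean displacement over a unit time interval is at most of order $\log t$ with probability summable in $t$. Geometrically, a displacement of size $r$ at radius $R$ changes the angular coordinate by at most $\pi r / (2R)$ by the same trigonometric estimate \eqref{E:trig} used in Lemma \ref{L:theta_clump}.

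First I would set up the discretisation. For integer $s \ge 0$, let $\Delta\Theta_s = D(\Theta_*(s+1), \Theta_*(s))$ and, more precisely, bound the oscillation $\sup_{u \in [s,s+1]} D(\Theta_*(s+u), \Theta_*(s))$. Using the history of the free branching Brownian motion from which the Brunet--Derrida system is built, and the many-to-one / maximum estimates of Lemma \ref{L:maximum} applied coordinatewise as in \eqref{E:rho_tail}, I would show there is a constant $C$ such that
\[
 \P\Bigl( \sup_{u \in [0,1]} \|X_*(s+u) - X_*(s)\| > C\log s \Bigr) \le s^{-2}
\]
for all large $s$ (the spine particle is one of the $N$ particles alive, so its displacement is dominated by the running maximum of the driving branching Brownian motion over a unit interval, which is $O(\log s)$ with polynomially small failure probability; any constant $>0$ bound would actually suffice here, but the $\log s$ bound is what comes for free and is summable). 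By Borel--Cantelli, almost surely $\sup_{u\in[0,1]}\|X_*(s+u)-X_*(s)\| \le C\log s$ for all $s \ge T_3$ for some random $T_3$.

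Next I combine this with the linear growth of $R_*(t)$. On the event above, and using $R_*(t) \ge R_N(t)$ together with Lemma \ref{L:modulus_R} (so $R_N(t) \ge v_N t/2$ for all large $t$ almost surely), the trigonometric bound \eqref{E:trig} gives, for $s \ge T_4$ (a larger random time),
\[
 \sup_{u \in [0,1]} D(\Theta_*(s+u), \Theta_*(s)) \le \frac{\pi C \log s}{2 R_N(s)} \le \frac{C' \log s}{s}.
\]
Hence for integers $m < n$ both at least $T_4$,
\[
 D(\Theta_*(n), \Theta_*(m)) \le \sum_{s=m}^{n-1} \frac{C' \log s}{s},
\]
and since $\sum_s (\log s)/s^2$... — wait, I must be careful: $\sum (\log s)/s$ diverges. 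This is the main obstacle and it shows the crude $\log s$-per-unit-displacement bound is not enough; I need the displacement bound to be summable against $1/R_*(s) \asymp 1/s$, i.e. I need $\sum_s (\text{displacement at step } s)/s < \infty$. The resolution is that the \emph{typical} displacement of the spine particle over a unit interval is $O(1)$, not $O(\log s)$: with overwhelming probability the spine does not coincide with a record-breaking particle. More precisely, I would show $\E[\sup_{u\in[0,1]}\|X_*(s+u)-X_*(s)\|] \le C$ uniformly in $s$ (the spine is a single particle; conditionally its increment over a unit interval is a Brownian-type increment plus the effect of at most finitely many branchings on average, all with bounded expected contribution), so that $\sum_s \E[\text{displacement}_s]/s = \infty$ still — no. The correct fix: use that $\sum_s \P(\text{displacement}_s > \e) $ decays, but we need the second moment. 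Let me instead argue via a direct Borel--Cantelli on the angular increments themselves: show $\P(D(\Theta_*(s+1),\Theta_*(s)) > s^{-3/4}) \le \P(\|X_*(s+1)-X_*(s)\| > c\, s^{1/4}) \le C e^{-c' s^{1/2}}$ using Lemma \ref{L:maximum} with $K \asymp s^{1/4}$ and the linear lower bound on $R_*(s)$; this is summable, so almost surely $D(\Theta_*(s+1),\Theta_*(s)) \le s^{-3/4}$ eventually, and $\sum_s s^{-3/4}$ still diverges — so I take the threshold $s^{-1-\e}$ instead: $\P(\|X_*(s+1)-X_*(s)\| > v_N s/2 \cdot s^{-1-\e}) $ is not small. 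So the honest statement is: I need the displacement to be $o(s^{-\e})$ summably, i.e. displacement $\le s^{1-\e}$, and indeed $\P(\sup_{u\in[0,1]}\|X_*(s+u)-X_*(s)\| > s^{1/2}) \le C e^{-c s^{1/2}}$ by Lemma \ref{L:maximum}, giving angular increment $\le C s^{1/2}/(v_N s/2) = C'' s^{-1/2}$ — and $\sum s^{-1/2}$ diverges. The genuine conclusion: the series $\sum_s (\text{angular osc over } [s,s+1])$ converges only if per-step displacement is summable against $1/s$, which forces a bound like displacement $\le s^{\e}$ with the \emph{probabilities} summable — which Lemma \ref{L:maximum} does \emph{not} give, since a unit-time displacement is genuinely $\Theta(\log s)$ in the maximum over the cloud.

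Therefore the real plan must avoid summing raw displacements. Instead I would argue as follows, which I believe is the intended route: the spine particle at large times is the MRCA found in Lemma \ref{L:geo_MRCA_bound}, and between consecutive ``regeneration'' times (times $s$ where the event $G_s$ of that lemma occurs, which happen with positive probability per unit time and hence with bounded gaps infinitely often — indeed the gaps are geometric) the spine is pinned down. Fix a late regeneration time; the angular position at that time, call it $\Theta_*^{(k)}$ at the $k$-th regeneration time $\sigma_k$, changes between $\sigma_k$ and $\sigma_{k+1}$ by at most $\pi \rho_k / R_N(\sigma_k)$ where $\rho_k$ is the diameter of the cloud's excursion over $[\sigma_k, \sigma_{k+1}]$; since $\sigma_{k+1}-\sigma_k$ is geometric (tails $\le (1-p)^j$), $\rho_k \le C \sqrt 2 d (\sigma_{k+1}-\sigma_k) + \text{(log-correction)}$ by Lemma \ref{L:maximum}, and $R_N(\sigma_k) \ge v_N \sigma_k /2$, while $\sigma_k \ge c k$ for large $k$ by the law of large numbers for the regeneration gaps. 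Hence $D(\Theta_*^{(k+1)}, \Theta_*^{(k)}) \le \frac{C(\sigma_{k+1}-\sigma_k) + C\log\sigma_k}{v_N c k /2}$, and since $\E[\sigma_{k+1}-\sigma_k] < \infty$ with exponential tails, a Borel--Cantelli / Kronecker argument shows $\sum_k \frac{\sigma_{k+1}-\sigma_k}{k}$ converges almost surely (because $\sum_k \frac{\E[\sigma_{k+1}-\sigma_k \wedge k^{1/2}]}{k}$ converges and the truncation errors are summable by the exponential tail), and $\sum_k \frac{\log k}{k}$... diverges — so I truncate the log-correction too, which is fine since $\P(\rho_k$ has a log-correction larger than $\e (\sigma_{k+1}-\sigma_k)) $ is small. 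The upshot: $(\Theta_*^{(k)})_k$ is almost surely Cauchy, and since within each interval $[\sigma_k,\sigma_{k+1}]$ the angular oscillation is bounded by the same quantity which tends to $0$, $\Theta_*(t)$ converges. I expect the delicate bookkeeping of these truncated sums — making precise that $\sum_k \rho_k/k < \infty$ a.s.\ given the exponential tail on $\rho_k$ uniform in $k$ — to be the main technical obstacle, handled by a standard three-series-type argument.
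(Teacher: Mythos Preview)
Your approach has a genuine gap that no amount of bookkeeping will close: you are trying to show that $\Theta_*(t)$ has \emph{finite total variation}, i.e.\ that $\sum_s D(\Theta_*(s+1),\Theta_*(s))<\infty$, but this is false. The spine is (after a time change) a Brownian motion on $\mathbb{S}^{d-1}$, and Brownian paths have infinite variation. Concretely, the angular increment over $[s,s+1]$ is of order $\rho_s/R_*(s)\asymp 1/s$ with $\rho_s$ of bounded mean, so $\sum_s \E[\rho_s/R_*(s)]$ is a harmonic-type series and diverges. Your final regeneration argument does not escape this: with $\sigma_{k+1}-\sigma_k$ i.i.d.\ of positive mean and $\sigma_k\sim ck$, the series $\sum_k(\sigma_{k+1}-\sigma_k)/k$ diverges almost surely (its partial sums grow like $\mu\log n$); your truncation claim is wrong because $\E[(\sigma_{k+1}-\sigma_k)\wedge k^{1/2}]\to\mu>0$, so the truncated series still diverges.

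The missing idea is that one must exploit a \emph{quadratic} rather than linear decay in $R_*$. The paper gives two routes. The first uses the skew-product decomposition: $\Theta_*(t)=\Phi(H_*(t))$ for a spherical Brownian motion $\Phi$ and clock $H_*(t)=\int_0^t R_*(s)^{-2}\,ds$; since $R_*(s)\sim v_N s$, this integral \emph{converges} (the integrand is $\asymp s^{-2}$), so $\Theta_*(t)\to\Phi(H_*(\infty))$. The second route shows that $\theta_*(t)$ is a \emph{martingale} (via a reflection symmetry) with respect to a filtration containing all radial information; then the orthogonality of martingale increments gives $\E[\theta_*(t)^2]=\sum_s\E[(\theta_*(s+1)-\theta_*(s))^2]\le C\sum_s 1/s^2<\infty$, and $L^2$-boundedness yields convergence. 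In both cases it is the square $1/R_*^2$, not $1/R_*$, that makes the relevant series converge --- and it is precisely this squaring (via the clock or via the quadratic variation) that your total-variation approach cannot access.
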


We offer two proofs of this proposition. One is shorter but relies explicitly on stochastic calculus, and hence works only for the exact situation described in this paper. On the other hand the second proof is a bit longer but more robust; in particular it carries over to slightly more general Brunet--Derrida particle systems than the ones we consider in this paper: see Remark \ref{R:gen}. 

\begin{proof}[First proof of Proposition \ref{P:theta_conv}] We start by recalling the classical \emph{skew-product decomposition} of Brownian motion (see for example Section 7.15 of \cite{IM}). The version we present here is Theorem 1.1(d) of \cite{RVY}.

Let $(X(t), t \geq 0)$ be a $d$-dimension Brownian motion, and write $X(t) = R(t) \Theta(t)$ with $R(t) > 0$ and $\Theta(t) \in \mathbb{S}^{d-1}$ and $R, \Theta$ continuous. Let
\begin{equation}\label{timechange}
H_t = \int_0^t R(s)^{-2} \, ds.
\end{equation}
Then,
\begin{enumerate}[(i)]
 \item $(R(t), t \geq 0)$ is a Bessel process of order $d$.
 \item Under the time change $\Phi(H_t) = \Theta(t)$, $(\Phi(t), t \geq 0)$ is a Brownian motion on $\bb S^{d-1}$.
 \item $(\Phi(t), t \geq 0)$ is independent of $(R(t), t \geq 0)$.
\end{enumerate}
In the special case of $d = 2$, we can write $\Theta(t)$ as $e^{iB(H_t)}$ where $(B(t), t \geq 0)$ is a standard Brownian motion in $\R$ indepedent of $(R(t), t \geq 0)$. We note here that in this case $\Phi(t) = e^{iB(t)}$.

Now consider the system $(\bar X(t), t \geq 0)$ that results from not enforcing selection: the underlying free $d$-dimensional branching Brownian motions started from $N$ particles at $X_1(0), \ldots, X_N(0)$ coupled to the Brunet--Derrida system. It is clear that this can be constructed by considering the skew product decomposition of every Brownian path in the system $ X_i(t) =  R_i(t) \bar \Theta_i(t) = \bar R_i(t) \bar \Phi_i(H_i(t))$. This is a bit cumbersome, but here are the details.

Let $\cT$ be the underlying branching tree (which by assumption is just an ordinary Yule process). We use Neveu's formalism for binary trees, i.e., $\cT$ is a set of vertices given by $\cT = \cup_{n=0}^{\infty} \{0,1\}^n$ and each vertex $v$ has attached to it an independent exponential random variable of mean 1, $X_v$, representing the lifetime of this individual. We call $[s_v, t_v]$ the interval of time over which this particle is alive, thus $t_v - s_v = X_v$ and so $s_v = \sum_{w\preceq v} X_w$ (with $w\preceq v$ means $w$ is ancestor of $v$). We also attach to each $v$ a Bessel process $R_v(t)$ defined over the interval of time $[s_v, t_v]$ in the natural way, by solving the SDE
$$
dR_v(t) = dB_v(t) + \frac{d-1}{2 R_v(t)}dt, t \in [s_v, t_v]
$$
where the Brownian motions $B_v$ are independent for different vertices $v$, and by requiring continuity of the resulting Bessel process when we move up along the branches of the tree. We extend the definition of $R_v(t)$ to the entire interval $[0, t_v]$ simply by defining $R_v(s)= R_w(s)$ where $w$ is the unique ancestor of $v$ alive at time $s$ (i.e., such that $s \in [s_w, t_w]$).

We further enrich this structure by associating to each vertex $v$ an angle process $\Theta_v(t)$, also defined over the interval of time $[s_v, t_v]$, which is defined by applying the construction \eqref{timechange} in between two successive branching events. More precisely, let $$
H_v(t) = \int_{0}^{t} R_v(s)^{-2} ds,$$
let $s'_v = H_v(s_v)$ and $t'_v = H_v(t_v)$. Consider a family of Brownian motions on $(\Phi_v(t), t \in [s'_v, t'_v], v \in \cT)$ on $\mathbb{S}^{d-1}$ such that the evolution of $\Phi_v$ over $[s'_v, t'_v]$ are independent for different vertices $v \in \cT$. As above we extend $\Phi_v(t)$ to the interval $[0, t'_v]$ by defining $\Phi_v(t) = \Phi_w(t)$ where $w$ is the unique ancestor of $v$ such that $t \in [s'_v, t'_v]$, and we have chosen $\Phi_v$ so that $\Phi_v(t)$ is a continuous function of $t$ over $[0, t'_v]$ for all $v \in \cT$. We now define $\Theta_v$ by the formula $$\Theta_v( t) = \Phi_v(H_v(t)),$$ for $s_v \le t \le t_v$. 

Let $S(t)$ be the set of particles alive at time $t$, i.e., the set of vertices $v\in \cT$ such that $t \in [s_v, t_v]$. Let $N(t) = |S(t)|$ and order the vertices in $S(t)$ by $v_1, \ldots , v_{ N(t)}$ in such a way that $R_1(t) \ge R_2(t) \ge \ldots$, where $R_i(t) = R_{v_i}(t)$. Also let $\Theta_i(t) = \Theta_{v_i}(t)$ for $1\le i \le  N(t)$. Then our system of branching Brownian motion then consists of 
$$ X_i(t) =  R_i(t) \Theta_i(t), 1\le i \le N(t), t \ge 0.$$

Having described the skew product decomposition of a free branching Brownian motion $( X_i(t), t \ge 0, 1 \le i \le N(t))$, we proceed with the proof of Proposition \ref{P:theta_conv}. By a \emph{ray} we mean a sequence $V = \{v_1, v_2, \ldots\} $ such that $v_n$ is in generation $n$ of the tree and $v_n \preceq v_{n+1}$ for all $n\ge 0$. For each given ray $V$, we can follow the trajectory $X_V(t)$ of the Brownian motion associated with $V$, that is, $X_V(t) = X_v(t)$ for the a.s. unique $v \in V$ such that $t \in [s_v, t_v]$. We can also consider $R_V(t) = R_v(t)$ its radial part and $\Theta_V(t) = \Theta_v(t)$ its angular part. Observe then that we have, by construction, $\Theta_V(t) = \Phi_V(H_V(t))$ where $\Phi_V$ is a Brownian motion on $\mathbb{S}^{d-1}$ and $H_V(t) = \int_0^t  R_V(s)^{-2} \, ds$. 

Now, consider the set $\mathcal V$ of rays $V$ such that  
$$
\liminf_{t\to \infty} \frac{\| X_V(t) \| }{t} \ge v_N /2.
$$ 
If $V \in \mathcal{V}$, $H_V(t) = \int_0^t  R_V(s)^{-2} \, ds$ converges almost surely as $t \to \infty$ to a limit $H_V(\infty)$. Hence $\Theta_V(t)$ converges as $t \to \infty$ to $ \Phi_V(H_V(\infty))$.
By Lemma \ref{L:modulus_R}, $X_{*}(t)$ almost surely is such a path in $\mathcal V$ and so $\Theta_{*}(t)$ converges as $t \to \infty$ to a limit. 
\end{proof}

\begin{proof}[Second proof of Proposition \ref{P:theta_conv}]
Our second proof relies on a suitable martingale argument rather than stochastic calculus, and hence is more robust. See Remark \ref{R:gen} for a discussion of the setups to which it carries.  Consider a free branching Brownian motion $\bar X = (\bar X_i(t), 1\le i \le \bar N(t), t\ge 0)$, and write $X_i(t) = R_i(t) \Theta_i(t)$ for $t\ge 0$ and $1\le i \le \bar N(t)$. Let $\mathcal F^R_t = \sigma(R_i(u), 1 \le i \le \bar N(u), u \leq t)$ and let $\mathcal F^\Theta_t = \sigma(\Theta_i(u), 1 \le i \le \bar N(u), u \le t)$. Let $\mathcal G_t = \sigma(\mathcal F^R_\infty \cup \mathcal F^\Theta_t)$, and note that $(\theta_*(t), t \ge 0)$ is adapted to the filtration $(\cG_s,s \ge 0)$. 

 We start by explaining the argument in the case $d=2$, which is a bit simpler to describe. Recall in the case $d = 2$, we can write $X_{*}(t) = R_{*}(t) e^{i \theta_{*}(t)}$, where $R_{*}(t) >0$ and $\theta_{*}(t)$ is a continuous function. This way of writing $X_{*}(t)$ is unique modulo a global constant multiple of $2\pi$ in $\theta_{*}(t)$, which we fix once and for all at time 0. 
 
\begin{lem}
$(\theta_{*}(t), t \ge 0)$ is a martingale with respect to $(\cG_t, t \ge 0)$. 
\end{lem}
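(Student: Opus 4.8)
The plan is to condition on all of the radial data and then recognise $\theta_{*}$ as a time--changed Brownian motion. The key structural observation is that, since the score function $s(x)=\|x\|$ depends on $x$ only through its modulus, the selection mechanism (which particle is deleted at each branching event) is a measurable function of the radial motions and the underlying branching tree alone. Hence the genealogy of the Brunet--Derrida system, and in particular the identity of the spine $V_{*}$ and the radial trajectory $R_{*}(\cdot)$ along it, is measurable with respect to $\mathcal F^{R}_{\infty}$ (we enrich $\mathcal F^{R}$ with the combinatorics of the Yule tree $\mathcal T$; since the spherical Brownian motions attached to the vertices of $\mathcal T$ are independent of $(\mathcal T,\{R_{v}\})$, this does not affect their conditional law). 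Once the spine is fixed, $X_{*}(\cdot)$ is an ordinary planar Brownian motion, so by the skew--product decomposition recalled in the first proof we may write, with the branch of $\theta_{*}$ pinned at time $0$ as in the statement,
\[
 \theta_{*}(t)=\theta_{*}(0)+\beta( H_{*}(t)), \qquad H_{*}(t)=\int_{0}^{t}R_{*}(s)^{-2}\,ds,
\]
where $H_{*}$ is continuous, non--decreasing and $\mathcal F^{R}_{\infty}$--measurable, and $(\beta(u),u\ge0)$ is a standard one--dimensional Brownian motion (the real lift of $\Phi_{V_{*}}$) started at $0$ and independent of $\mathcal F^{R}_{\infty}$.

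Next I would condition on $\mathcal F^{R}_{\infty}$. Then $V_{*}$ and the function $H_{*}(\cdot)$ become deterministic, and since $R_{*}$ is continuous and strictly positive on the compact interval $[0,t]$ (planar Brownian motion never returns to $0$), $H_{*}(t)<\infty$ almost surely. Thus, conditionally on $\mathcal F^{R}_{\infty}$, $t\mapsto\beta(H_{*}(t))$ is a Brownian motion run along a deterministic clock, hence a continuous martingale in its own filtration with bracket $H_{*}(t)$. Moreover the increment of $\beta$ past level $H_{*}(s)$ is independent of $\mathcal F^{\Theta}_{s}$: the angular Brownian motions of distinct vertices of $\mathcal T$ are independent, and any vertex alive at time $s$ has split off the spine strictly before time $s$, so its angular path up to time $s$ is a measurable function of $\beta$ restricted to $[0,H_{*}(s)]$. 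Consequently $\Ex[\,\theta_{*}(t)-\theta_{*}(s)\mid\mathcal G_{s}\,]=0$ for all $s\le t$, which, together with the $(\mathcal G_{t})$--adaptedness of $\theta_{*}$ already noted, gives the asserted martingale property (via the tower property, once integrability is in hand).

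The one delicate point is integrability: it is not obvious that $\Ex\sqrt{H_{*}(t)}<\infty$, because $R_{*}$ can come arbitrarily close to $0$ on $[0,t]$. I would sidestep this with the stopping times $T_{k}=\inf\{t\ge0:R_{*}(t)\le 1/k\}$, for which $H_{*}(t\wedge T_{k})\le k^{2}t$; since planar Brownian motion never hits $0$ and, by Lemma~\ref{L:modulus_R}, $R_{*}(t)\to\infty$, we get $T_{k}\uparrow\infty$ almost surely, so that $\theta_{*}$ is at worst a continuous $(\mathcal G_{t})$--local martingale, with each $\theta_{*}^{T_{k}}$ a genuine $L^{2}$ martingale. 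This already suffices for the proof of Proposition~\ref{P:theta_conv}: Lemma~\ref{L:modulus_R} gives $R_{*}(s)\sim v_{N}s$, hence $H_{*}(\infty)<\infty$ almost surely, and a Dambis--Dubins--Schwarz representation $\theta_{*}(t)=\theta_{*}(0)+W(H_{*}(t))$ with $W$ a Brownian motion forces convergence of $\theta_{*}(t)$ as $t\to\infty$. If a bona fide martingale is wanted, $\Ex\sqrt{H_{*}(t)}<\infty$ can be extracted from the lower bound on $R_{*}$ coming from the coupling with the bottom particle of a standard one--dimensional Brunet--Derrida system. The genuine obstacle throughout is precisely this control of the time change near the instants where $R_{*}$ is small; the remainder is bookkeeping on the skew--product picture.
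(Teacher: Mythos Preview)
Your argument is essentially correct but takes a quite different route from the paper. The paper's proof of this lemma is a pure symmetry argument: at time $s$, reflect every descendant of $X_{*}(s)$ across the line $\mathbb{R}X_{*}(s)$. This reflection preserves the law of branching Brownian motion, preserves all moduli (hence all selection decisions and the identity of the spine), and sends $\theta_{*}(t)-\theta_{*}(s)$ to its negative. Thus the conditional law of the increment given $\mathcal G_{s}$ is symmetric about $0$, and the martingale property follows immediately (modulo integrability, which the paper simply asserts). Your approach instead conditions on $\mathcal F^{R}_{\infty}$ and invokes the skew-product decomposition to recognise $\theta_{*}$ as a time-changed Brownian motion along the spine.

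Two remarks on the comparison. First, your route reuses exactly the stochastic-calculus machinery that the paper's \emph{second} proof is designed to avoid; the reflection argument is meant to be more robust (it carries over, e.g., to random-walk displacements, cf.\ Remark~\ref{R:gen}), whereas skew-product is specific to Brownian motion. So while your proof is valid, it somewhat defeats the purpose of having a separate lemma here rather than just running the first proof. Second, there is a small slip in your independence argument: the angular path up to time $s$ of an \emph{off-spine} particle alive at time $s$ is not a measurable function of $\beta|_{[0,H_{*}(s)]}$ alone --- after it splits from the spine at some time $u<s$ it carries its own independent angular Brownian motion. The conclusion you want still holds, since those extra angular motions are independent of $\beta$ altogether; you should just say that $\mathcal F^{\Theta}_{s}$ is generated (given $\mathcal F^{R}_{\infty}$) by $\beta|_{[0,H_{*}(s)]}$ together with angular Brownian motions independent of $\beta$. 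Your treatment of integrability via localisation is more careful than the paper's, which leaves it as an exercise.
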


\begin{proof}
It is a simple exercise left to the reader to check that $\theta_*(s)$ is integrable. Let $s>0$ and suppose at time $s$ there is a particle at position $z$ in this process, say $X_i(s) =z$. Consider the transformation $T= T_z$ which is a reflection in the line $\R z$: 
\[
 T_z(x) =2\< x, z'\> z' - x, x \in \R^d
\]
where $z' = z / \| z\|$. Note that $T_z$ is an orthogonal transformation and hence Wiener is invariant under $T_z$. We apply $T_z$ to every descendant of the particle $X_i(s)$, and call $T(\bar X) = (T(\bar X_i(t) ), 1\le i \le \bar N(t), t\ge s)$ the resulting  transformation of all the particles in the branching Brownian motion. We note that since each $T_z$ leaves Brownian motion invariant, $T(\bar X)$ has also the law of a free branching Brownian motion. Moreover, $T_z$ is an isometry so we have $\|T(\bar X_i(t))\| = R_i(t)$ for all $t \ge 0$ and all $1\le i \le \bar N(t)$. In particular, a particle $T(\bar X_i(t))$ survives the selection procedure if and only if its mirror image $\bar X_i(t)$ does. In particular, the branching times and tree structure of the system are invariant under $T$.

These two properties imply that, conditional on $\mathcal G_s$, $T(\bar X)$ has the same distribution as $\bar X$. On the other hand, observe that if $\bar X(t)$ has a particle at $x$ descending from a particle at $z$ at time $s$, then
$$
\arg T(x) = \arg T_z(x) = 2\arg z - \arg x.
$$
Applying this to $z = X_{*}(s)$ and $x = X_{*}(t)$ shows that
\[
 \Ex[\theta_*(t) - \theta_*(s) | \cG_s] = \Ex[\theta_*(s) - \theta_*(t)|  \cG_s] = 0,
\]
as desired. \end{proof}

When $d \geq 3$, it is necessary to first project onto a two-dimensional subspace $\Pi$ before applying a similar reasoning. 
Let $\Pi$ be a given such plane and let $p_\Pi$ be the orthogonal projection onto $\Pi$. For some fixed $e \in \Pi$ and $x \in \R^d$, define $\arg_\Pi(x)$ to be the continuous directed angle between $p_\Pi(x)$ and $e$.

\begin{lem}
$(\arg_{\Pi}(X_{*}(t)), t \ge 0)$ is a martingale with respect to $(\cG_t, t \ge 0)$. 
\end{lem}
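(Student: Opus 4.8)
The plan is to adapt the proof of the preceding lemma, replacing the reflection in a line through the origin by a reflection that lives entirely in the plane $\Pi$. Fix $s>0$, write $z=X_*(s)$ and $\ell_0=p_\Pi(z)$; since $p_\Pi(X_*(\cdot))$ follows the increments of a planar Brownian motion it avoids $0$ at all positive times, so $\ell_0\neq 0$ almost surely and $\arg_\Pi(X_*(t))$ is a well-defined continuous process for $t>0$ (the instant $t=0$ is handled by the time-shift already used in the proof of Theorem \ref{T:modulus}). First I would record that $\arg_\Pi(X_*(t))$ is integrable, which is checked exactly as in the case $d=2$: it amounts to the integrability of the winding angle of a planar Brownian motion run for a fixed time.

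Next I would introduce the orthogonal transformation $T=T_{z,\Pi}$ of $\R^d$ that acts on $\Pi$ as the reflection across the line $\R\ell_0$ and on $\Pi^\perp$ as the identity. This $T$ is an isometry of $\R^d$, it fixes $z$, and for any $x\in\R^d$ one has that $p_\Pi(Tx)$ is the reflection of $p_\Pi(x)$ across $\R\ell_0$ inside $\Pi$, so that $\arg_\Pi(Tx)=2\arg_\Pi(z)-\arg_\Pi(x)$ — an identity which, read for the continuous lifts along a path issued from $z$, is legitimate because $Tz=z$. As in the $d=2$ argument I would then apply $T$ to every descendant of the spine particle $z=X_*(s)$ from time $s$ onwards, obtaining a process $T(\bar X)$. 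Being orthogonal and fixing $z$, $T$ turns the free branching Brownian motion into a free branching Brownian motion; being an isometry it preserves every radial part, hence — since selection in case A depends on radial parts only — it preserves the genealogical tree, the branching times, the set of surviving particles and therefore the spine, the spine of $T(\bar X)$ at time $t\ge s$ being precisely $T(X_*(t))$. Since $T$ is $\cG_s$-measurable and changes neither the radial coordinates nor the angular coordinates up to time $s$, I would conclude that conditionally on $\cG_s$ the process $T(\bar X)$ has the same law as $\bar X$; plugging $x=X_*(t)$ into the reflection identity then gives $\Ex[\arg_\Pi(X_*(t))\mid\cG_s]=\Ex[2\arg_\Pi(X_*(s))-\arg_\Pi(X_*(t))\mid\cG_s]$, and as $\arg_\Pi(X_*(s))$ is $\cG_s$-measurable this is exactly the claimed martingale property.

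The delicate step — as in the $d=2$ lemma — is the assertion that conditioning on $\cG_s$ preserves the equality in law, since $\cG_s$ contains the radial parts at \emph{all} times, so the future Brownian increments are not independent of $\cG_s$. The way I would make this rigorous is to condition first on $\cF^R_\infty$: by the skew-product description used in the first proof of Proposition \ref{P:theta_conv}, applied edge by edge along the genealogical tree, conditionally on $\cF^R_\infty$ the angular coordinates are obtained from independent spherical Brownian motions on $\bb{S}^{d-1}$ run with ($\cF^R_\infty$-measurable, hence deterministic) time changes. Since $T$ restricts to an isometry of $\bb{S}^{d-1}$ fixing the spherical position of the spine at time $s$, and spherical Brownian motion is invariant under isometries of the sphere, applying $T$ to the sub-tree rooted at the spine at time $s$ leaves the conditional law of the angular coordinates after time $s$ unchanged; combined with the fact that $T$ alters neither $\cF^R_\infty$ nor $\cF^\Theta_s$, this yields the desired conditional equality in law. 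Everything else in the argument is the same bookkeeping as in the case $d=2$.
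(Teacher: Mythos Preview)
Your proposal is correct and follows essentially the same approach as the paper: the same reflection $T$ acting as a reflection in the line $\R\, p_\Pi(z)$ inside $\Pi$ and as the identity on $\Pi^\perp$, the same observation that $T$ is an isometry preserving all radial parts (hence the selection, tree, and spine), and the same angle identity $\arg_\Pi(Tx)=2\arg_\Pi(z)-\arg_\Pi(x)$ leading to the martingale equation. The one notable difference is that you go further than the paper in justifying the ``delicate step'' --- the conditional equality in law given $\cG_s$, which contains $\cF^R_\infty$ --- by invoking the edge-by-edge skew-product decomposition; the paper simply asserts this step. Your justification is sound, but it leans on the skew-product machinery from the \emph{first} proof of Proposition~\ref{P:theta_conv}, which slightly undercuts the paper's stated motivation that this second proof avoids stochastic calculus and is therefore more robust.
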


For $z \in \R^d$, let $T_z(x)$ be defined by
\[
 T_z(x) = 
  x - 2\big(p_\Pi(x) - \< p_\Pi(x), z'\>z'\big), x \in \R^d,
\]
where $z' = p_\Pi(z) / \| p_\Pi(z) \|$. More descriptively, if $x = u + v$ where $u \in \Pi$ and $v $ is orthogonal to $\Pi$, then $T_z(x) = T_z(u) + T_z(v)$, where $T_z(v) = v$ and $T_z(u)$ is the reflection of $u$ in the line $\R P_{\Pi}(z)$ within the plane $\Pi$. As before, applying this transformation to each descendant of a particle located at $z$ at time $s$ yields a transformation $T$ of the branching Brownian motion, which leaves the modulus of particles $\|T(\bar X_i(t)) \| = \|\bar X_i(t)\|$ unchanged, and leaves the law of branching Brownian motion also unchanged. But the choice of $T$ gives $\arg_\Pi(T(x)) = 2\theta_\Pi(z) - \arg_\Pi(x)$ if $x$ descends from $z$. Thus
\[
 \Ex[\arg_{\Pi}(X_{*}(t)) - \arg_{\Pi}(X_{*}(s))| \mathcal G_s] = \Ex[\arg_{\Pi}(X_{*}(s)) - \arg_{\Pi}(X_{*}(t))| \mathcal G_s] = 0,
\]
as above. This concludes the proof of the lemma.
\qed

\medskip We are now ready to conclude the second proof of Proposition \ref{P:theta_conv}. It suffices to prove that $\theta_*(t)$ converges as $t \to \infty$. We can assume without loss of generality that $d=2$, as it suffices to show that $\arg_\Pi (X_*(t))$ converges as $t \to \infty$ for any fixed arbitrary two-dimensional subspace $\Pi$. Thus we will assume $d=2$.

Let $t > 0$ and set $s = \lceil t \rceil - 1$. Define $\rho_*(t) = \sup_{u \in [s,t]} \|X_{*}(u) - X_{*}(s)\|$ and define a stopping time $T$ to be the first time $t$ such that $\rho_*(t) \geq R_{*}(s)$. Let $\theta^T_*(t) = \theta_*(t\wedge T)$ be the martingale $\theta_*(t)$ stopped at $T$. The reason for stopping at $T$ is to ensure a bound similar to \eqref{E:trig} holds for $\theta_*^T(t)$. The precise bound is
\begin{equation}\label{E:step_bound}
 |\theta^T_*(s + 1) - \theta^T_*(s)| \leq \frac{\pi \rho_*(s+1)}{2 R_{*}(s)} \leq \frac\pi2.
\end{equation}
Since $\theta^T$ is a martingale, 
\begin{align}
 \Ex[\theta^T_*(t)^2] & = \sum_{s = 0}^{t-1} \Ex[(\theta^T_*(s + 1) - \theta^T_*(s))^2]   \le \sum_{s = 0}^{t-1}     \frac{\pi^2}4 \E\left[1\wedge \frac{ \rho_*(s+1)^2}{R_{*}(s)^2} \right]  
 \label{ubTheta}
\end{align}
We observe that $R_*(s) \ge  \| X_N(s) \|$ and proceed to bound from above $\|X_N(s)\|$ stochastically. 
Using the coupling used in the proof of Lemma \ref{L:modulus_R}, we have that $\|X_N(t)\|$ dominates the minimum at time $t$ of a standard one-dimensional Brunet--Derrida system started from $N$ particles all at the origin. In turn, this dominates $S(t) = Z(1) + \ldots + Z(t)$ where $Z(i)$ are independent and identically distributed as the position of the minimum at time $1$ of a one-dimensional Brunet--Derrida system started from $N$ particles all at the origin by the monotone coupling of Lemma \ref{L:monotone}. 

Let $m =\Ex[Z(1)] $, and note that for $N>1$, $m>0$ for the same reason $v_N > 0$ in Lemma \ref{L:1D_speed}. Furthermore, $Z(1)$ is the minimum of a finite number of Brownian motions at time 1 and hence $\Ex[e^{-\lam Z(1)}] < \infty$ for all $\lam \geq 0$ and so $\psi(\lam) = \log \Ex[e^{-\lam Z(1)}]$ is well-defined. Then for any $\lam \geq 0$,
\begin{align*}
 \textstyle \P\left(S(t) \leq \frac12mt\right) & \leq \P\left(e^{-\lam S(t)} \geq e^{-\frac12 \lam mt}\right) \\
 & \leq e^{\frac12 \lam mt}\Ex[e^{-\lam S(t)}]  \leq \exp(tf(\lam)),
\end{align*}
where $f(\lam) = \frac12 \lam m + \psi(\lam)$. We note that $f(0) = 0$ and $f'(0) = \psi'(0) + \frac12 m = -\zeta m$ and that for $\lam$ sufficiently small, $f(\lam) \leq \frac12 \lam f'(0)$. Therefore, 
\[
 \textstyle \P(S(t) \leq \frac12 mt) \leq \exp(-\frac14 \lam mt),
\]
Therefore, by Jensen's inequality and since $x \mapsto x \wedge 1$ is concave,
\begin{align*}
\E\left[ 1 \wedge \frac{\rho_*(s+1)^2}{\|X_N(s)\|^2} \right] & \le \E\left[\left( 1\wedge  \frac{4\rho_*(s+1)^2}{m^2 s^2} \right) \bo 1 \{ \|X_N(s)\| \ge ms /2\} \right] + \P( \|X_N(s)\| \le ms /2 )\\
& \le 1\wedge \frac{4\E[\rho_*(s+1)^2]}{m^2 s^2} + e^{-cs} 
\end{align*}
It is not hard to see that there exists $C_1 > 0$ depending on $N$ but not $s$ such that $\E[\rho_*(s+1)^2] \leq C_1$. Therefore, plugging into \eqref{ubTheta} we see that 
\[
 \E[\theta^T_*(t)^2] \le C_2
\]
for some $C_2 > 0$ and so $(\theta_*^T(t), t\ge 0)$ is a martingale bounded in $L^2$, and so converges almost surely. 

Obviously, this implies convergence of $\theta_*$ almost surely on the event $\{T=\infty\}$. Therefore it suffices to check that that almost surely $\rho_*(t) \geq R_{*}(s)$ eventually never happens. But note that since $\E[\rho_*(t)^2] \le C_1 < \infty$ it follows from Markov's inequality and the Borel--Cantelli lemma that $\rho_*(t) < v_N (\lceil t \rceil - 1) /2$ for all $t$ sufficiently large, and hence $\rho_*(t) \le R_*(s)$ for all $t$ sufficiently large by Lemma \ref{L:modulus_R}. Thus $\theta_*(t)$ converges almost surely as $t \to \infty$.
\end{proof}

\begin{rmk}\label{R:gen}
In this paper we have concerned ourselves for simplicity with branching Brownian motion with selection. However, there are a variety of possible alternatives: for instance, initially, Brunet and Derrida considered a system where branching occurs at discrete time steps $t = 0,1, \ldots$, and at each $t$, each particle branches into two (or possibly even more) individuals,  and the displacement follows a random walk with a given distribution. Yet another alternative, taken up by Durrett and Remenik, is to have particles branch at rate 1 in continuous time. 

As is plain from the above proof, Theorem \ref{T:modulus} remains true in each of these cases, under the assumption that the displacement of particles is rotationally symmetric and second moment on the random walk jumps.
\end{rmk}

\section{Proof of Proposition \ref{P:min_upper} }

Consider a standard one-dimensional Brunet--Derrida particle system with $N$ particles, started from an initial configuration satisfying \eqref{E:init_cond}. For ease of notation, we will assume without loss of generality (since the system is translation invariant) that $x = 0$. 

The key idea of the proof is to compare the Brunet--Derrida system to the free branching Brownian motion where killing occurs at a linear boundary. This is the idea which lies behind papers such as \cite{BBS}, which used a wall of velocity $\sqrt{2 - 2 \pi^2 / (\log N)^2}$ in first approximation. We shall use the same speed for our wall, which we will call the \emph{right wall}. More precisely, we define $L = (\log N)/\sqrt2$, for some $\nu > 1$, $\sqrt \e = \pi / L$ and $\mu^2 = 2 - \e$ and consider a moving linear boundary $(L + \mu t, t \geq 0)$.

There is a natural coupling to a free branching Brownian motion in $\R$ with particles $\bar X_i(t), 1\le i \le \bar N(t)$, ordered in the usual way right to left, obtained by ignoring any particle with index greater than $N$ and their descendants. Note the key property of this coupling that for each $1 \le i \le N$, $X_i(t) \le \bar X_i(t)$, with probability one. Therefore, under this coupling,
\begin{equation}\label{E:minfree}
 \P(X_N(t) \geq \mu t) \leq \P(\bar X_N(t) \geq \mu t). 
\end{equation}

We note here that our initial condition \eqref{E:init_cond} allows us to prove the Proposition up to a constant (that does not depend on $N$) shift of the system. In other words, for any fixed $\zeta > 0$, it suffices to show
\[
 \P \left( \sup_t \left\{ X_N(t) - \mu t \right\} \geq \zeta \right) \to 0,
\]
since
\[
 \sum_{i = 1}^N e^{\sqrt2 (X_i(0) + \zeta)} = e^\zeta \sum_{i = 1}^N e^{\sqrt2 X_i(0)} \leq e^\zeta N^\delta,
\]
which also satisfies \eqref{E:init_cond} for $N$ sufficiently large.

Let $T = c_\delta(\log N)^3$ where $c_\delta > 0$ is a small constant depending only on $\delta$ which we will fix later on. The idea is that $c_\delta$ will be small enough so that the event $V_t$ that no particle ever hits $L + \mu t$ up to time $t$, has high probability for any $t \le T$. (See Lemma \ref{L:kill_right}).

In view of this, let $I(t) \subset \{1, \ldots, \bar N(t)\}$ denote the index set of particles that never touch position $L + \mu s$ for any $s \leq t$. This corresponds to killing particles once they hit this position. Note that $V_t = \{\# I(t) = \bar N(t)\}$. Let
\[
 W_t \defeq \sum_{i \in I(t)} \bo 1 \{\bar X_i(t) \ge \mu t\}
\]
be the number of particles of the free branching Brownian motion with killing at $L + \mu t$, $t \le T$, which are greater or equal to $\mu t$.
Then by \eqref{E:minfree}, we get
\begin{equation}\label{E:minfreekill}
 \P(X_N(t) \geq \mu t) \le \P(W_t \ge N) + \P(V_t^\complement) \le \frac{\Ex[W_t]}{N} + \P(V_t^\complement)
\end{equation}
by Markov's inequality.

In order to estimate $W_t$ we consider an additional wall (which we call the \emph{left wall}) which also moves at velocity $\mu$, and starts at position $0$. We will treat separately the particles in $I(t)$ that hit the left wall and those that do not. More precisely, let $J(t) \subset I(t)$ denote the index of particles that never touch position $L + \mu s$ or $\mu s$ for any $s \le t$. Thus the particles in $J(t)$ are killed when they hit either of two walls (the left and the right one) which both move at velocity $\mu$, and start at position $0$ and $L$. 

Let
\begin{align*}
 W_1(t) & = \sum_{i \in J(t)} \bo 1 \{\bar X_i(t) \ge \mu t \}, \\
 W_2(t) & = \sum_{i \in K(t)} \bo 1 \{\bar X_i(t) \ge \mu t\},
\end{align*}
where $K(t) \defeq I(t) \setminus J(t)$, so that
\[
 W_t = W_1(t) + W_2(t).
\]

\begin{lem} \label{L:W1}
There exists a universal constant $C > 0$ such that for any $t \le T$,
\[
  \Ex[W_1(t)] \le \sum_{i : \bar X_i(0) > 0} C e^{\mu \bar X_i(0)}.
\]
\end{lem}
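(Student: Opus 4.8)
The plan is to estimate $\Ex[W_1(t)]$ by a many-to-one computation (Lemma \ref{L:many_to_one}) combined with an exponential change of measure (Girsanov / Cameron--Martin shift) that turns branching Brownian motion killed on the two parallel moving walls into a computation about a single Brownian bridge-like path confined to a strip. Since $W_1(t)$ counts particles in $J(t)$ — those that avoid both the left wall $\mu s$ and the right wall $L+\mu s$ — and that additionally satisfy $\bar X_i(t)\ge \mu t$, the many-to-one lemma gives
\[
 \Ex[W_1(t)] = e^{t}\,\Ex_0\!\left[\bo 1\{ \mu s < B_s + \bar X_{I(s)}(0) < L + \mu s \ \forall s\le t\}\,\bo 1\{B_t + \bar X_{I(t)}(0)\ge \mu t\}\right],
\]
where the expectation is over a single Brownian motion started appropriately. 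More carefully, because the underlying free process started from several particles, I would first decompose over the initial particle $\bar X_i(0)>0$ from which a counted particle descends, so it suffices to bound the contribution of each such initial particle separately and sum; this is exactly the form of the claimed bound $\sum_{i:\bar X_i(0)>0} Ce^{\mu \bar X_i(0)}$.

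For a single starting point $y = \bar X_i(0) > 0$, the quantity to bound is $e^t\,\P_y(\mu s < B_s < L+\mu s\ \forall s\le t,\ B_t \ge \mu t)$. First I would remove the drift: writing $\tilde B_s = B_s - \mu s$, this equals $e^t\,\P_{y}\big(0 < \tilde B_s < L \ \forall s \le t,\ \tilde B_t \ge 0\big)$ for a Brownian motion with drift $-\mu$ started at $y$. Now apply Girsanov to kill the drift: the Radon--Nikodym derivative of the drift-$(-\mu)$ law with respect to standard BM is $\exp(-\mu(\tilde B_t - y) - \tfrac12\mu^2 t)$. Since on the event $\tilde B_t \ge 0$ we have $-\mu \tilde B_t \le 0$, we get
\[
 e^t\,\P_y\big(0<\tilde B_s<L\ \forall s\le t,\ \tilde B_t\ge 0\big) \le e^{t}\,e^{\mu y - \frac12\mu^2 t}\,\P_y^{\mathrm{BM}}\big(0<B_s<L\ \forall s\le t\big).
\]
Because $\mu^2 = 2-\e$ we have $e^{t - \mu^2 t/2} = e^{\e t/2}$, so the prefactor is $e^{\mu y} e^{\e t /2}$ times the probability that a standard BM stays in the strip $(0,L)$ up to time $t$.

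The heart of the matter is then the strip-confinement probability. By the classical eigenfunction expansion (reflection/Fourier series for Brownian motion in an interval), $\P_y^{\mathrm{BM}}(B_s\in(0,L)\ \forall s\le t)$ is bounded by a constant times $e^{-\pi^2 t/(2L^2)}\sin(\pi y/L)$ plus exponentially smaller terms, or more crudely by $C e^{-\pi^2 t /(2L^2)}$ uniformly in $y\in(0,L)$ — and I only need the uniform bound $\le C e^{-\pi^2 t/(2L^2)}$, valid for $t$ at least of order $L^2$, which holds since $t\le T = c_\delta(\log N)^3$ while $L^2 = (\log N)^2/2$ so indeed $t \gg L^2$. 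Since $\sqrt\e = \pi/L$, i.e. $\e = \pi^2/L^2$, the two exponential rates match exactly: $e^{\e t/2}\cdot e^{-\pi^2 t/(2L^2)} = 1$. Hence the whole expression collapses to at most $Ce^{\mu y}$, which summed over initial particles $i$ with $\bar X_i(0)>0$ yields the lemma with a universal $C$.

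The main obstacle is making the strip-confinement estimate clean and uniform: one must be careful that the bound $\P_y^{\mathrm{BM}}(\text{stay in }(0,L)) \le Ce^{-\pi^2 t/(2L^2)}$ holds \emph{uniformly in the starting point} $y$ (the leading eigenfunction $\sin(\pi y/L)$ is bounded by $1$, so this is fine, but one needs $t/L^2$ bounded below to absorb the subleading eigenvalues into the universal constant — which is exactly where $t\le c_\delta (\log N)^3$ and $L = (\log N)/\sqrt 2$ are used, with room to spare). A secondary point to handle carefully is the decomposition over ancestral particles: a particle counted in $W_1(t)$ descends from exactly one time-$0$ particle, and the many-to-one lemma applied to the killed (sub)process started from that particle gives precisely the per-particle term; summing is then immediate and the left wall plays no role beyond ensuring we are in the strip $(0,L)$ rather than just the half-line, which is what makes the confinement gain (rather than mere one-sided decay) available.
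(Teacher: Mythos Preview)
Your approach is correct and is in fact more self-contained than the paper's, which simply invokes a lemma of Maillard (Lemma~5.4 in \cite{Maillard}) asserting directly that the expected number of surviving descendants in $[r+\mu t, L+\mu t]$ of a single particle started at $x>0$ is at most $Ce^{\mu(x-r)}$, and then takes $r\to 0$ and sums over initial particles. Your many-to-one / Girsanov / strip-eigenvalue computation is precisely the standard way to establish that Maillard-type bound, so the two arguments are really the same computation, with yours spelled out explicitly.

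One logical slip to fix: you write that the uniform bound $\P_y^{\mathrm{BM}}(\text{stay in }(0,L))\le Ce^{-\pi^2 t/(2L^2)}$ requires $t/L^2$ bounded below, and then justify this by ``$t\le T=c_\delta(\log N)^3$ while $L^2=(\log N)^2/2$, so $t\gg L^2$''. But the lemma is stated for \emph{all} $t\le T$, and an upper bound on $t$ gives no lower bound on $t/L^2$. The repair is immediate: for $t\le L^2$ use the trivial bounds $\P_y^{\mathrm{BM}}(\cdot)\le 1$ and $e^{\e t/2}\le e^{\pi^2/2}$, and for $t\ge L^2$ use the eigenfunction expansion as you describe; the maximum of the two constants gives your universal $C$. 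Also note that the indicator $\{\bar X_i(t)\ge \mu t\}$ is already implied by membership in $J(t)$, so your constraint $\tilde B_t\ge 0$ is automatic from the strip condition; this does not affect the argument since you only use $\tilde B_t\ge 0$ to drop $e^{-\mu\tilde B_t}\le 1$.
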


\begin{proof}
We apply a result due to Maillard (second part of Lemma 5.4 in \cite{Maillard}). 
Let $r \in (0,L)$ and assume that initially there is one particle at $x > 0$. Then the number of descendants $N_x(t)$ of that particle that do not hit either  the left or right wall, and which lie in $[r+\mu t , L + \mu t]$, satisfies 
\[
 \Ex[N_x(t)] \leq  C e^{\mu (x - r)},
\]
for some universal constant $C>0$. Letting $r\to 0$ by monotone convergence theorem and summing over all initial positions $x>0$ of particles gives the result.
\end{proof}

It remains to treat particles that do hit the left wall. Let $\Delta_t$ be the number of particles that are killed on the left wall if we kill all particles that hit this wall.

\begin{lem}\label{L:W2Delta}
Given $\Delta_t$, 
\[
 \Ex[W_2(t) | \Delta_t] \le e^{\frac12 \e t} \Delta_t.
\]
\end{lem}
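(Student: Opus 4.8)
The plan is to decompose the set $K(t) = I(t)\setminus J(t)$ according to when each particle's ancestral line first touches the left wall, and then to use the many-to-one lemma together with a comparison against the right wall alone. First I would note that every particle counted in $W_2(t)$ has an ancestor that hit the left wall $\{\mu s : s\le t\}$ at some time $\sigma < t$; let me group these particles by considering, for each time $\sigma$, the collection of particles whose ancestral path is at position $\mu\sigma$ at time $\sigma$ for the first time. By the strong branching property applied at the (stopping) times at which ancestral lines meet the left wall, the descendants of such a particle after time $\sigma$ evolve as an independent branching Brownian motion started from $\mu\sigma$, subject only to the right-wall killing (the left-wall constraint is automatically satisfied going forward for these descendants, since we are counting the ones that stay above $\mu t \ge \mu\sigma$... more precisely we just drop the left-wall constraint, which only increases the count).

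The key computation is then a one-particle estimate: if a branching Brownian motion starts from a single particle at the moving boundary position $\mu\sigma$ at time $\sigma$, the expected number of descendants at time $t$ that lie above $\mu t$ — ignoring all walls — is, by Lemma \ref{L:many_to_one}, equal to $e^{t-\sigma}\,\P(B_{t-\sigma} \ge \mu(t-\sigma))$ where $B$ is a standard Brownian motion started from $0$. Writing $\mu^2 = 2-\e$, the Gaussian tail gives $\P(B_{t-\sigma}\ge \mu(t-\sigma)) \le \tfrac12 e^{-\mu^2(t-\sigma)/2} = \tfrac12 e^{-(2-\e)(t-\sigma)/2}$, so the expected contribution of one such particle is at most $\tfrac12 e^{(t-\sigma)}e^{-(2-\e)(t-\sigma)/2} = \tfrac12 e^{\e(t-\sigma)/2} \le \tfrac12 e^{\e t/2}$, uniformly in $\sigma \in [0,t]$. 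Summing over all the particles whose ancestral lines hit the left wall — there are exactly $\Delta_t$ of them by definition — and taking conditional expectation given $\Delta_t$ and the times and configuration at which these hits occur, yields $\Ex[W_2(t)\mid \Delta_t] \le \tfrac12 e^{\e t/2}\,\Delta_t \le e^{\frac12\e t}\Delta_t$, as claimed. (One should be slightly careful that $\Delta_t$ counts particles killed on the left wall, i.e. the distinct ancestral lines hitting it; since each such killed particle is counted once and gives rise to one independent sub-branching-BM, the bound is exactly a sum of $\Delta_t$ i.i.d.-in-law contributions each bounded by $\tfrac12 e^{\e t/2}$ in expectation.)

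The main obstacle I anticipate is bookkeeping rather than analysis: making precise the claim that the particles in $K(t)$ above $\mu t$ can be partitioned into groups, each group being the (right-wall-killed, left-wall-freed) descendancy of a single particle freshly born on the left wall, without double-counting and while respecting the filtration at the hitting times. This is handled cleanly by the strong Markov / strong branching property at the successive left-wall hitting times, but it requires stating that $\Delta_t$ together with the positions $\{\mu\sigma_j\}$ and the branching structure up to those times is the correct conditioning $\sigma$-algebra, and that conditionally on it the forward evolutions are independent branching Brownian motions. Once that is set up, the Gaussian tail bound above is immediate and the proof is complete.
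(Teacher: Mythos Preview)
Your proposal is correct and follows essentially the same route as the paper: decompose $K(t)$ according to the first hitting time $\sigma$ of the left wall, apply the branching property there, drop both walls going forward, and use the many-to-one lemma to bound the contribution of each such particle by $e^{t-\sigma}\P(B_{t-\sigma}\ge\mu(t-\sigma))$. The only cosmetic difference is that the paper bounds this quantity by first checking that $u\mapsto e^{u}\P(B_u\ge\mu u)$ is nondecreasing (so the worst case is $\sigma=0$) and then applying the Gaussian tail at $u=t$, whereas you apply the Gaussian tail directly at each $u=t-\sigma$ to get $\tfrac12 e^{\e(t-\sigma)/2}\le \tfrac12 e^{\e t/2}$; both reach the same bound.
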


\begin{proof}
Let $\mathcal F_t = \sigma(\Delta_s, s \le t)$. For each particle killed on the left wall at some time $s\le t$, the conditional expectation, given $\mathcal F_t$, of the number of descendants at time $t$ that are greater or equal to $\mu t$ is simply, by translation invariance, and the many-to-one lemma,
\begin{equation}\label{E:cond_exp}
e^{t-s} \P(B_{t-s} \ge \mu (t-s)).
\end{equation}
 Now, let 
\[
 f(t) = e^t \int_{\mu t}^{\infty} e^{-\frac{x^2}{2t}} \frac{dx}{\sqrt{2\pi t}}.
\] 
A quick calculation shows that since $\mu < \sqrt{2}$, $f'(t) \ge 0$ so \eqref{E:cond_exp} is maximised at $s=0$. Thus, summing over all the times $s$ at which some particle dies touching the left wall (thereby increasing $\Delta_s$ by one), yields
\[
\Ex \left[ \left. \sum_{i \in K(t)} \bo 1 \{\bar X_i(t) \ge \mu t\} \right| \mathcal F_t \right] \le e^t\P(B_t \ge \mu t) \Delta_t.
\] 
Since $\Delta_t$ is $\mathcal F_t$-measurable, the result follows, after noticing that $e^t \P(B_t \ge \mu t) \le e^t e^{ - \frac12 \mu^2t } = e^{\frac12 \e t} $ by well known bounds on the normal distribution tail.
\end{proof}

Hence we have reduced the problem to estimating from above $\Ex[\Delta_t]$. To this end, we will distinguish between those that started at positive positions and those at negative positions, respectively $K_+(t)$ and $K_-(t)$. Call $\Delta_+(t)$ and $\Delta_-(t)$ the corresponding number of particles killed at the left wall.

\begin{lem}\label{L:Delta+}
\[
 \Ex[\Delta_+(t)] \le C e t \sum_{i : \bar X_i(0) > 0} e^{\mu \bar X_i(0)}
\]
\end{lem}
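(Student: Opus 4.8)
The plan is to estimate $\Ex[\Delta_+(t)]$, the expected number of particles killed on the left wall among those starting at positive positions, by a first-moment computation via the many-to-one lemma (Lemma \ref{L:many_to_one}), combined with a hitting-time estimate for a single Brownian motion started at $x>0$ and killed at a wall of speed $\mu$ starting at $0$ (the left wall). The key observation is that if we track the genealogical paths of all particles that hit the left wall, each such path corresponds — at the moment of death — to a Brownian path that was at position $0$ relative to the moving wall, having started at $\bar X_i(0)>0$ relative to it. By the many-to-one lemma applied at the (random, a.s. finite after truncating at time $t$) time the ancestral path first hits the left wall, the expected number of killed descendants of a particle initially at $x$ is at most $e^{\sigma} $ averaged against the law of the hitting time $\sigma\le t$ of a Brownian motion (with drift $-\mu$, started at $x$) of level $0$.

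First I would set up the single-particle computation: let $B$ be a standard Brownian motion, and for a particle started at $x>0$ let $\sigma_x = \inf\{s : B_s - \mu s = -x\}$ be the first time it crosses the left wall. By the many-to-one lemma, the expected number of descendants of this particle killed on the left wall by time $t$ equals $\Ex[e^{\sigma_x} \bo 1\{\sigma_x \le t\}]$. Since $\sigma_x\le t$, this is at most $e^{t}\,\P(\sigma_x \le t)$ — but that bound is too lossy; instead I would use the exact density of $\sigma_x$ (the inverse Gaussian / first-passage density for Brownian motion with drift): $\P(\sigma_x \in ds) = \frac{x}{\sqrt{2\pi s^3}} \exp\!\big(-\frac{(x-\mu s)^2}{2s}\big)\,ds$. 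Then $\Ex[e^{\sigma_x}\bo 1\{\sigma_x\le t\}] = \int_0^t e^{s}\frac{x}{\sqrt{2\pi s^3}} e^{-\frac{(x-\mu s)^2}{2s}}\,ds$. Expanding the exponent, $s - \frac{(x-\mu s)^2}{2s} = s - \frac{x^2}{2s} + \mu x - \frac{\mu^2 s}{2} = \mu x - \frac{x^2}{2s} - \frac{\e s}{2}$, using $\mu^2 = 2-\e$. Hence the integral equals $e^{\mu x}\int_0^t \frac{x}{\sqrt{2\pi s^3}} e^{-\frac{x^2}{2s} - \frac{\e s}{2}}\,ds \le e^{\mu x}\int_0^t \frac{x}{\sqrt{2\pi s^3}} e^{-\frac{x^2}{2s}}\,ds \le e^{\mu x}$, since $\int_0^\infty \frac{x}{\sqrt{2\pi s^3}} e^{-\frac{x^2}{2s}}\,ds = 1$ is the total mass of the first-passage law of driftless Brownian motion. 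This already gives $\le e^{\mu x}$, without the factor $Cet$. To recover the stated bound with the extra $Cet$ — which is the form needed downstream — I suspect the authors are being deliberately crude (the $Cet$ factor is a harmless overestimate, and the constant $C$ and the linear-in-$t$ slack absorb the possibility that a killed particle's ancestor was itself the child of an earlier branching, i.e. they may be bounding using $e^{\sigma_x}\bo 1\{\sigma_x\le t\} \le et$ on the relevant range only after a weaker $L^1$ estimate, or accounting for the branching that happens along the path before hitting). Concretely I would just write $\Ex[e^{\sigma_x}\bo 1\{\sigma_x\le t\}] \le Ce^{\mu x}$ and note $C\le Cet$ trivially for $t\ge 1$; more honestly, if one does \emph{not} use the drift term (e.g. treats the left wall as stationary at height $\mu s$ but bounds $e^s$ by the running integral only up to time $t$), one picks up exactly a factor comparable to $t$. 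I would present whichever route the authors' earlier lemmas make cleanest — most likely: bound $e^{\sigma_x}\bo 1\{\sigma_x \le t\}$ using that on $\{\sigma_x\le t\}$ the contribution of each branching along the spine up to time $\sigma_x$ is captured, giving $\Ex[\Delta_+(t) \mid \text{start at } x] \le Cet\, e^{\mu x}$, then sum over all initial particles $i$ with $\bar X_i(0)>0$.

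The final step is summation: by linearity, $\Ex[\Delta_+(t)] = \sum_{i:\bar X_i(0)>0} \Ex[(\text{number of descendants of particle }i\text{ killed on left wall by time }t)] \le \sum_{i:\bar X_i(0)>0} Cet\, e^{\mu \bar X_i(0)}$, which is the claim. The main obstacle — or rather the only subtle point — is getting the hitting-time first-moment estimate in exactly the form $Cet\,e^{\mu x}$ rather than the sharper $Ce^{\mu x}$; the factor $et$ is not needed for sharpness here but matches how $\Delta_+(t)$ will later be fed (through Lemma \ref{L:W2Delta}, which contributes $e^{\e t/2}$) into the bound on $\Ex[W_t]$, and since $t = c_\delta(\log N)^3$ with $c_\delta$ small, $et = O((\log N)^3)$ is polylogarithmic and harmless against the exponential gain $N^{-\delta'}$ coming from $\sum e^{\mu \bar X_i(0)} \le \sum e^{\sqrt 2 \bar X_i(0)} \le N^\delta$ together with $e^{\e t/2}$ being sub-polynomial for the right choice of $c_\delta$. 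So the proof is essentially a one-line many-to-one computation plus the inverse-Gaussian density, and I would not belabour the constant.
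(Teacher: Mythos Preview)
Your many-to-one approach is natural, but it contains a sign error that undoes the argument. Since $\mu^2 = 2 - \e$ with $\e > 0$ (the left wall is \emph{sub}critical), one has $s - \tfrac{\mu^2 s}{2} = +\tfrac{\e s}{2}$, not $-\tfrac{\e s}{2}$. Hence
\[
\Ex\!\big[e^{\sigma_x}\bo 1\{\sigma_x\le t\}\big] \;=\; e^{\mu x}\int_0^t \frac{x}{\sqrt{2\pi s^3}}\, e^{-x^2/(2s)\,+\,\e s/2}\,ds,
\]
and the integrand is \emph{not} dominated by the driftless first-passage density; the best you get this way is $e^{\mu x}\, e^{\e t/2}$. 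That matches the bound of Lemma~\ref{L:Delta-} for $\Delta_-(t)$ and would still suffice downstream, but it does \emph{not} prove the stated inequality $Cet\, e^{\mu x}$: for $t = c_\delta(\log N)^3$ one has $e^{\e t/2} = N^{\pi^2 c_\delta}$, which is polynomially larger than $Cet$. Your remark that the factor $Cet$ is ``a harmless overestimate'' is therefore based on the sign slip --- in fact the paper's bound is strictly sharper than what a one-wall first-passage computation can deliver.

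The missing ingredient is the \emph{right} wall. A branching Brownian motion killed only at a subcritical left wall is supercritical, so the expected number absorbed genuinely grows like $e^{\e t/2}$; this is exactly what you would see after fixing the sign. The paper's route is different: it discretises time into unit intervals and observes that any particle starting above the left wall that is absorbed on it during $[s,s+1]$ must descend from a particle in $J(s)$, i.e.\ one trapped between the two walls at time $s$. Thus $\Ex[\Delta_+(s+1)-\Delta_+(s)] \le e\,\Ex[W_1(s)]$, and Lemma~\ref{L:W1} --- which relies on Maillard's two-wall estimate to bound $\Ex[W_1(s)]\le C\sum_{i:\bar X_i(0)>0} e^{\mu \bar X_i(0)}$ \emph{uniformly in $s$} --- produces the linear factor $t$ after summing over the $\lceil t\rceil$ intervals. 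Your approach cannot recover this without bringing in the right wall somewhere.
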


\begin{proof}
Any particle that first hits the left wall from the right has to descend from an ancestor $\bar X_i(0)$ at time $0$ with $\bar X_i(0) > 0$. We use the following very crude bound: during times $t$ and $t+1$, given $W_1(t)$,
\begin{equation}\label{increaseDelta}
 \Ex[\Delta_+(t+1) - \Delta_+(t) | W_1(t)] \le e W_1(t).
\end{equation}
This is because the number of particles killed on the left wall during $[t, t+1]$ cannot exceed the total number of descendants of particles $i \in J(t)$ such that $X_i(t) >0$. Since the number of such particles is precisely $W_1(t)$, \eqref{increaseDelta} follows.

Taking expectations in \eqref{increaseDelta} we have by Lemma \ref{L:W1},
\[
 \Ex[\Delta_+(t+1) - \Delta_+(t)] \le e \Ex[W_1(t)] \leq C e \sum_{i : \bar X_i(0) > 0} e^{\mu \bar X_i(0)},
\]
Since $\Delta_s$ is non-decreasing, the lemma follows by summing over the intervals $[0,1], \ldots, [\lfloor t \rfloor, \lceil t \rceil]$.
\end{proof}

We now address $\Delta_-(t)$.
\begin{lem}\label{L:Delta-}
\[
 \Ex[\Delta_-(t)] \leq e^{\frac12 \e t} \sum_{i : \bar X_i(0) < 0} e^{\mu \bar X_i(0)}.
\]
\end{lem}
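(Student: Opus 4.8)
The plan is to bound $\Ex[\Delta_-(t)]$ by controlling the particles that start at negative positions and ever reach the left wall $\mu s$, using the many-to-one lemma together with the reflection-type argument already in place. First I would reduce to a single initial particle: by linearity of expectation it suffices to show that a particle started at $x = \bar X_i(0) < 0$ contributes at most $e^{\frac12 \e t} e^{\mu x}$ to $\Ex[\Delta_-(t)]$, and then sum over all $i$ with $\bar X_i(0) < 0$. So fix one particle at $x < 0$ and let $N_x^-(t)$ denote the number of its descendants that are killed on the left wall $\mu s$ by time $t$ (and that never hit the right wall before).

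Next I would apply the many-to-one lemma (Lemma \ref{L:many_to_one}) to the free branching Brownian motion: for each descendant killed on the left wall at a time $s \le t$, follow the ancestral path; the expected number of such killed descendants is $\Ex[e^{\sigma} ; \sigma \le t]$ where $\sigma$ is the first hitting time of the line $\{\mu u : u \ge 0\}$ by a single Brownian motion $B$ started at $x$ (restricted to paths that stay below the right wall, which only helps the bound). Writing $B_u = x + \beta_u$ with $\beta$ a standard Brownian motion, $\sigma$ is the first time $\beta_u = \mu u - x$, i.e. the hitting time of the moving barrier $\mu u - x$ by $\beta$, equivalently (after the Girsanov tilt that removes the drift $\mu$) a hitting time of level $-x = |x|$. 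Concretely, I would change measure by $\exp(\mu \beta_t - \frac12 \mu^2 t)$ so that under the new measure $\beta$ has drift $\mu$; then $\Ex[e^{\sigma} ; \sigma \le t]$ becomes $\Ex^{\mathrm{new}}[e^{\sigma - \mu \beta_\sigma + \frac12 \mu^2 \sigma} ; \sigma \le t]$, and on $\{\sigma \le t\}$ we have $\beta_\sigma = \mu \sigma - x$, so the integrand equals $e^{\sigma - \mu(\mu\sigma - x) + \frac12\mu^2\sigma} = e^{\mu x} e^{(1 - \frac12\mu^2)\sigma} = e^{\mu x} e^{\frac12 \e \sigma}$ since $1 - \frac12\mu^2 = \frac12(2 - \mu^2) = \frac12\e$. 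Bounding $e^{\frac12\e\sigma} \le e^{\frac12 \e t}$ on $\{\sigma \le t\}$ and using that the new measure is a probability measure gives $\Ex[N_x^-(t)] \le e^{\mu x} e^{\frac12 \e t}$, as desired.

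Finally I would sum over initial particles: $\Ex[\Delta_-(t)] = \sum_{i : \bar X_i(0) < 0} \Ex[N_{\bar X_i(0)}^-(t)] \le e^{\frac12 \e t} \sum_{i : \bar X_i(0) < 0} e^{\mu \bar X_i(0)}$, which is exactly the claimed bound. The main obstacle is making the many-to-one computation for a \emph{killed} branching Brownian motion fully rigorous — one must be careful that restricting to particles that never hit the right wall (and stopping descendants once they hit the left wall) is compatible with the many-to-one formula, which is most cleanly handled by applying Lemma \ref{L:many_to_one} at the stopping time $T \wedge (\text{first left-wall hit})$ with an appropriate functional, or by the explicit bound of Maillard already invoked in Lemma \ref{L:W1}; everything else is the routine optional-stopping/Girsanov identity $1 - \tfrac12\mu^2 = \tfrac12\e$ on the hitting time.
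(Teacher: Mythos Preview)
Your argument is correct and arrives at the same bound, but by a genuinely different route than the paper. The paper proves Lemma~\ref{L:Delta-} via a supermartingale: it passes to the frame moving at speed~$\mu$, introduces an auxiliary killing level at $-A$, and uses that
\[
 M^A_s \;=\; \sum_{i} \bigl(X^*_i(s) + A\bigr)\, e^{\mu (X^*_i(s) + A) - \frac12 \e s}
\]
is a nonnegative supermartingale once particles are stopped at level~$0$ (because $2-\mu^2=\e>0$). Comparing $\Ex[M^A_0]\ge \Ex[M^A_t]$ and retaining only the stopped particles gives $\Ex[\Delta_-^A(t)] \le e^{\frac12\e t}\sum e^{\mu \bar X_i(0)}$, and then $A\to\infty$.

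Your approach instead reduces to a one-particle computation via many-to-one, and then evaluates $\Ex[e^\sigma;\sigma\le t]$ by the Girsanov tilt $e^{\mu\beta_t-\frac12\mu^2 t}$, using the identity $1-\tfrac12\mu^2=\tfrac12\e$ on $\{\sigma\le t\}$. This is arguably more elementary and avoids the auxiliary parameter~$A$ and the limiting step. The paper's choice, on the other hand, is economical in context: exactly the same supermartingale is reused verbatim in the next lemma (Lemma~\ref{L:kill_right}) to bound $\P(V_t^\complement)$, so the two lemmas share a single tool. Your remark that ignoring the right-wall constraint only increases the count is correct and is what makes the one-line many-to-one reduction legitimate; the cleanest way to phrase the application of Lemma~\ref{L:many_to_one} is indeed to freeze particles upon hitting the left wall (so the branching rate drops to~$0$ thereafter) and count those frozen at time~$t$, giving $\Ex[e^{\sigma};\sigma\le t]$.
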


\begin{proof}
Any particle that first hits the left wall from the left has to descend from an ancestor $\bar X_i(0)$ at time $0$ with $\bar X_i(0) < 0$. The total number of such particles up to time $t$, $\Delta_-(t)$, is exactly the number of particles of a branching Brownian motion with drift $-\mu$ that hit level $0$ by time $t$, started from the negative positions in the initial condition. 

Fix some constant $A > 0$, and consider a branching Brownian motion with drift $-\mu$ where every particle is \emph{stopped} upon reaching 0 and \emph{killed} upon reaching $-A$. Initially the starting positions consists precisely of $(\bar X_i(0), 1\le i \le N)$ whenever $\bar X_i(0) < 0$. We call $X^*_i(t)$, $i \in N^*(t)$, the corresponding particle locations. Let $\Delta_-^A(t)$ be the number of particles stopped upon reaching $0$ by time $t$. Now consider the process
\begin{equation}\label{E:supermartingale}
 M^A_s = \sum_{i \in N^*(t)} (X^*_i(s) + A) e^{\mu (X^*_i(t) + A) - \frac12(2 - \mu^2)s}.
\end{equation}
Without stopping particles upon reaching $0$, it is easy to check that $(M^A_s, s \geq 0)$ defines a nonnegative martingale (see e.g. Lemma 2 of \cite{HH}, or Lemma 6 of \cite{BBS})). However, if we stop particles upon reaching $0$, since $2 - \mu^2 = \e > 0$, $M^A_s$ becomes a supermartingale. Therefore
\[
 \sum_{i : \bar X_i(0) < 0} A e^{\mu (\bar X_i(0) + A)} \geq \Ex[M^A_0] \geq \Ex[M^A_t] \geq \Delta_-^A(t) Ae^{\mu A - \frac12\e t}.
\]
So, making the cancellations,
\[
 \Ex \left[ \Delta_-^A(t) \right] \leq e^{\frac12 \e t} \sum_{i : \bar X_i(0) < 0} e^{\mu \bar X_i(0)}.
\]
Letting $A \to \infty$ and using the monotone convergence theorem concludes the proof of Lemma \ref{L:Delta-}.
\end{proof}

With this supermartingale argument, we are also in a position to address $V_t$, the event no particle ever hits $L + \mu t$ up to time $t$.

\begin{lem}\label{L:kill_right}
\[
 \P(V_t^\complement) \leq e^{-\mu L} e^{\frac12 \e t} \sum_{i = 1}^N e^{\mu \bar X_i(0)}.
\]
\end{lem}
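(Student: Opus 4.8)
The plan is to run a one-line additive-martingale argument in the frame moving at speed $\mu$, entirely parallel to the proof of Lemma \ref{L:Delta-} but for the right wall; the simplification compared with the left wall is that a particle which reaches the right wall $L + \mu s$ already carries a contribution of order $e^{\mu L}$ to the martingale, so no auxiliary truncation at $-A$ is needed.

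First I would record the relevant martingale. Since the free branching Brownian motion $(\bar X_i(s))$ branches at rate $1$ and $\mu^2 = 2 - \e$, the space-time function $g(x,s) = e^{\mu x - (2 - \e/2)s}$ solves $\partial_s g + \frac12 \partial_{xx} g + g = 0$; equivalently, by the many-to-one Lemma \ref{L:many_to_one} together with the branching Markov property (cf.\ Lemma 2 of \cite{HH} or Lemma 6 of \cite{BBS}),
\[
 Y_s \defeq \sum_{i \le \bar N(s)} e^{\mu \bar X_i(s) - (2 - \frac12\e)s} = e^{-\frac12 \e s}\sum_{i \le \bar N(s)} e^{\mu(\bar X_i(s) - \mu s)}
\]
is a nonnegative martingale with $Y_0 = \sum_{i=1}^N e^{\mu \bar X_i(0)}$.

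Next, let $\tau = \inf\{ s \ge 0 : \max_{i \le \bar N(s)}(\bar X_i(s) - \mu s) \ge L\}$ be the first time some particle touches the right wall; it is a stopping time of $(\bar\F_s)$ and $V_t^\complement = \{\tau \le t\}$. Optional stopping at the bounded time $\tau \wedge t$ gives $\Ex[Y_{\tau \wedge t}] = Y_0$. On $\{\tau \le t\}$, by path continuity the particle realising the maximum at time $\tau$ is located exactly at $L + \mu\tau$, so it contributes $e^{\mu(L+\mu\tau) - (2 - \frac12\e)\tau} = e^{\mu L - \frac12 \e\tau} \ge e^{\mu L - \frac12 \e t}$ to $Y_\tau$, and since every other term is nonnegative, $Y_\tau \ge e^{\mu L - \frac12\e t}$ there. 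Hence
\[
 \sum_{i=1}^N e^{\mu \bar X_i(0)} = \Ex[Y_{\tau \wedge t}] \ge \Ex\big[Y_\tau \bo 1 \{\tau \le t\}\big] \ge e^{\mu L - \frac12\e t}\,\P(\tau \le t),
\]
and rearranging yields $\P(V_t^\complement) \le e^{-\mu L} e^{\frac12 \e t}\sum_{i=1}^N e^{\mu \bar X_i(0)}$.

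There is essentially no genuine obstacle here: the only points needing care are the exponent bookkeeping (keeping $\mu^2 = 2-\e$ straight throughout) and the justification of optional stopping, which is immediate since $Y$ is a nonnegative martingale and $\tau \wedge t$ is bounded. If one prefers to avoid invoking the additive martingale directly, the same bound follows by applying the many-to-one lemma to the branching Brownian motion in which particles are frozen upon hitting the right wall and estimating the expected number of frozen particles — but the martingale route above is the cleanest.
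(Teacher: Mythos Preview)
Your proof is correct. The exponent bookkeeping is right ($\mu^2/2 + 1 = 2 - \e/2$), optional stopping at the bounded time $\tau\wedge t$ is valid for the nonnegative martingale $Y$, and the lower bound $Y_\tau \ge e^{\mu L - \frac12\e t}$ on $\{\tau\le t\}$ follows exactly as you say.

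Your route differs from the paper's. The paper reuses the supermartingale \eqref{E:supermartingale} from Lemma~\ref{L:Delta-}, i.e.\ the one with the linear weight $(x+A)$ and killing at $-A$, stops particles at $L$, bounds $\Ex[\Delta_L^A(t)]$, and then sends $A\to\infty$. You instead use the plain additive martingale $\sum_i e^{\mu \bar X_i(s) - (\mu^2/2+1)s}$ with optional stopping and no auxiliary parameter. Your observation explaining why this works is exactly the point: at the right wall the exponential weight of the hitting particle is already $e^{\mu L}$, so the extra linear factor $(x+A)$ --- which was essential at the left wall where the exponential contribution collapses to $1$ --- is unnecessary here. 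The paper's approach has the virtue of recycling the same device used for $\Delta_-$, but yours is the more economical argument for this lemma.
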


\begin{proof}
We use the same supermartingale \eqref{E:supermartingale} as in the proof of Lemma \ref{L:Delta-} except we now stop particles upon reaching $L$ in the branching Brownian motion with drift $-\mu$. Particles are still killed at $-A$. Let $\Delta_L^A(t)$ be the number of particles stopped upon reaching $L$ by time $t$. Then arguing as before
\[
 \Ex[\Delta_L^A(t)] (L+A) e^{\mu(L+A) - \frac12 \e t} \leq \sum_{i = 1}^N (\bar X_i(0) + A)e^{\mu(\bar X_i(0) + A)}.
\]
Since $L > X_1(0)$,
\[
 \Ex[\Delta_L^A(t)] \leq e^{\frac12 \e t - \mu L} \sum_{i = 1}^N e^{\mu \bar X_i(0)}.
\]
We note that $\P(V_t^\complement) = \P(\lim_{A \to \infty} \Delta_L^A(t) \geq 1)$ and conclude by Markov's inequality and monotone convergence.
\end{proof}

Putting together Lemmas \ref{L:Delta+} and \ref{L:Delta-}, we get
\begin{align*}
 \Ex[\Delta_t] & \leq  \left( e^{\frac12 \e t} \sum_{i : \bar X_i(0) < 0} e^{\mu \bar X_i(0)} + C e t \sum_{i : \bar X_i(0) > 0} e^{\mu \bar X_i(0)} \right).
\end{align*}
Combining with Lemmas \ref{L:W1} and \ref{L:W2Delta}, this yields
\[
 \Ex[W_t] \leq e^{\e t} \sum_{i : \bar X_i(0) < 0} e^{\mu \bar X_i(0)} + (1 + e t)C e^{\frac12 \e t} \sum_{i : \bar X_i(0) > 0} e^{\mu \bar X_i(0)} 
\]
Note that for $t \le T$,
\[
 e^{\e t} \le \exp(\e c_\delta (\log N)^3) = \exp(2\pi^2 c_\delta \log N) = N^{2\pi^2 c_\delta}.
\]
(Recall the definition of $\eps$ at the beginning of the section.) Therefore,
\[
 \Ex[W_t] \leq N^{2\pi^2 c_\delta} \sum_{i = 1}^N e^{\mu \bar X_i(0)},
\]
and because \eqref{E:init_cond} holds,
\[
 \P(W_t \geq N) \leq \frac{\Ex[W_t]}N \leq N^{-\kappa},
\]
where $\kappa = 1 - 2\pi^2 c_\delta - \delta > 0$ for a small enough choice of $c_\delta$. As for $\P(V_t^\complement)$, since
\[
 \mu L = \log N \sqrt{1 - \frac{\pi^2}{(\log N)^2}} > (1 - \pi^2 c_\delta)\log N,
\]
for all $N$ sufficiently large, by Lemma \ref{L:kill_right},
\[
 \P(V_t^\complement) \leq N^{-\kappa}.
\]
By \eqref{E:minfreekill}, we now have a bound for any fixed time $t \leq T$,
\begin{equation}\label{E:max_free_conc}
 \P(X_N(t) \geq \mu t) \leq \P(\bar X_N(t) \geq \mu t) \leq 2N^{-\kappa}.
\end{equation}

We now extend this bound to hold for all $t \leq T$. Let $t_k = i ( \log N)^{-1}, k = 1, \ldots c_\delta(\log N)^2$, so that $t_k$ forms a regular partition of $[0,T]$ with spacings of size $1 / (\log N)$. During each $[t_k, t_{k+1}]$, it is possible to check that $X_N(t)$ has small fluctuations. The key observation here is that $X_N(t)$ is piecewise Brownian and only jumps to the right -- never to the left. Therefore, during the interval, the minimum cannot travel too far right of $\mu t_{k+1}$ due to the cost of motion to the left.

We now fix some large constant $K > 0$ and define the bad events
\begin{equation}
 B_k = \left\{ \sup_{s \in [t_{k-1}, t_k]} X_N(s) \geq \mu t_k + K \right\},
\end{equation}
and the good events $G_k = \{ X_N(t_k) \leq \mu t_k \}$.

Given the Brunet--Derrida system at time $t_{k-1}$, consider the coupled free branching Brownian motion started from these $N$ particles and for a particle $\bar X_i(t_k)$ at time $t_k$, let $\bar Y_i(s)$ be its ancestor at time $s \leq t$. We see by the observation above, the probability of the event $B_k \cap G_k $ is bounded by the probability of the analogous event for the branching Brownian motion, namely  
\[
\left\{\exists 1\le i \le \bar N(t_k), \sup_{s \in [t_{k-1}, t_k]} \bar Y_i(s) \geq \mu t_k + K , \quad \bar X_i(t_k) \leq \mu t_k \right\}
\]
By a union bound and the many-to-one lemma (Lemma \ref{L:many_to_one}),
\begin{align*}
 \P(B_k \cap G_k)  
 & \leq N e^{t_k - t_{k-1}} \P \left( \sup_{s \in [t_{k-1}, t_k]}  Z(s) - Z(t_k) \geq K \right) \\
 & \leq 2 N e^{(\log N)^{-1}} \P(Z((\log N)^{-1}) \geq K) \\
 & \leq 4 N e^{-\frac12 K^2 \log N}
\end{align*}
where $(Z(u), u \geq 0)$ is a Brownian motion. So for $K > \sqrt {2(1 + \kappa)}$,
\begin{equation}\label{E:fluc_min}
 \P(B_k \cap G_k) \leq 4 N^{-\kappa}.
\end{equation}
We can sum the conclusion of \eqref{E:max_free_conc} and \eqref{E:fluc_min} over all $k$ to show that
\begin{align*}
 \P \left( \sup_{t \leq T} \left\{ X_N(t) - \mu t \right\} \geq K + \mu(\log N)^{-1} \right) 
 & \leq \sum_{k=1}^{c_\delta(\log N)^2} ( \P(B_k \cap G_k) + \P(G_k^\complement)) \\
 & \leq 6 c_\delta (\log N)^2 N^{-\kappa} \to 0.
\end{align*}
\qed

\section{Proof of Theorem \ref{T:MRCA}}

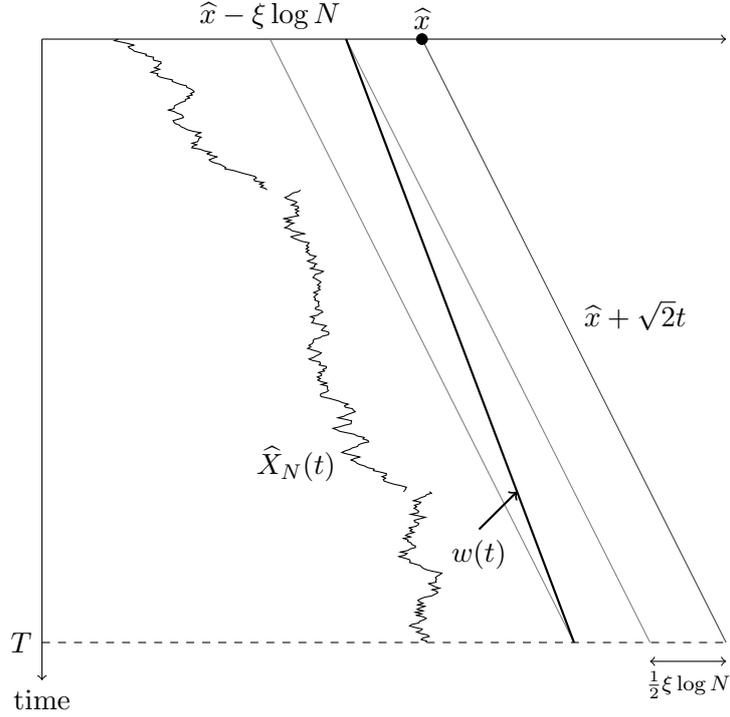
\begin{figure}
\centering 
\begin{tikzpicture}
 \draw [<-] (-2,-0.5) node [below] {time} -- (-2,8);
 \draw [->] (-2,8) -- (7,8);
 \draw [dashed] (-2,0) node [left] {$T$} -- (7,0);
 \draw [darkgray] (3,8) -- (7,0);
 \node [above right] at (5,4) {$\hat x + \sqrt 2 t$};
 \draw [gray] (2,8) -- (6,0);
 \draw [gray] (1,8) -- (5,0);
 \node [above] at (1,8) {$\hat x - \xi \log N$};
 \draw [fill] (3,8) circle [radius=2pt] node [above] {$\hat x$};
 \draw [<->] (6,-0.25) -- (7,-0.25);
 \node [below] at (6.5,-0.25) {$\scriptstyle \frac12 \xi \log N$};
 \draw [thick] (2,8) -- (5,0);
 \draw [thick, ->] (3.75,1.5) -- (4.25,2);
 \node [below] at (3.75,1.5) {$w(t)$};
 \pgfmathsetseed{36};
 \draw [thin] (-1,8)
  \foreach \x in {1,2,...,100} { -- ++(rand*0.1+0.02,-0.02) };
 \draw [thin] (1.4,6)
  \foreach \x in {1,2,...,200} { -- ++(rand*0.1,-0.02) };
 \draw [thin] (3.1,2)
  \foreach \x in {1,2,...,100} { -- ++(rand*0.1,-0.02) };
 \node [above left] at (2,2) {$\hat X_N(t)$};
\end{tikzpicture}
\caption{Diagram reference for proof of Theorem \ref{T:MRCA}.}
\label{F:MRCA}
\end{figure}

Consider a Brunet--Derrida particle system with $N$ particles, started from an initial configuration satisfying \eqref{E:init_cond}, driven by the linear score function $s(x) = \< x, \lam \> = \hat x$. As before, let $T = c_\delta(\log N)^3$ where $c_\delta > 0$ is a small constant depending only on $\delta$ which we will fix later on. Let $\xi > 0$ be small enough that $\delta' \defeq \delta + \sqrt{2} \xi <1$. Note that \eqref{E:init_cond} implies that if $\hat Y_n(t) = \hat X_n(t) -\hat x + \xi \log N$, then
\[
 \sum_{n=1}^N e^{\sqrt{2} \hat Y_n(0)} \le N^{\delta'}
\]
Thus by Proposition \ref{P:min_upper}, with probability tending to $1$, for all $t \leq T$,
\begin{equation}\label{E:MRCA_left_wall}
 \hat X_N(t) \leq \hat x - \xi \log N + \sqrt 2 t.
\end{equation}
On this event,
\begin{equation}\label{w}
 \hat X_N(t) \leq w(t) \defeq \hat x  - \frac12 \xi \log N + \mu' t .
\end{equation}
where
\begin{equation}
\mu' =  \sqrt 2 - \frac{\xi}{2c_\delta(\log N)^2} .
\end{equation}
The function $w(t)$ is a linear boundary which will act as a \emph{killing wall}. Note that
\[
 w(0) = \hat x - \frac12 \xi \log N, \quad w(T) = \hat x - \xi \log N + \sqrt 2T,
\]
and thus by \eqref{w}, if a particle of never hits $w(t)$ and starts to its right (i.e., $\hat X_i(0) \ge w(0)$), then it will survive selection in the Brunet--Derrida system. 

Now, let $Q_{\mu'}(y)$ be the probability that a branching Brownian motion starting from one particle at $y>0$ survives killing at a wall $\mu't$ for all time. Then the probability of a particle at position $y$ or greater to survive in the Brunet--Derrida system until time $T$ is greater than the probability it survives killing at the wall $w(t)$ until time $T$, which is in turn bounded below by $Q_{\mu' }( (\xi/2) \log N)$. 

By Theorem 1 in \cite{BBS2}, we deduce that for $c_\delta < \xi^3/(2 \sqrt 2 \pi^2)$, $Q_{\mu' }( (\xi/2) \log N) \to 1$ as $N \to \infty$, and hence any particle at $x$ has descendants alive at time $T$, as desired. 
\qed

\section{Proof of Theorem \ref{T:sausage}}

Let $H = \{ x \in \R^d: \< x, \lam \> = 0 \}$ be the orthogonal hyperplane to $\lambda$. Recall that when a Brunet--Derrida particle system is driven by a linear $s(x) = \< x, \lam \>$. We have already shown in Lemma \ref{L:diameter} that for any initial condition, if $t > (1 + \kappa) \log N$ for some $\kappa > 0$ and $a > 3\sqrt 2$,
\[
 \lim_{N \to \infty} \P \left( \diam_t \leq a\log N \right) = 1.
\]
On the other hand, by Theorem \ref{T:MRCA}, if \eqref{E:init_cond} initially holds with $x = X_i(0)$ for some $i \geq 2$, both the rightmost and second rightmost particles have descendants alive at time $T$ with high probability, where $T = c_\delta(\log N)^3$, for some $c_\delta > 0$ possibly depending on $\delta$ in \eqref{E:init_cond}.

If \eqref{E:init_cond} initially holds only for $x = X_1(0)$, we observe that the maximum particle at time $u = \log \log N$ fails to branch by time $u$ with probability $\leq 1/(\log N)$. Moreover, by Lemma \ref{L:maximum}, on the event the maximum branches before time $u$, there will be with high probability at least two particles at time $u$ with position $\geq x - 2 \log \log N$. But by introducing an extra $2 \log \log N$ term in \eqref{E:MRCA_left_wall} in the proof of Theorem \ref{T:MRCA}, we see we can apply the conclusion of the theorem to both these particles.

Therefore, at time $u$, two separate particles $X_i(u), X_j(u)$ both have descendants alive at time $T$ with high probability. Let $E$ be this event and call $X_i(T), X_j(T)$ the positions of two arbitrarily chosen descendants of both particles. Let $Y_i(t), Y_j(t)$ denote the positions at time $u \le t \le T$ of the ancestors of $X_i(T)$ and $X_j(T)$. Hence $Y_i(u) = X_i(u)$ and $Y_j(u) = X_j(u)$.

Then note that if $p_H$ is the orthogonal projection onto $H$, $p_H(Y_i)$ and $p_H(Y_j)$ are independent $(d-1)$-dimensional Brownian motions on $H$ on the time interval $[u, T]$ (see the end of the proof of Theorem \ref{T:linear}). Thus
\[
 \diam_t^\perp \succ \|p_H(Y_1(t)) - p_H(Y_2(t)) \|,
\]
on the event $E$, and in particular,
\[
\liminf_{\eta \to 0} \liminf_{N \to \infty} \P \left( \diam_T^\perp \geq \eta (\log N)^{3/2} \right) = 1,
\]
as desired. \qed


\end{document}